\numberwithin{equation}{section}
\numberwithin{figure}{section}
\numberwithin{table}{section}
\theoremstyle{plain}
\newtheorem{thm}{\protect\theoremname}[section]
\theoremstyle{plain}
\newtheorem{prop}[thm]{\protect\propositionname}
\theoremstyle{definition}
\newtheorem{conjecture}[thm]{\protect\conjecturename}
\theoremstyle{plain}
\newtheorem{lem}[thm]{\protect\lemmaname}
\theoremstyle{plain}
\newtheorem{rem}[thm]{\protect\remarkname}
\theoremstyle{definition}
\newtheorem{defn}[thm]{\protect\definitionname}
\providecommand{\definitionname}{Definition}
\providecommand{\lemmaname}{Lemma}
\providecommand{\propositionname}{Proposition}
\providecommand{\remarkname}{Remark}
\providecommand{\theoremname}{Theorem}
\providecommand{\conjecturename}{Conjecture}
\DeclareMathOperator{\Cayley}{Cayley}
\DeclareMathOperator{\id}{id}
\DeclareMathOperator{\mchar}{char}
\DeclareMathOperator{\SL}{SL}
\DeclareMathOperator{\PGL}{PGL}
\DeclareMathOperator{\PSL}{PSL}
\global\long\def\C{\mathbb{C}}%
\global\long\def\Q{\mathbb{Q}}%
\global\long\def\N{\mathbb{N}}%
\global\long\def\Z{\mathbb{Z}}%
\global\long\def\O{\mathcal{O}}%
\global\long\def\F{\mathbb{F}}%
\global\long\def\A{\mathbb{A}}%
\global\long\def\one{\mathds{1}}%
\global\long\def\n#1{\left\lVert #1\right\rVert }%
\global\long\def\Mod#1{\ (\mathrm{mod}\ #1)}%
\begin{document}
\title{
Combinatorics via Closed Orbits: \\ Number Theoretic Ramanujan Graphs are not Unique Neighbor Expanders 
}
\author{
Amitay Kamber
\footnote{
Centre for Mathematical Sciences, Wilberforce Road, Cambridge CB3 0WB, UK.
email: ak2356@dpmms.cam.ac.uk. Research supported by ERC under the European Union's Horizon 2020 research and innovation programme (grant agreement No. 803711)}
\and
Tali Kaufman
\footnote{Department of Computer Science, Bar-Ilan University, Ramat-Gan, 5290002, Israel, email: kaufmant@mit.edu, research supported by ERC.}
}
\maketitle

\begin{abstract}
The question of finding expander graphs with strong vertex expansion properties such as \emph{unique neighbor expansion} and \emph{lossless expansion} is central to computer science. A barrier to constructing these is that strong notions of expansion could not be proven via the spectral expansion paradigm. 

A very symmetric and structured family of optimal spectral expanders (i.e., Ramanujan graphs) was constructed using number theory by Lubotzky, Phillips and Sarnak, and was subsequently generalized by others. We call such graphs \emph{Number Theoretic Ramanujan Graphs}. 
These graphs are not only spectrally optimal, but also posses strong symmetries and rich structure. Thus, it has been widely conjectured that number theoretic Ramanujan graphs are lossless expanders, or at least unique neighbor expanders. 

In this work we disprove this conjecture, by showing that there are number theoretic Ramanujan graphs that are not even unique neighbor expanders. This is done by introducing a new combinatorial paradigm that we term \emph{the closed orbit method}. 

The closed orbit method allows one to construct finite combinatorial objects with extermal substructures. This is done by observing that there exist \emph{infinite} combinatorial structures with extermal substructures, coming from an action of a subgroup of the automorphism group of the structure.
The crux of our idea is a systematic way to construct a finite quotient of the infinite structure containing a \emph{simple} shadow of the infinite substructure, which maintains its extermal combinatorial property. 

Other applications of the method are to the edge expansion of number theoretic Ramanujan graphs and vertex expansion of Ramanujan complexes. Finally, in the field of graph quantum ergodicity we produce number theoretic Ramanujan graphs with an eigenfunction of small support that corresponds to the zero eigenvalue. This again contradicts common expectations. 

The closed orbit method is based on the well-established idea from dynamics and number theory of studying closed orbits of subgroups. The novelty of this work is in exploiting this idea to combinatorial questions, and we hope that it will have other applications in the future.
\end{abstract}

\section{Introduction}

\paragraph{On Ramanujan graphs, and Number Theoretic Ramanujan graphs.} Various combinatorial questions are studied using sparse graphs. Their solution is often based on a spectral analysis of the underlying graph, and in particular on the fact that the given graph is a good expander, meaning that the second eigenvalue of its adjacency matrix is very far from its first eigenvalue (\cite{hoory2006expander}). 
The strongest spectral expansion condition is the Ramanujan property, which says that the 
second largest non-trivial eigenvalue in absolute value of the adjacency operator of a $d$-regular graph is bounded by $2\sqrt{d-1}$. 

An explicit family of Ramanujan graphs, which we call \emph{LPS graphs}, was constructed in the celebrated work of Lubotzky, Phillips, and Sarnak (\cite{lubotzky1988ramanujan}). This construction is based on number theory and in particular on the theory of automorphic forms, using deep results of Deligne and others. There are various possible variations on the construction (e.g., \cite{margulis1988explicit,pizer1990ramanujan}), including the much earlier work of Ihara (\cite{ihara1966discrete}). We will focus on the work of Morgenstern (\cite{morgenstern1994existence}), who gave another such explicit family which we call \emph{Morgenstern graphs} (the essential difference between the two works is replacing the field $\Q$ by $\F_q(t)$, where $\F_q$ is the finite field with $q$ elements). We call the graphs resulting from the different variations \emph{number theoretic graphs}, to distinguish them from other constructions of Ramanujan graphs (e.g., the graphs constructed by \cite{marcus2013interlacing}). The number theoretic Ramanujan graphs have various other wonderful properties -- for example, they are Cayley Graphs and have a very large girth (i.e., the length of the shortest cycle is large).

\subsection{Vertex Expansion and the Spectral Method} 

There are some notoriously hard combinatorial questions about graphs where the spectral theory falls short of proving the desired answer. A notable example is the question of finding a family of explicit $d$-regular graphs which are \emph{lossless-expanders}. 
For $X$ a $d$-regular graph, and a subset $Y$ of the vertices of $X$, we define the \emph{expansion ratio} of $Y$ as $\frac{\left|N(Y)\right|}{\left|Y\right|}$, where $N(Y)$ is the set of neighboring vertices of the set $Y$, which may include vertices from $Y$ itself. This ratio is obviously bounded by $d$. For $d$ large but fixed as the size of the graph grows to infinity, we say that a family of graphs is a family of lossless expanders if there is a constant $\alpha>0$ such that for every set $Y\subset X$ of size $|Y| \le \alpha |X|$, its expansion ratio is $d-o(d)$. There are constructions of graphs satisfying weaker notions (\cite{capalbo2002randomness,alon2002explicit}), but even going beyond expansion ratio $d/2$ is a major open question (\cite{hoory2006expander}).

The best results using the spectral method are due to Kahale (\cite{kahale1995eigenvalues}). He shows that for a Ramanujan graph the expansion ratio of sets of size bounded by $\alpha |X|$ is at least $d/2 -\beta$, where $\beta \to 0$ as $\alpha \to 0$ (see Theorem~\ref{thm:Kahale vertex expansion} for an alternative proof). Kahale also constructs a family of graphs which are almost-Ramanujan, in the sense that their second largest eigenvalue in absolute value is bounded by $2\sqrt{d-1}+o(1)$, having a subset $Y$ of two vertices which has expansion ratio $d/2$. In particular, he shows that the best expansion ratio that it is possible to get solely by using spectral arguments cannot exceed $d/2$ for linear sized sets.

One of the reasons that passing the $d/2$ barrier is important is that graphs with vertex expansion greater than $d/2$ are also \emph{unique neighbor expanders}, for if a set $Y$ has an expansion ratio that is greater than $d/2$, then there exists a vertex that has a unique neighbor in $Y$. Unique neighbor expanders were constructed by Alon and Capalbo (\cite{alon2002explicit}), but the resulting graphs are not lossless expanders. A weaker desired property is \emph{odd neighbor expansion}, which says that there is a vertex that is connected to an odd number of elements in $Y$. We refer to \cite{hoory2006expander} for a discussion of vertex expansion and its applications from different points of view.

Kahale's example has a short cycle of length 4. It is also very far from being a Cayley graph. For graphs with large girth, Kahale actually proved that small sets have expansion ratio $d-o(d)$ (see also \cite{mckenzie2020high}). For the LPS graphs or Morgenstern graphs, which are the $d$-regular graph that have the best known girth, this implies that sets of size smaller than $|X|^{1/3-\epsilon}$ have expansion ratio close to $d$. 

The fact that Kahale's construction does not share many of the wonderful properties of the number theoretic construction led various researchers to speculate that Ramanujan number theoretic graphs, which have very large girth, should be lossless expanders, or at least graphs with vertex expansion strictly greater than $d/2$. 

We show that this common belief is not true. As a matter of fact, some Morgenstern Ramanujan graphs are not even \emph{odd neighbor expanders}, and therefore not \emph{unique neighbor expanders}. Here is one of our main theorems:

\begin{thm} [Number theoretic Ramanujan graphs that are not odd neighbor expanders]
\label{thm:vertex expansion intro}For every prime power $q$, there exists an infinite family of $(q+1)$-regular number theoretic Ramanujan graphs $X$, and a subset $Y\subset X$, $\left|Y\right|=O(\sqrt{|X|})$,
such that every $x\in N(Y)$ has precisely $2$ neighbors in $Y$. Therefore, $Y$ has no unique neighbors and $|N(Y)|=\frac{q+1}{2}|Y|$.

Explicitly, for every odd prime power $q$ and $m$
large enough, there exists a $\left(q+1\right)$-regular bipartite
Morgenstern Ramanujan graph $$X=\Cayley\left(\PGL_{2}\left(\F_{q^{2m}}\right),\{ \gamma_{1},...,\gamma_{q+1}\} \right),$$
with generators $\gamma_{1},...,\gamma_{q+1}$, such that the subgroup $\langle \gamma_{1}^{2},...,\gamma_{q+1}^{2}\rangle$ is isomorphic to $\PGL_{2}(\F_{q^{m}})$.
Moreover, the graph 
$$Y = \Cayley\left(\left\langle \gamma_{1}^{2},...,\gamma_{q+1}^{2}\right\rangle,\left\{ \gamma_{1}^{2},...,\gamma_{q+1}^{2}\right\} \right)$$
is also a $\left(q+1\right)$-regular bipartite Morgenstern Ramanujan graph.

The subset $Y \subset X$ is of size $|Y| =  O(\sqrt{|X|})$, and every $x\in N(Y)$ has precisely $2$ neighbors in $Y$. 
\end{thm}

The theorem is based on a new idea we introduce to Combinatorics called \emph{the closed orbit method}, which is based on working with the simply connected covering object, a topic we explain in the next subsection of this long introduction. 
In Subsection~\ref{subsec:vertex expansion intro} we describe how the general method applies to the vertex expansion question. We also discuss vertex expansion in Ramanujan complexes.
In Subsection~\ref{subsec:edge expansion intro} we discuss the problem of edge expansion in Ramanujan graphs and how our method addresses it.
In Subsection~\ref{subsec:quantum ergodicity intro} we describe the surprising application of our method to the field of quantum ergodicity of graphs, where we show in Theorem~\ref{thm:quantum ergodicity} the existence of a concentrated eigenfunction of the adjacency operator, again contradicting a natural belief that such eigenfunctions do not exist for number theoretic graphs.
In Subsection~\ref{subsec: closed orbit intro} we explain the closed orbit machinery in more detail, from a group theoretic point of view, and present our main abstract result, Theorem~\ref{thm:Abstract theorem}. 
Finally, in Subsection~\ref{subsec:vertex expansion conjecture} we discuss the inverse situation in which closed orbits do not exist. We pose the conjecture that the non-existence of closed orbits implies lossless expansion of number theoretic graphs.

\subsection{Combinatorics via the Covering object}

In our work, we study finite $d$-regular graphs by understanding new properties that they inherit from their infinite simply-connected cover, the $d$-regular tree $T_d$, together with the action of some group $G$ on it. We will usually call $T_d$ by $B_G$ below, since we think of it with the $G$-action. 
The infinite covering object is already evident in the definition of a Ramanujan graph -- it is a finite $d$-regular graph that inherits the spectrum of its covering object. Namely, the graph's non-trivial spectrum is contained in the spectrum of $T_d$.

The action of the group $G$ on $T_d$ is more subtle, but it underlies the number theoretic constructions of Ramanujan graphs. For example, the LPS construction (\cite{lubotzky1988ramanujan}) is based on the action of the $p$-adic group $G=\PGL_2(\Q_q)$ ($\Q_q$ is the $q$-adic field) on its \emph{Bruhat-Tits} tree $B_G = T_{q+1}$. Using number theory which is related to quaternion algebras, it is possible to construct an \emph{arithmetic lattice} $\Gamma$ in $G$ such that by taking the quotient of $T_{q+1}$ by $\Gamma$ we get a $(q+1)$-regular graph which inherits the spectrum of the infinite tree, namely, a graph with the Ramanujan property.

In our work, we focus on the action of a subgroup $H \le G$ on $B_G$. An orbit of the $H$-action on $B_G$ gives a substructure $Z \subset B_G$ with various desired properties. This substructure $Z$ is used to solve some combinatorial questions for the infinite cover. 

We then look at the projection of $Z$ into the finite quotient graph $X$. We want to understand the image $Y \subset X$ of the map, as it inherits the properties of $Z$. Usually, this map is very complicated, and in particular, its image $Y$ is the entire finite graph $X$. However, using the \emph{closed orbit method} that we introduce in this work, we show that there are special situations when this map is simple, and in particular, its image $Y$ may be small relative to $X$. 

As we explain in Subsection~\ref{subsec: closed orbit intro}, the special situations happen if the orbit $\Gamma\backslash H$ in the compact space $\Gamma \backslash G$ is closed, hence the name of the method\footnote{More generally, we are actually interested in a \emph{periodic orbit}, which is an H-orbit supporting a finite $H$-invariant measure. When $\Gamma\backslash G$ is compact, which is the case of interest to combinatorics, both notions are equivalent, and we think that the closed orbit method simply sounds better.}. The notion of a \emph{closed orbit} is basic in ergodic theory, and has many uses in homogeneous dynamics, number theory, and representation theory. The novelty of our work is exploiting this well-known notion to get a new understanding of finite combinatorial questions. 

All the above seems quite abstract, so let us now explain how we apply it to the problem of vertex expansion. This will require a more technical discussion.


\subsection{Vertex Expansion and the Closed Orbit Method}\label{subsec:vertex expansion intro}

\paragraph{Using subgroups to find an infinite subgraph with bad vertex expansion.} 
Consider the field $\F_q((t))$ of Laurent series over the finite field $\F_q$ with $q$ elements ($q$ being a prime power). This field is analogous to the $q$-adic field $\Q_q$ when $\Q$ is replaced by the field $\F_q(t)$. As with the group $\PGL_2(\Q_q)$, the group $G=\PGL_2(\F_q((t)))$ acts naturally on a $(q+1)$-regular Bruhat-Tits tree $B_G$. Notice that in this case, there is a \emph{subfield} $\F_q((t^2))\subset\F_q((t))$. This subfield gives rise to a subgroup $H=\PGL_2(\F_q((t^2)))\le G$. Notice that the groups $G$ and $H$ are isomorphic, so $H$ acts on its own $(q+1)$-regular Bruhat-Tits tree $B_H$.

Next, consider the $H$-action on $B_G$, via the embedding of $H$ in $G$. An orbit of $H$ gives rise to an embedding of the vertices of the $(q+1)$-regular tree $B_H$ in $B_G$ (see Figure~\ref{fig:ramified extension tree} and
the discussion in Subsection~\ref{subsec:Bruhat-Tits Buildings}). The embedding can also be described as an embedding of the $(q+1,2)$-biregular
subdivision graph of $B_H$ in the $(q+1)$-regular
tree $B_G$. 

Notice that the image of the embedding is very thin, in the sense that a large ball in $B_G$ with $n$ vertices will contain $\Theta(\sqrt{n})$ vertices of $B_H$.
The following lemma says that the embedded set has bad expansion properties.

\begin{lem}[Lemma~\ref{lem:no unique neighbors}]\label{lem: bad vertex expansion in tree}
Let $Z\subset B_G$ be the embedding of the vertices of $B_H$ in $B_G$. Then each vertex $v\in N(Z)$ is a neighbor of precisely two vertices of $Z$.
\end{lem}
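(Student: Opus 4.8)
The plan is to argue entirely inside the tree $B_G$, using the description of the embedding established just above the statement (see also Subsection~\ref{subsec:Bruhat-Tits Buildings}): the embedding identifies the $(q+1,2)$-biregular subdivision of $B_H$ with a subgraph $T'\subseteq B_G$, in which $Z$ is the set of images of the original vertices of $B_H$, each of degree $q+1$ in $T'$, while $M:=V(T')\setminus Z$ is the set of subdivision midpoints, each of degree $2$ in $T'$ and adjacent in $T'$ to exactly two vertices of $Z$. Since $B_H$ and its subdivision are connected and the embedding is a graph embedding, $T'$ is a connected subgraph of the tree $B_G$, hence a subtree; in particular $T'$ is convex in $B_G$, meaning that the unique $B_G$-geodesic between any two vertices of $T'$ stays in $T'$, and consequently any edge of $B_G$ whose two endpoints lie in $T'$ is already an edge of $T'$. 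As a first remark, distinct vertices of $Z$ are at $T'$-distance at least $2$ (an edge of $B_H$ is subdivided into a path of length $2$) and hence at $B_G$-distance at least $2$, so $Z$ is an independent set and $N(Z)\cap Z=\emptyset$; in particular there is no ambiguity about $N(Z)$.

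The heart of the proof is a degree count. Fix $z\in Z$. Inside $T'$ the vertex $z$ already has $q+1$ neighbours — the $q+1$ midpoints sitting on the $q+1$ subdivided edges of $B_H$ at $z$ — and since $B_G$ is $(q+1)$-regular these must be \emph{all} of the neighbours of $z$ in $B_G$. Hence every neighbour in $B_G$ of a vertex of $Z$ lies in $M\subseteq V(T')$, i.e.\ $N(Z)\subseteq M$ (in fact $N(Z)=M$, since every midpoint is $T'$-adjacent, hence $B_G$-adjacent, to a vertex of $Z$). Now take any $v\in N(Z)$, so that $v\in M$, and let $w\in Z$ be a $B_G$-neighbour of $v$. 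Then $v$ and $w$ both lie in $T'$ and are joined by an edge of $B_G$, so by convexity that edge lies in $T'$; therefore $w$ is one of the exactly two $T'$-neighbours of $v$, both of which belong to $Z$. Consequently $v$ has precisely two neighbours in $Z$, as claimed.

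I do not expect a genuine obstacle. The only non-elementary ingredient is the structural input recalled at the outset — that the image of the subdivision of $B_H$ sits inside $B_G$ as a subtree whose branch vertices attain the full degree $q+1$ — which comes from the ramified quadratic extension underlying the inclusion $H\le G$ (base change doubles the valuation of a uniformizer, so each edge of $B_H$ is realised as a path of length $2$ in $B_G$, while the residue fields coincide, so no edge-direction at a $Z$-vertex is lost). The one point to state with care is the containment $N(Z)\subseteq M$, since it is precisely what upgrades ``at most two'' to ``exactly two''; everything else is merely convexity of a subtree of a tree.
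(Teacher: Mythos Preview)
Your proof is correct, and it takes a genuinely different route from the paper's.

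The paper's Lemma~\ref{lem:no unique neighbors} argues via the explicit matrix description of the building: for $M\in Z$ and a neighbour $M'=MA$ with $A\in N$, it exhibits a second $Z$-neighbour $M''=M'D(A)=M\,T(A)$, where $D(A)$ is the diagonal of $A$ and $T(A)\in N_H$. This computation shows only that $Z$ has \emph{no unique} neighbours (at least two), and it is written to work uniformly for $\PGL_n$; the sharpening to ``precisely two'' is obtained later, and only for the finite quotient, by combining the $\Gamma'/(\Gamma'\cap\Gamma_n)$-symmetry with Kahale's vertex-expansion bound (Lemma~\ref{lem:Symmetry lemma} and the proof of Theorem~\ref{thm:vertex expansion intro}).

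Your argument instead exploits the rank-one geometry directly: the subdivision of $B_H$ lands as a subtree $T'$ of $B_G$; a degree count shows each $z\in Z$ already has $q+1$ neighbours in $T'$, forcing $N(Z)=M$; convexity of a subtree then pins each $v\in M$ to exactly its two $T'$-neighbours in $Z$. This is more elementary, needs no spectral input, and yields the full ``precisely two'' statement already at the level of the infinite tree. The trade-off is that it is specific to $n=2$: for higher-rank buildings the convexity/degree argument has no obvious analogue, whereas the paper's matrix factorisation $T(A)=A\,D(A)$ still produces a second $Z$-neighbour.
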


\begin{figure}
\begin{tikzpicture}

\tikzstyle{leftnode}=[draw= blue!80,text=blue!80, shape = circle, fill = blue!80,inner sep=2pt];
\tikzstyle{leftlabel} = [text = blue!80, scale = 1]
\tikzstyle{rightnode}=[draw= black!80,text=black!80, shape = circle, fill = black!80,inner sep=2pt];
\tikzstyle{rightlabel} = [text = black!80,scale = 1]

\node[leftnode,label={[leftlabel,left]$\boldsymbol{\left(\begin{array}{cc}1 & 0\\ & t^2\end{array}\right)}$}] (u0) at (0,1) {};
\node[leftnode,label={[leftlabel,left]$\boldsymbol{\left(\begin{array}{cc}1 & 0\\ & 1\end{array}\right)}$}] (u1) at (0,4) {};
\node[leftnode,label={[leftlabel,left]$\boldsymbol{\left(\begin{array}{cc}t^2 & 0\\ & 1\end{array}\right)}$}] (u2) at (0,8) {};
\node[leftnode,label={[leftlabel,above]$\boldsymbol{\left(\begin{array}{cc}t^2 & 1\\ & 1\end{array}\right)}$}] (u3) at (1,8) {};

\draw[draw = blue!80, very thick]
(u0) -- (u1)
(u1) -- (u2)
(u1) -- (u3)
;

\node[leftnode,label={[leftlabel,left]$\boldsymbol{\left(\begin{array}{cc}1 & 0\\ & t^2\end{array}\right)}$}] (v0) at (5,1) {};
\node[leftnode,label={[leftlabel,left]$\boldsymbol{\left(\begin{array}{cc}1 & 0\\ & 1\end{array}\right)}$}] (v1) at (5,4) {};
\node[leftnode,label={[leftlabel,left]$\boldsymbol{\left(\begin{array}{cc}t^2 & 0\\ & 1\end{array}\right)}$}] (v2) at (5,8) {};
\node[leftnode,label={[leftlabel,above]$\boldsymbol{\left(\begin{array}{cc}t^2 & 1\\ & 1\end{array}\right)}$}] (v3) at (8,8) {};
\node[rightnode,label={[rightlabel,left]$\boldsymbol{\left(\begin{array}{cc}1 & 0\\ & t\end{array}\right)}$}] (v4) at (5,2.5) {};
\node[rightnode,label={[rightlabel,right]$\boldsymbol{\left(\begin{array}{cc}t & 1\\ & t\end{array}\right)}$}] (v5) at (7,4) {};
\node[rightnode,label={[rightlabel,left]$\boldsymbol{\left(\begin{array}{cc}t & 0\\ & 1\end{array}\right)}$}] (v6) at (5,6) {};
\node[rightnode,label={[rightlabel,right = 17pt]$\boldsymbol{\left(\begin{array}{cc}t & 1\\ & 1\end{array}\right)}$}] (v7) at (7,6) {};
\node[rightnode,label={[rightlabel,above]$\boldsymbol{\left(\begin{array}{cc}t^2 & t\\ & 1\end{array}\right)}$}] (v8) at (6,8) {};
\node[rightnode,label={[rightlabel,above]$\boldsymbol{\left(\begin{array}{cc}t^2 & t+1\\ & 1\end{array}\right)}$}] (v9) at (10.5,8) {};

\draw[draw = blue!80, very thick]
(v0) -- (v4)
(v4) -- (v1)
(v1) -- (v6)
(v1) -- (v7)
(v6) -- (v2)
(v7) -- (v3)
;

\draw[very thick]
(v4) -- (v5)
(v6) -- (v8)
(v7) -- (v9)
;

\end{tikzpicture}

\caption{\label{fig:ramified extension tree}Part of the tree of $\protect\PGL_{2}\left(\protect\F_{2}\left(\left(t^{2}\right)\right)\right)$
(left) embedded in part of the tree of $\protect\PGL_{2}\left(\protect\F_{2}\left(\left(t\right)\right)\right)$
(right). See Subsection~\ref{subsec:Bruhat-Tits Buildings} for the
meaning of the vertex labels.}
\end{figure}

\paragraph{Using the closed orbit method to get a finite set with bad vertex expansion.}
Once we demonstrated the non-expanding set $Z\subset B_G$ in the infinite world, we may take a quotient of $B_G$ by a lattice, and look at the image $Y$ of $Z$ in the resulting finite graph. 

Since every neighbor of $Z$ is connected to $Z$ by at least two edges, the same is true for its image $Y\subset X$. Therefore, $Y$ has no unique neighbors, which is the property we are looking for. However, $Y$ may contain a lot of vertices, and even the whole of the graph $X$. The closed orbit method allows us to find an arithmetic quotient where $Y$ maintains its volume in the tree. Namely, we have the following: 

\begin{lem}[Special case of Theorem~\ref{thm:Abstract theorem}]\label{lem:small volume in quotient}
It is possible to choose a family of arithmetic lattices $\Gamma \le G$, such that the projection $Y$ of the set $Z$ into the finite graph $X= \Gamma \backslash B_G$ is of size $|Y| = O\left(\sqrt{|X|}\right)$. 
\end{lem}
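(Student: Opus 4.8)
The plan is to realize the substructure $Z$ as (a piece of) the building $B_H$ of the subgroup $H \le G$, and then to choose the lattice $\Gamma$ so that the $H$-orbit $\Gamma \backslash \Gamma H$ inside the compact quotient $\Gamma \backslash G$ is \emph{closed}. The reason this is the right move is that a closed orbit $\Gamma \backslash \Gamma H$ carries a finite $H$-invariant measure, i.e. $\Gamma_H := \Gamma \cap H$ (possibly after conjugating $H$) is a lattice in $H$; the projection of $Z \subset B_G = B_H$ to $X = \Gamma \backslash B_G$ then factors through the \emph{finite} graph $\Gamma_H \backslash B_H$, so $|Y|$ is bounded by the number of vertices of $\Gamma_H \backslash B_H$, which is a fixed finite number independent of $\Gamma$. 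The point is that one can arrange the volume of $\Gamma_H \backslash H$ to stay bounded (or grow much more slowly than the volume of $\Gamma \backslash G$), whereas without closedness the image $Y$ would be equidistributed and fill up all of $X$.

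Concretely, here $G = \PGL_2(\F_q((t)))$ and $H = \PGL_2(\F_q((t^2)))$, with $H$ embedded via the ramified extension $\F_q((t^2)) \subset \F_q((t))$, and $Z$ is the vertex set of $B_H$ sitting inside $B_G$ as in Figure~\ref{fig:ramified extension tree}. First I would fix an arithmetic lattice $\Gamma_0 \le G$ of the Morgenstern type (coming from a quaternion algebra over $\F_q(t)$, ramified appropriately) and arrange that $\Gamma_0$ is commensurable with an $H$-arithmetic lattice, so that $\Gamma_0 \cap H$ is already a lattice in $H$ — equivalently, the orbit of $H$ through the identity coset in $\Gamma_0 \backslash G$ is closed. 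Then I would pass to a congruence tower: choose a sequence of congruence subgroups $\Gamma_n \le \Gamma_0$ (e.g. principal congruence subgroups modulo a suitable sequence of primes of $\F_q[t]$ that split in a way compatible with $H$) so that $[\Gamma_0 : \Gamma_n] \to \infty$, hence $|X_n| = |\Gamma_n \backslash B_G| \to \infty$, but the intersection $\Gamma_n \cap H$ stays \emph{fixed} (or grows only polynomially slower) — this is possible precisely because the closed orbit lets us decouple the index in $G$ from the index in $H$. Counting: $|Y_n| \le \#\{\text{vertices of } (\Gamma_n \cap H)\backslash B_H\}$, which is $O(\mathrm{vol}((\Gamma_n\cap H)\backslash H))$, and by a direct volume computation (Schanuel/Weil) one gets $\mathrm{vol}((\Gamma_n \cap H)\backslash H) = O(\sqrt{\mathrm{vol}(\Gamma_n \backslash G)}) = O(\sqrt{|X_n|})$, because $H$ is ``half-dimensional'' relative to $G$ in the relevant arithmetic sense (the ramified degree-$2$ base change doubles the relevant exponents). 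Finally, invoking Lemma~\ref{lem: bad vertex expansion in tree}, every vertex of $N(Z)$ has exactly two neighbors in $Z$, and since the covering map $B_G \to X_n$ is a graph morphism this property is inherited by $Y_n$ (a neighbor of $Y_n$ in $X_n$ lifts to a neighbor of $Z$ in $B_G$, giving exactly two edges down into $Z$, hence at least two into $Y_n$; with the girth/injectivity radius large enough along the tower the ``exactly two'' is preserved).

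The main obstacle — and the technical heart of the argument — is engineering the arithmetic so that closedness of the $H$-orbit survives down the whole congruence tower \emph{while} the index in $G$ blows up. This requires setting up the quaternion algebra $D/\F_q(t)$ and the order in it so that $D$ contains $\F_q(t^2)$-structure realizing $H$ as $D_H^\times$ for a sub-quaternion-algebra $D_H/\F_q(t^2)$, and then choosing the congruence levels at primes that behave correctly under the base change $\F_q(t^2) \hookrightarrow \F_q(t)$ (ramified at $t$ and $\infty$), so that $\Gamma_n \cap H$ is a congruence subgroup of $D_H^\times$ at a \emph{controlled}, slowly-growing level. A secondary point needing care is the volume comparison itself: one must check that the Haar measure normalizations of $G$ and $H$ relative to the chosen orders give exactly the square-root relationship, and that the subdivision (the $(q+1,2)$-biregular structure of $B_H$ inside $B_G$) does not introduce extra vertices changing the count by more than a constant. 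Given these, the $O(\sqrt{|X|})$ bound, and hence Theorem~\ref{thm:vertex expansion intro}, follows.
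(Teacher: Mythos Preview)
Your high-level strategy is the paper's: realize $Z$ via the closed $H$-orbit, pick an arithmetic $\Gamma_0$ with $\Gamma_0\cap H$ a lattice in $H$, then descend a congruence tower and compare indices. Two points in your execution are off, however.

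First, you cannot arrange $\Gamma_n\cap H$ to stay fixed: any principal congruence condition at a place $w$ of $l=\F_q(t)$ restricts nontrivially to $H$ at the place $v$ of $k=\F_q(t^2)$ below $w$, since $H$ sits diagonally in $\prod_{w\mid v}\boldsymbol{G}(l_w)$. What actually produces the square root is choosing $v_1$ \emph{inert} in $l/k$ (not ``split in a way compatible with $H$''): then the residue field at the unique $w_1$ above $v_1$ has order equal to the square of that at $v_1$, so $[\Gamma_0:\Gamma(w_1)]=\left|\boldsymbol{G}(\O_{w_1}/\pi_{w_1}\O_{w_1})\right|$ is of order $\left|\boldsymbol{G}(\O_{v_1}/\pi_{v_1}\O_{v_1})\right|^2=[\Gamma_0\cap H:\Gamma(w_1)\cap H]^2$. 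This is a direct finite-group-size count (Equation~\eqref{eq:Group Sizes}), not a Schanuel--Weil volume formula; your ``half-dimensional'' intuition is correct, but its source is the residue degree $2$ of an inert prime, and taking a split prime gives no saving at all.

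Second, your last sentences about ``exactly two neighbors'' are not part of this lemma, and your proposed mechanism (large girth / injectivity radius) is not what the paper uses: the paper instead exploits the transitive action of $\Gamma'/(\Gamma'\cap\Gamma_n)$ on $Y$ together with Kahale's spectral bound (Theorem~\ref{thm:Kahale vertex expansion}) to rule out any vertex with three or more neighbors in $Y$.
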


Most of the non-explicit part of Theorem~\ref{thm:vertex expansion intro} follows from Lemma~\ref{lem:small volume in quotient} and the discussion above. The discussion implies that every vertex  $x \in N(Y)$ is connected by \emph{at least} two edges to $Y$. The fact that $x$ has precisely two neighbors in $Y$ follows from a symmetry trick we explain in Lemma~\ref{lem:Symmetry lemma}. This implies that there are number theoretic Ramanujan graphs that are not even odd neighbor expanders.  

In the following, we apply the closed orbit method to the Morgenstern Ramanujan graphs (\cite{morgenstern1994existence}). This will give \emph{explicit} number theoretic graphs with bad vertex expansion, and the explicit part of Theorem~\ref{thm:vertex expansion intro}.

\paragraph{ Morgenstern graphs: Explicit number theoretic graphs that are not lossless expanders.}

Let us describe how the above can be applied to the construction of Ramanujan graphs by Morgenstern (\cite{morgenstern1994existence}), for $q$ an odd prime power. Morgenstern constructs
a lattice $\Gamma\le\PGL_{2}\left(\F_{q}\left(\left(t\right)\right)\right)$
that acts simply transitively on the Bruhat-Tits tree $B_{G}$, with
generators $\gamma_{1},...,\gamma_{q+1}$. If $\Gamma_{n}$ is a normal subgroup of $\Gamma$, the graph $\Gamma_{n}\backslash B_{G}$ is then naturally isomorphic to the Cayley graph 
$X=\Cayley\left(\Gamma/\Gamma_{n},\left\{ \gamma_{1},...,\gamma_{q+1}\right\} \right)$.

When $\Gamma_{n}$ is chosen by some explicit congruence conditions
we get the Morgenstern graphs, which have plenty of nice properties,
described in \cite[Theorem 4.13]{morgenstern1994existence}, and are
very similar to the celebrated LPS graphs (\cite{lubotzky1988ramanujan,lubotzky1994discrete}).
In particular, $\Gamma/\Gamma_n$ is isomorphic to $\PGL_2(\F_{q^m})$, the graphs are Ramanujan graphs, and their girth is at least $4/3\log_{q}(|X|)$.

The general method applies as follows: It turns out that after a ``change of variables'', for $H=\PGL_{2}(\F_{q}((t^2)))$,
the subgroup $\Gamma\cap H$ is generated by $\delta_{1}=\gamma_{1}^{2},\dots,\delta_{q+1}=\gamma_{q+1}^{2}$
and is actually also a Morgenstern lattice of $H$, which acts simply transitively on the Bruhat-Tits tree $B_{H}$. After some computations, we end up with the explicit part of Theorem~\ref{thm:vertex expansion intro}.

\paragraph{Lossless expansion for Ramanujan complexes.}

A promising option for graphs with good vertex expansion are the underlying graphs of the Ramanujan complexes constructed in \cite{lubotzky2005ramanujan,li2004ramanujan} (see Subsection~\ref{subsec:Bruhat-Tits Buildings}
for some discussion of Ramanujan complexes).

Unlike $d$-regular graphs, higher dimensional Ramanujan complexes have a rigid local structure, which implies interesting combinatorial properties. For example, many recent works used the Garland method to show that they are \emph{high dimensional expanders} (see \cite{lubotzky2018high} and the references therein). Therefore, one may speculate that the rigid local structure will imply some form of lossless expansion. 

However, we show that the underlying graph on the vertices of the complex can have bad vertex expansion:
\begin{thm} [Skeleton graphs of Ramanujan complexes are not unique neighbor expanders]
\label{thm:Complexes}Let $n$ be prime, $q$ a prime power, $G=\PGL_{n}(\F_{q}((t)))$
and $B_{G}$ be the Bruhat-Tits building of $G$. Then there is an infinite family of Ramanujan quotients $\Gamma\backslash B_{G}$ such that its underlying
graph $X$ has a subset $Y$ of size $|Y|=O\left(|X|^{1/2}\right)$,
with no unique neighbors.
\end{thm}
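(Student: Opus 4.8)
The plan is to run the same two‑stage strategy as in Theorems~\ref{thm:vertex expansion intro} and~\ref{thm:Exlicit theorem}, now inside the affine building $B_{G}$ of type $\tilde A_{n-1}$: first exhibit a thin, non‑expanding infinite set $Z$ in the $1$‑skeleton of $B_{G}$ arising from a subgroup $H\le G$, and then import it into a Ramanujan quotient by the closed orbit method. Since $n$ is prime, for $n=2$ the statement is already a special case of Theorem~\ref{thm:vertex expansion intro}, so I assume $n$ is an odd prime. I take $H=\PGL_{n}\!\left(\F_{q}\!\left(\left(t^{2}\right)\right)\right)\le\PGL_{n}\!\left(\F_{q}\!\left(\left(t\right)\right)\right)=G$, attached to the ramified quadratic extension $F=\F_{q}((t))$ of $E=\F_{q}((t^{2}))$, whose uniformizers satisfy $\pi_{E}=\pi_{F}^{2}$. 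As in Subsection~\ref{subsec:Bruhat-Tits Buildings}, the building $B_{H}$ of $H$ embeds into $B_{G}$ by sending an $\O_{E}$‑lattice $N\subset E^{n}$ to the class of the $\O_{F}$‑lattice $\O_{F}N=\O_{F}\otimes_{\O_{E}}N$; let $Z\subset B_{G}$ be the image of the vertex set of $B_{H}$. Because $[F:E]=2$ is the ramification index, this embedding is ``subdivision‑like'': it doubles the natural displacement, so a ball of radius $r$ in the $1$‑skeleton of $B_{G}$ meets $Z$ in (essentially) a ball of radius $r/2$ in $B_{H}$, and $Z$ is thin of the expected square‑root density.

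The first step is the building analogue of Lemma~\ref{lem: bad vertex expansion in tree}: \emph{no vertex of the $1$‑skeleton of $B_{G}$ is a neighbor of exactly one vertex of $Z$}. I would prove this by a direct computation with $\O_{F}$‑lattices, using $\pi_{E}=\pi_{F}^{2}$. After moving a vertex $z\in Z$ to the base class $[\O_{F}^{n}]$ by the $H$‑action (which preserves $Z$), its $1$‑skeleton neighbors are exactly the classes $[L_{V}]$ with $\pi_{F}\O_{F}^{n}\subsetneq L_{V}\subsetneq\O_{F}^{n}$, indexed by the proper nonzero $\F_{q}$‑subspaces $V\subset\F_{q}^{n}=\O_{F}^{n}/\pi_{F}\O_{F}^{n}$; such an $L_{V}$ has elementary divisor type $(1^{k},\pi_{F}^{n-k})$ relative to $\O_{F}^{n}$ with $1\le k\le n-1$, a type of mixed parity, whereas any lattice of the form $\O_{F}N$ has a representative whose type is all even. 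Hence $[L_{V}]\notin Z$, so every vertex of $Z$ has no neighbors in $Z$. On the other hand, choosing an $\O_{E}$‑basis $v_{1},\dots,v_{n}$ of $\O_{E}^{n}$ whose first $k=\dim V$ vectors lift a basis of $V$, the $\O_{E}$‑lattice $N'=\O_{E}\langle v_{1},\dots,v_{k}\rangle+\pi_{E}\O_{E}\langle v_{k+1},\dots,v_{n}\rangle$ has $[\O_{F}N']\in Z$, $[\O_{F}N']\neq[\O_{F}^{n}]$, and $\pi_{F}L_{V}\subsetneq\O_{F}N'\subsetneq L_{V}$, so $[\O_{F}N']$ is a second neighbor of $[L_{V}]$ inside $Z$. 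Thus every vertex of $N(Z)\setminus Z$ has at least two neighbors in $Z$. This is the step I expect to cost the most care, since unlike in the tree case one must carry the argument uniformly over all subspace dimensions $k=1,\dots,n-1$; but it is elementary.

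The second step is the closed orbit / volume step. Since $n$ is an odd prime, $\gcd(n,2)=1$, so a central division algebra $D$ of degree $n$ over the rational function field $\F_{q}(y^{2})$, ramified at enough places, stays a division algebra after base change to $\F_{q}(y)$; choosing the relevant place so that $\F_{q}(y^{2})$ completes to $E$ and $\F_{q}(y)$ to $F$ realizes the arithmetic group of $D\otimes_{\F_{q}(y^{2})}\F_{q}(y)$ as a cocompact lattice $\Gamma\le G$ whose intersection $\Gamma\cap H$ is the cocompact arithmetic lattice of $D$ over $\F_{q}(y^{2})$ inside $H$. By the standard construction of Ramanujan complexes from division algebras \cite{lubotzky2005ramanujan} the congruence quotients $X_{m}=\Gamma_{m}\backslash B_{G}$ are Ramanujan complexes, and the orbit $(\Gamma_{m}\cap H)\backslash H$ is closed in $\Gamma_{m}\backslash G$ because $H$ is defined over the subfield. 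Applying Theorem~\ref{thm:Abstract theorem} to this closed orbit — exactly as Lemma~\ref{lem:small volume in quotient} is deduced from it in the tree case, the exponent $1/2$ now coming from the ramification index $[F:E]=2$ (a level $\mathfrak{p}_{F}^{m}$ in $F$ cuts out a level $\mathfrak{p}_{E}^{m/2}$ in $E$) — gives an infinite subfamily for which the image $Y_{m}\subset X_{m}$ of $Z$ has $|Y_{m}|=O\!\left(|X_{m}|^{1/2}\right)$.

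Finally I combine the two steps. For $m$ large the congruence quotient $X_{m}$ has injectivity radius bigger than $2$, so the ball of radius $2$ about any vertex of $B_{G}$ embeds into $X_{m}$; lifting an edge from a vertex $\bar x\in N(Y_{m})$ to $Y_{m}$ and applying the building lemma of Step~1 then yields two \emph{distinct} neighbors of $\bar x$ in $Y_{m}$. Hence every vertex of the $1$‑skeleton $X_{m}$ has $0$ or at least $2$ neighbors in $Y_{m}$, so $Y_{m}$ has no unique neighbors, which is the assertion of the theorem. The only genuinely new ingredient beyond the machinery already developed is the building‑theoretic Step~1; the remainder is the $\PGL_{n}$ version of the argument used for Morgenstern graphs, with the exponent $1/2$ produced automatically by the degree‑$2$ extension.
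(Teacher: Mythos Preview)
Your proposal is correct and is essentially the paper's own proof: your Step~1 is Lemma~\ref{lem:no unique neighbors} (the paper phrases it via the matrix representatives of Subsection~\ref{subsec:Bruhat-Tits Buildings} and the factorization $T(A)=A\,D(A)$ rather than in your lattice/elementary-divisor language, but the two arguments coincide), your Step~2 is exactly Proposition~\ref{prop:good pairs} combined with Theorem~\ref{thm:Good Pair Theorem}, and your injectivity-radius care in Step~3 makes explicit something the paper leaves implicit. One small correction to your commentary: in Theorem~\ref{thm:Abstract theorem} the exponent $1/[l:k]=1/2$ arises from congruence subgroups taken at an auxiliary \emph{inert} place $v_1$ (where the residue field over $l$ has square size), not from ramification at the building place $v_0$ as your parenthetical ``a level $\mathfrak p_F^m$ in $F$ cuts out a level $\mathfrak p_E^{m/2}$ in $E$'' suggests---but since you invoke the theorem correctly this does not affect the validity of the argument.
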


The proof of Theorem~\ref{thm:Complexes} is based on applying the closed orbit method to $H=\PGL_n(\F_q((t^2)))\le G=\PGL_n(\F_q((t)))$. 
The basic observation is the following lemma:
\begin{lem}[Lemma~\ref{lem:no unique neighbors}]
The embedding $Z$ of the vertices of the building
of $B_H$ in the vertices of the building of $B_G$ have no unique neighbors.
\end{lem}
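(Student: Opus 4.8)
**Proof proposal for the final lemma (no unique neighbors for $Z \subset B_G$ when $H = \PGL_n(\F_q((t^2))) \le G = \PGL_n(\F_q((t)))$).**

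The plan is to analyze the embedding $B_H \hookrightarrow B_G$ at the level of lattices in the vector space $V = \F_q((t))^n$, viewed as a module over the subring $R = \F_q((t^2))$. Recall that a vertex of $B_G$ is a homothety class of $\F_q[[t]]$-lattices in $V$, while a vertex of $B_H$ is a homothety class of $\F_q[[t^2]]$-lattices in the $n$-dimensional $\F_q((t^2))$-space $V' = \F_q((t^2))^n$. The key structural point is that $\F_q((t)) = \F_q((t^2)) \oplus t\,\F_q((t^2))$ as an $\F_q((t^2))$-module, so $V \cong V' \oplus tV'$ has dimension $2n$ over $\F_q((t^2))$, and an $\F_q[[t^2]]$-lattice $L'$ in $V'$ gives rise to the $\F_q[[t]]$-lattice $Z(L') := L' \oplus tL' \cdot (\text{completion})$ — more precisely, $\F_q[[t]]\cdot L'$, the $\F_q[[t]]$-span of $L'$ inside $V$. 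First I would make this map explicit and check it is well-defined on homothety classes, giving the embedding $Z$ of vertex sets promised in the statement; I would also record that two $B_H$-adjacent vertices map to vertices at distance $2$ in $B_G$, recovering the $(q+1,2)$-biregular subdivision picture mentioned for the rank-two case, and that the image $Z$ is genuinely thin (a ball of radius $2r$ in $B_G$ meets $Z$ in roughly the number of $B_H$-vertices in a radius-$r$ ball).

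Next, the heart of the argument: take a vertex $w \in N(Z) \setminus Z$, so $w$ is adjacent in $B_G$ to some $z = Z(L') \in Z$, and I must produce a *second* neighbor of $w$ lying in $Z$. Represent $w$ by a lattice $M$ with $tL \subsetneq M \subsetneq L$ where $L = \F_q[[t]]\cdot L'$ is the representative of $z$; then $\bar M := M / tL$ is a proper nonzero $\F_q$-subspace of the $n$-dimensional space $L/tL$. The subtlety is the action of the involution-type structure: multiplication by $t$ induces an $\F_q$-linear map on $L/tL$ (since $t^2 L \subseteq tL$), and the "$B_H$-directions" are exactly the lattices $M$ for which $\bar M$ is $t$-invariant — these correspond to sublattices of the form $L' \oplus (\text{something})$ compatible with the $R$-structure. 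For a generic neighbor $w$, $\bar M$ is *not* $t$-invariant, and I claim the correct second neighbor in $Z$ is obtained from $t\bar M$: concretely, set $M' = \F_q[[t]]\cdot M$ or a closely related construction, and show $M'$ (or its homothety class) again lies in $Z$ and is adjacent to $w$. The cleanest way to see this is to pass to the $R$-lattice generated: from $M \subset L$, the $R$-submodule $M \cap V'$ together with its $R$-span should recover an $\F_q[[t^2]]$-lattice $L''$ in $V'$ with $Z(L'')$ adjacent to $w$ and distinct from $z$. I would verify distinctness by a dimension/index count in $L/tL$, using that $w \notin Z$ forces $\bar M \ne t\bar M$.

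I expect the main obstacle to be precisely this last identification — pinning down, in full generality for $\PGL_n$ rather than $\PGL_2$, which sublattice construction produces the second $Z$-neighbor and proving it is adjacent (distance exactly $1$) rather than accidentally equal to $w$ or at distance $2$. In the rank-two case this is the "trick" already alluded to after Lemma~\ref{lem:small volume in quotient}, and it is essentially forced because $Z$ is the subdivision of a $(q+1)$-regular tree, so every subdivision vertex of degree $2$ sits on a unique geodesic; in higher rank the building geometry is richer, so I would argue locally in the link of $z$: the link of $z$ in $B_G$ is the flag complex of $L/tL \cong \F_q^n$, the link of $z$ restricted to $Z$-directions corresponds to $t$-invariant flags, and the statement becomes a clean linear-algebra fact — a $t$-invariant vertex set in the projective space over $\F_q^n$ has the property that every non-invariant point lies on a line through two invariant points, or the analogous incidence statement. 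Proving that incidence lemma over $\F_q$ with the operator $t$ (whose Jordan structure I would need to control using the explicit form of the embedding) is where the real work lies; everything else is bookkeeping with lattices and homothety classes. Finally I would note that, just as in the graph case, this ``at least two, hence exactly two after a trick'' conclusion combined with the closed-orbit quotient construction of Theorem~\ref{thm:Abstract theorem} yields the global statement of Theorem~\ref{thm:Complexes}.
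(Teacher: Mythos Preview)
Your proposal identifies the right target --- given a neighbor $w$ of some $z\in Z$, produce a second $Z$-neighbor --- but leaves the essential step unresolved, and several of the concrete suggestions are incorrect. The claim that ``multiplication by $t$ induces an $\F_q$-linear map on $L/tL$'' is true only trivially: $t$ sends $L$ into $tL$, so the induced endomorphism of $L/tL$ is zero, and the whole ``$t$-invariant subspace'' picture you build on it collapses. The suggestion $M'=\F_q[[t]]\cdot M$ is likewise vacuous, since $M$ is already an $\F_q[[t]]$-lattice. Your alternative idea --- intersect the neighbor with $V'=\F_q((t^2))^n$ and take the $\F_q[[t]]$-span --- is in fact the right construction and can be pushed through by a short direct check (one verifies $tM'\subset \F_q[[t]](M'\cap L')\subsetneq M'$ using $M=L'\oplus tL'$), but you do not carry this out, and the proposed reduction to an ``incidence lemma in the link'' with a Jordan-form analysis of $t$ is an unnecessary detour built on the faulty $t$-action.

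The paper's proof is far shorter and takes a different route: it works in the explicit upper-triangular matrix parametrisation of vertices and neighbors given in Subsection~\ref{subsec:Bruhat-Tits Buildings}. There is a bijection $T:N\to N_H$ between the neighbor-matrices of $B_G$ and $B_H$ (replace $t$ by $t^2$), and the key one-line observation is that for $A\in N$ with diagonal part $D(A)=\operatorname{diag}(t^{m_1},\dots,t^{m_n})$ one has $T(A)=A\,D(A)$, simply because whenever $m_j=1$ the $j$-th column of $A$ is already zero off the diagonal. Hence if $M\in Z$ and $M'=MA$ is any $B_G$-neighbor, then $M''=M'D(A)=M\,T(A)$ is simultaneously a $B_G$-neighbor of $M'$ and the image of a $B_H$-neighbor of $M$, so $M''\in Z$ and $M''\ne M$. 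No lattice intersections, link geometry, or case analysis are needed.
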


The lemma implies that the projection $Y \subset X$ of $Z \subset B_G$ will also have no unique neighbors. However, it does not have to be small. The closed orbit method allows us to find lattices such that the image $Y$ satisfies $|Y|=O(\sqrt{|X|})$.

\subsection{Bad Edge Expansion for Number Theoretic Graphs}\label{subsec:edge expansion intro}

The results about vertex expansion we described above have an analog for edge expansion. The \emph{edge expansion} of a set $S \subset X$ is the ratio $\frac{M(S,X-S)}{|S|}$, where $M(S,X-S)$ is the number of edges between $S$ and its complement in $X$. 
Another way of studying this ratio is by looking at the number $M(S,S)$ of internal edges in $S$, as $M(S,X-S)+M(S,S) = d|S|$. Finally, $\frac{M(S,S)}{|S|}$ is simply the average degree of the induced graph on $S$, which is the property we will actually study.

A graph is a good edge expander if the average induced degree for every small set $S$ is small. The best result about the connection between spectral gap and edge expansion is given in the following result of Kahale:
\begin{thm}[Kahale (\cite{kahale1995eigenvalues}), see also Theorem~\ref{thm:Kahale edge expansion}]
\label{thm:Kahale edge expansion intro}Let $X$ be a $\left(q^2+1\right)$-regular
Ramanujan graph, with $\left|X\right|\to\infty$. Then for every subset
$Y\subset X$ with $\left|Y\right|=o\left(\left|X\right|\right)$,
the average degree of the induced graph on $Y$ is bounded by $\sqrt{q}+1+o\left(1\right)$.
\end{thm}

As with vertex expansion, we can prove that there exist number theoretic graphs with as bad edge expansion as allowed by Kahale's result:
\begin{thm}[Kahale's spectral bound for edge expansion is tight]\label{thm:edge expansion}  For every prime power $q$, there exists
an infinite family of $\left(q^{2}+1\right)$-regular number theoretic
Ramanujan graphs $X$, and a $\left(q+1\right)$-regular induced subgraph
$Y\subset X$, $\left|Y\right|=O\left(\sqrt{\left|X\right|}\right)$.
\end{thm}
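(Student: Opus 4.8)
The plan is to mimic the strategy used for Theorem~\ref{thm:vertex expansion intro}, but now with the inclusion $H = \PGL_2(\F_q((t^2))) \le G = \PGL_2(\F_{q^2}((t)))$ realized so that the $(q+1)$-regular tree $B_H$ embeds in the $(q^2+1)$-regular tree $B_G$ as a genuine subtree on a subset of vertices (not via a subdivision). Here the key arithmetic point is that $\F_{q^2}((t))$ is a degree-two \emph{unramified} extension of a field over which one finds a copy of $\PGL_2$ acting on a $(q+1)$-regular subtree: passing from residue field $\F_q$ to $\F_{q^2}$ multiplies the valence $q+1$ into $q^2+1$, and the vertices of $B_H$ sit inside $B_G$ so that each vertex of $Z = B_H \subset B_G$ keeps all its $q+1$ neighbors inside $Z$, i.e.\ $Z$ is $(q+1)$-regular as an induced subgraph of the $(q^2+1)$-regular tree. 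First I would make this embedding precise (the analog of Figure~\ref{fig:ramified extension tree}, Subsection~\ref{subsec:Bruhat-Tits Buildings}), checking in particular that the induced subgraph structure on $Z$ is exactly $B_H$, so that $Z$ is a $(q+1)$-regular subtree.

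Second, I would invoke the closed orbit machinery exactly as in Lemma~\ref{lem:small volume in quotient} (the special case of Theorem~\ref{thm:Abstract theorem}): choose a family of arithmetic lattices $\Gamma \le G$ so that the orbit $\Gamma \cap H \backslash H$ is closed in $\Gamma \backslash G$, i.e.\ $\Gamma \cap H$ is a lattice in $H$. Then the projection $Y$ of $Z$ into $X = \Gamma \backslash B_G$ is the finite quotient graph $(\Gamma \cap H) \backslash B_H$, which is a finite $(q+1)$-regular graph. The volume bookkeeping is the same one that gives $|Y| = O(\sqrt{|X|})$ in Lemma~\ref{lem:small volume in quotient}: the covolume of a closed $H$-orbit scales like the square root of the covolume of $\Gamma$ as one runs through the congruence tower, because $H$ has ``half the dimension'' in the relevant sense (it is the $\F_q((t^2))$-points versus the $\F_{q^2}((t))$-points). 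One should also check that along the congruence tower $|Y| \to \infty$, so that the family is genuinely infinite. Since $Z$ is $(q+1)$-regular and the projection $B_H \to (\Gamma\cap H)\backslash B_H$ is a covering map of graphs (for $\Gamma$ deep enough in the tower, using the large girth), the induced subgraph of $X$ on $Y$ is again $(q+1)$-regular; this is the statement of the theorem. The Ramanujan property of $X$ is inherited from $B_G$ exactly as in the number-theoretic construction cited in the introduction.

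The main obstacle I anticipate is the second step: producing lattices $\Gamma \le G = \PGL_2(\F_{q^2}((t)))$ for which $\Gamma \cap H$ is a lattice in $H = \PGL_2(\F_q((t^2)))$ \emph{and} which give Ramanujan quotients, while simultaneously controlling the ratio of covolumes so that $|Y| = \Theta(\sqrt{|X|})$. Morally this is the same closed-orbit existence problem solved for Theorem~\ref{thm:vertex expansion intro}, but the unramified extension $\F_{q^2}((t))/\F_q((t^2))$ behaves differently from the ramified extension $\F_q((t))/\F_q((t^2))$ used there, so one has to redo the local analysis: one needs a quaternion (or quadratic form) datum over the function field whose completion at the relevant place yields $H$ inside $G$ and whose arithmetic lattice $\Gamma$ meets $H$ in a congruence lattice. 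I would handle this by the same abstract mechanism behind Theorem~\ref{thm:Abstract theorem}, applied to this new pair $(G,H)$, and then verify the two numerical outputs — Ramanujan and $|Y|=\Theta(\sqrt{|X|})$ — place by place. The lower bound $|Y| = \Omega(\sqrt{|X|})$ (as opposed to just $O$) requires knowing that the $H$-orbit is not too small, which again follows from the covolume computation once the closed orbit is exhibited; this is why the statement here is $\Theta$ rather than $O$.
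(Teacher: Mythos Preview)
Your choice of subgroup is wrong, and this breaks the argument at the very first step. You take $H=\PGL_2(\F_q((t^2)))$ inside $G=\PGL_2(\F_{q^2}((t)))$, calling the extension unramified; but $\F_{q^2}((t))/\F_q((t^2))$ is a degree-$4$ extension with ramification index $2$ (the uniformizer changes from $t^2$ to $t$). Consequently, neighbors in $B_H$ are sent to vertices at distance $2$ in $B_G$, exactly as in the subdivision picture of Figure~\ref{fig:ramified extension tree}, and the induced subgraph of $B_G$ on the image $Z$ has no edges at all --- certainly not $(q+1)$-regular. The correct pair, used in the paper, is the genuinely unramified degree-$2$ extension $k_0=\F_q((t))\subset l_0=\F_{q^2}((t))$, with $H=\PGL_2(k_0)\le G=\PGL_2(l_0)$. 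Here the uniformizer is the same on both sides, so $N_H\subset N$ and adjacent vertices in $B_H$ map to adjacent vertices in $B_G$; this is Lemma~\ref{lem:edge exapnsion embedding lemma}. With your $H$ the covolume scaling would also give $|Y|=O(|X|^{1/4})$, not $\Theta(\sqrt{|X|})$.

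There is a second, independent gap. Even with the correct $H$, your claim that ``the induced subgraph of $X$ on $Y$ is again $(q+1)$-regular'' does not follow from the covering $B_H\to(\Gamma\cap H)\backslash B_H$. That covering only guarantees that every vertex of $Y$ has \emph{at least} $q+1$ neighbors in $Y$; additional edges of $B_G$ (outside $B_H$) could, after projection to $X$, join two vertices of $Y$ and raise the induced degree. The paper rules this out by a symmetry-plus-Kahale argument: if some $y\in Y$ had induced degree $>q+1$, then the action of $\Gamma'/\Gamma'(v_1)$ on $X$ (which preserves $Y$) would produce $\Theta(|Y|)$ such vertices, forcing the average induced degree above $q+1+\delta$ for some fixed $\delta>0$, contradicting Theorem~\ref{thm:Kahale's edge expansion} since $|Y|=o(|X|)$. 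You need this step, or something equivalent, to get equality.
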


The proof of this theorem is based on applying the general construction
to $G=\PGL_{2}\left(\F_{q^{2}}\left(\left(t\right)\right)\right)$
and $H=\PGL_{2}\left(\F_{q}\left(\left(t\right)\right)\right)$. 
This gives as embedding $Z$ of $B_H$ in $B_G$. The basic property of this embedding is described in  Figure~\ref{fig:edge expansion} and the following Lemma:
\begin{lem}
Every vertex of $Z$ is connected to $q+1$ other vertices of $Z$.
\end{lem}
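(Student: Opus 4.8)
The plan is to realize $Z$ as the image of an \emph{isometric} embedding of the tree $B_{H}$ into the tree $B_{G}$; once this is in hand the lemma is immediate. Recall from Subsection~\ref{subsec:Bruhat-Tits Buildings} that the vertices of $B_{G}$ (resp. $B_{H}$) are the homothety classes of $\mathcal{O}_{F}$-lattices in $F^{2}$ (resp. $\mathcal{O}_{E}$-lattices in $E^{2}$), where $F=\F_{q^{2}}\left(\left(t\right)\right)$, $E=\F_{q}\left(\left(t\right)\right)$ and $\mathcal{O}_{F}=\F_{q^{2}}[[t]]$, $\mathcal{O}_{E}=\F_{q}[[t]]$; for $L_{2}\subseteq L_{1}$ with invariant factors $L_{1}/L_{2}\cong\mathcal{O}/\mathfrak{m}^{a}\oplus\mathcal{O}/\mathfrak{m}^{b}$ ($a\ge b\ge 0$) the tree distance between $[L_{1}]$ and $[L_{2}]$ is $a-b$, and an edge is the case $a=1$, $b=0$. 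The set $Z$ is the image of the map $\iota$ sending the class of an $\mathcal{O}_{E}$-lattice $L$ to the class of the $\mathcal{O}_{F}$-lattice $L\otimes_{\mathcal{O}_{E}}\mathcal{O}_{F}\subseteq F^{2}$, which is well defined since $(tL)\otimes_{\mathcal{O}_{E}}\mathcal{O}_{F}=t\left(L\otimes_{\mathcal{O}_{E}}\mathcal{O}_{F}\right)$.

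First I would record the one structural input: $F/E$ is \emph{unramified} of degree $2$ --- the residue extension is $\F_{q^{2}}/\F_{q}$ while $t$ remains a uniformizer --- so that $\mathfrak{m}_{E}\mathcal{O}_{F}=\mathfrak{m}_{F}$ and hence $\left(\mathcal{O}_{E}/\mathfrak{m}_{E}^{k}\right)\otimes_{\mathcal{O}_{E}}\mathcal{O}_{F}\cong\mathcal{O}_{F}/\mathfrak{m}_{F}^{k}$ for every $k\ge 0$. Then, for $L_{2}\subseteq L_{1}$ over $\mathcal{O}_{E}$ with invariant factors $(a,b)$, I would tensor the short exact sequence $0\to L_{2}\to L_{1}\to L_{1}/L_{2}\to 0$ with $\mathcal{O}_{F}$ (free, hence flat, over $\mathcal{O}_{E}$) to obtain $\left(L_{1}\otimes\mathcal{O}_{F}\right)/\left(L_{2}\otimes\mathcal{O}_{F}\right)\cong\left(L_{1}/L_{2}\right)\otimes_{\mathcal{O}_{E}}\mathcal{O}_{F}\cong\mathcal{O}_{F}/\mathfrak{m}_{F}^{a}\oplus\mathcal{O}_{F}/\mathfrak{m}_{F}^{b}$; so the invariant factors are unchanged and $d_{B_{G}}\!\left(\iota[L_{1}],\iota[L_{2}]\right)=a-b=d_{B_{H}}\!\left([L_{1}],[L_{2}]\right)$. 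In particular $\iota$ sends edges to edges (the case $a=1$, $b=0$), which is also what makes $\iota$ a morphism of graphs in the first place. Injectivity comes from the same observation: an $\mathcal{O}_{E}$-basis of $L$ is an $\mathcal{O}_{F}$-basis of $L\otimes\mathcal{O}_{F}$, so $\left(L\otimes_{\mathcal{O}_{E}}\mathcal{O}_{F}\right)\cap E^{2}=L$, and therefore $\iota[L_{1}]=\iota[L_{2}]$ forces $L_{1}=t^{k}L_{2}$ for the relevant $k\in\Z$.

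With $\iota$ an injective isometric embedding I would conclude as follows. Fix $v=\iota(\tilde v)\in Z$. Any $w\in Z$ adjacent to $v$ in $B_{G}$ is of the form $w=\iota(\tilde w)$ with $1=d_{B_{G}}(v,w)=d_{B_{H}}(\tilde v,\tilde w)$, so $\tilde w$ runs over exactly the $q+1$ neighbours of $\tilde v$ in the $(q+1)$-regular tree $B_{H}$; conversely each such neighbour of $\tilde v$ maps to a vertex of $Z$ at distance $1$ from $v$, and these $q+1$ images are distinct by injectivity. Hence $v$ has precisely $q+1$ neighbours in $Z$, as claimed.

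The only step carrying real content is the unramifiedness input $\left(\mathcal{O}_{E}/\mathfrak{m}_{E}^{k}\right)\otimes_{\mathcal{O}_{E}}\mathcal{O}_{F}\cong\mathcal{O}_{F}/\mathfrak{m}_{F}^{k}$ together with the bookkeeping of invariant factors under base change; this is exactly what separates the present case from the ramified extension $\F_{q}\left(\left(t^{2}\right)\right)\subset\F_{q}\left(\left(t\right)\right)$ used earlier, where each edge of $B_{H}$ becomes a path of length $2$ in $B_{G}$ and $Z$ is a subdivision rather than an induced copy of $B_{H}$. Everything else is formal.
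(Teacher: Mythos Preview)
Your proof is correct, but it takes a more structural route than the paper's. The paper works with the explicit matrix coordinates from Subsection~\ref{subsec:Bruhat-Tits Buildings}: it writes down the finite set $N$ of matrices giving the neighbours of the base vertex in $B_G$, observes that the corresponding set $N_H$ for $B_H$ is literally a subset of $N$ (same matrices, but with the off-diagonal entry in $\F_q$ rather than $\F_{q^2}$), and checks that the map $F:B_H\to B_G$ commutes with right multiplication by elements of $N_H$. This immediately shows that adjacent vertices in $B_H$ go to adjacent vertices in $B_G$, hence every vertex of $Z$ has at least $q+1$ neighbours in $Z$; the paper defers the ``exactly $q+1$'' statement to the proof of Theorem~\ref{thm:edge expansion}, where it is extracted from Kahale's edge-expansion bound and a symmetry argument.

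Your approach instead proves directly that $\iota$ is an \emph{isometric} embedding, by tracking invariant factors under base change along the unramified extension $\mathcal{O}_E\hookrightarrow\mathcal{O}_F$. This is cleaner conceptually, isolates the exact algebraic input (unramifiedness, in contrast with the ramified case used for vertex expansion), and yields ``exactly $q+1$'' immediately without the detour through Kahale's theorem. The paper's argument is shorter and more hands-on once the matrix coordinates are in place; yours is more portable and would generalise with no change to any unramified extension of local fields.
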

\begin{figure}
\begin{tikzpicture}

\tikzstyle{leftnode}=[draw= blue!80,text=blue!80, shape = circle, fill = blue!80,inner sep=2pt];
\tikzstyle{leftlabel} = [text = blue!80]
\tikzstyle{rightnode}=[draw= black!80,text=black!80, shape = circle, fill = black!80,inner sep=2pt];
\tikzstyle{rightlabel} = [text = black!80]

\node[leftnode,label={[leftlabel,left]$\boldsymbol{\left(\begin{array}{cc}1 & 0\\ & t^2\end{array}\right)}$}] (u0) at (0,0.5) {};
\node[leftnode,label={[leftlabel,left]$\boldsymbol{\left(\begin{array}{cc}1 & 0\\ & t\end{array}\right)}$}] (u1) at (0,2) {};
\node[leftnode,label={[leftlabel,left]$\boldsymbol{\left(\begin{array}{cc}1 & 0\\ & 1\end{array}\right)}$}] (u2) at (0,4) {};
\node[leftnode,label={[leftlabel,right]$\boldsymbol{\left(\begin{array}{cc}t & 1\\ & t\end{array}\right)}$}] (u3) at (1,4) {};
\node[leftnode,label={[leftlabel,left]$\boldsymbol{\left(\begin{array}{cc}t & 0\\ & 1\end{array}\right)}$}] (u4) at (0,6) {};
\node[leftnode,label={[leftlabel,above]$\boldsymbol{\left(\begin{array}{cc}t & 1\\ & 1\end{array}\right)}$}] (u5) at (1,6) {};

\draw[draw = blue!80, very thick]
(u0) -- (u1)
(u1) -- (u2)
(u1) -- (u3)
(u2) -- (u4)
(u2) -- (u5)
;

\node[leftnode,label={[leftlabel,left]$\boldsymbol{\left(\begin{array}{cc}1 & 0\\ & t^2\end{array}\right)}$}] (v0) at (6,0.5) {};
\node[leftnode,label={[leftlabel,left]$\boldsymbol{\left(\begin{array}{cc}1 & 0\\ & t\end{array}\right)}$}] (v1) at (6,2) {};
\node[leftnode,label={[leftlabel,left]$\boldsymbol{\left(\begin{array}{cc}1 & 0\\ & 1\end{array}\right)}$}] (v2) at (6,4) {};
\node[leftnode,label={[leftlabel,right]$\boldsymbol{\left(\begin{array}{cc}t & 1\\ & t\end{array}\right)}$}] (v3) at (7,4) {};
\node[leftnode,label={[leftlabel,left]$\boldsymbol{\left(\begin{array}{cc}t & 0\\ & 1\end{array}\right)}$}] (v4) at (6,6) {};
\node[leftnode,label={[leftlabel,above]$\boldsymbol{\left(\begin{array}{cc}t & 1\\ & 1\end{array}\right)}$}] (v5) at (7,6) {};
\node[rightnode,label={[rightlabel,right]$\boldsymbol{\left(\begin{array}{cc}t & \alpha\\ & t\end{array}\right)}$}] (v6) at (9,3.5) {};
\node[rightnode,label={[rightlabel,below = 8pt]$\boldsymbol{\left(\begin{array}{cc}t & \alpha + 1\\ & t\end{array}\right)}$}] (v7) at (11,3) {};
\node[rightnode,label={[rightlabel,above]$\boldsymbol{\left(\begin{array}{cc}t & \alpha\\ & 1\end{array}\right)}$}] (v8) at (9,6) {};
\node[rightnode,label={[rightlabel,below = 12pt]$\boldsymbol{\left(\begin{array}{cc}t & \alpha + 1\\ & 1\end{array}\right)}$}] (v9) at (10.5,6) {};

\draw[draw = blue!80, very thick]
(v0) -- (v1)
(v1) -- (v2)
(v1) -- (v3)
(v2) -- (v4)
(v2) -- (v5)
;
\draw[very thick]
(v1) -- (v6)
(v1) -- (v7)
(v2) -- (v8)
(v2) -- (v9)
;
\end{tikzpicture}

\caption{\label{fig:edge expansion}Part of the tree of $\protect\PGL_{2}\left(\protect\F_{2}\left(\left(t\right)\right)\right)$
(left) embedded in part of the tree of $\protect\PGL_{2}\left(\protect\F_{4}\left(\left(t\right)\right)\right)$
(right). We denote $\protect\F_{4}=\left\{ 0,1,\alpha,\alpha+1\right\} $.}
\end{figure}

When projected to a finite quotient, the image $Y$ of $Z$ still has an induced degree of at least $q+1$. The closed orbit method allows us to find an arithmetic lattice such that this projection is small.

\subsection{Concentrated eigenfunctions of number theoretic graphs}\label{subsec:quantum ergodicity intro}

The closed orbit machinery could be useful beyond the specific question of expansion.
Indeed, we use this idea to construct an eigenfunction with eigenvalue 0, which has small support, in a number theoretic graph.

There is a lot of recent work, initiated by Brooks and Lindenstrauss
(\cite{brooks2013non}), whose aim is to understand eigenfunctions of the adjacency operator on $\left(q+1\right)$-regular graphs. Similar to the setting of vertex expansion, eigenfunctions on a $\left(q+1\right)$-regular 
graph with girth at least $\beta\log_{q}n$ have support of size at least $\Theta\left(n^{\frac{\beta}{4}}\right)$ (see \cite[Subsection 1.1]{ganguly2018non}).
Therefore, the support of eigenfunctions on the graphs $X$ of Theorem~\ref{thm:vertex expansion intro} is of size at
least $n^{1/3-o\left(1\right)}$. More generally, Brooks and Lindenstrauss
(\cite{brooks2013non}) proved that for a $\left(q+1\right)$-regular
graph $X$ with girth $\beta\log_{q}n$, for every $\epsilon>0$ there
is $\delta>0$ such that if a set $Y$ supports $\epsilon$ of the
mass of an eigenfunction $f$ (where the eigenfunction is normalized
to $\n f_{2}=1$ and the mass is determined by $\left|f\right|^{2}$),
then $\left|Y\right|\ge\Omega_{\epsilon}\left(n^{\delta}\right)$.
This was improved by Ganguly and Srivastava (\cite{ganguly2018non})
to $\left|Y\right|=\Omega\left(\epsilon n^{\epsilon\beta/4}\right)$.

Recently, Alon, Ganguly and Srivastava (\cite{alon2021high}), extending the results of Ganguly and Srivastava (\cite{ganguly2018non}), constructed a family of $(q+1)$-regular graphs of high girth, with many eigenfunctions of small support, of eigenvalues that are dense in $\left(-2\sqrt{q},2\sqrt{q}\right)$.
Their graphs have second eigenvalue bounded by $\frac{3}{\sqrt{2}}\sqrt{q}\approx2.121\sqrt{q}$, which is close to being Ramanujan.

As for our contribution, let $X,Y$ be the graphs from Theorem~\ref{thm:vertex expansion intro}, with $X$ identified with $\PGL_2(\F_{q^{2m}})$ and $Y$ identified with $\PGL_2(\F_{q^{m}})$.
Let $f\colon X\to\C$ be
\begin{align*}
f\left(x\right) & =\begin{cases}
+1 & x\in\PSL_{2}\left(\F_{q^{m}}\right)\\
-1 & x\in\PGL_{2}\left(\F_{q^{m}}\right)-\PSL_{2}\left(\F_{q^{m}}\right)\\
0 & x\notin\PGL_{2}\left(\F_{q^{m}}\right)
\end{cases}.
\end{align*}
Recall that $Y$ is a bipartite graph. The function $f$ is simply the function giving the value $+1$ to one part of $Y$, the value $-1$ to the other part of $Y$, and the value $0$ for the vertices in $X-Y$.

\begin{thm}[Number theoretic Ramanujan graphs with concentrated eigenfunctions]\label{thm:quantum ergodicity}
The function $f\in L^{2}\left(X\right)$ is an eigenfunction of the
adjacency operator $A$ of $X$ with eigenvalue $0$.

Therefore, for every odd prime power $q$ there exists a $(q+1)$-regular number theoretic Ramanujan graph $X$ of girth greater than $4/3\log_q(|X|)$, with an eigenfunction of the adjacency operator of eigenvalue 0, which is supported on $O(\sqrt{|X|})$ vertices.
\end{thm}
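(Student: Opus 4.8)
The plan is to verify directly that $Af = 0$, using the explicit combinatorial structure provided by Theorem~\ref{thm:Exlicit theorem}. Recall that $X = \Cayley\!\left(\PGL_2(\F_{q^{2m}}), \{\gamma_1,\dots,\gamma_{q+1}\}\right)$ is $(q+1)$-regular and bipartite, with the bipartition given by the index-two subgroup $\PSL_2(\F_{q^{2m}})$ and its nontrivial coset; the subset $Y$ on which $f$ is supported is the copy of $\PGL_2(\F_{q^m}) = \langle \gamma_1^2,\dots,\gamma_{q+1}^2\rangle$ inside $X$, and by the last sentence of Theorem~\ref{thm:Exlicit theorem} every $x \in N(Y)$ has precisely two neighbors in $Y$. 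So for any vertex $x$ of $X$ we must show $(Af)(x) = \sum_{i=1}^{q+1} f(x\gamma_i^{\pm 1}) = 0$, where the sum runs over the $q+1$ neighbors of $x$.

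First I would split into cases according to where $x$ sits. If $x \notin N(Y)$, then $x$ has no neighbor in $Y = \operatorname{supp}(f)$, so every term in $(Af)(x)$ vanishes and we are done. If $x \in N(Y) \setminus Y$, then $x$ has exactly two neighbors in $Y$; the key claim is that these two neighbors lie in \emph{opposite} parts of the bipartition of $Y$ — that is, one is in $\PSL_2(\F_{q^m})$ and the other in $\PGL_2(\F_{q^m}) \setminus \PSL_2(\F_{q^m})$ — so that $f$ takes the values $+1$ and $-1$ on them and they cancel, giving $(Af)(x) = 0$. The remaining case is $x \in Y$: here I would again argue that the neighbors of $x$ lying in $Y$ come in $+1/-1$ pairs, or more precisely that $x$ has exactly one neighbor in the \emph{opposite} part of $Y$'s bipartition (since within the subgraph $Y$, which is itself $(q+1)$-regular and bipartite by Theorem~\ref{thm:Exlicit theorem}, $x$ has all $q+1$ of its $Y$-neighbors... wait — that cannot be, since then $x$ would have no neighbors outside $Y$). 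Let me restate: since $Y$ is a $(q+1)$-regular induced... no, $Y$ is a $(q+1)$-regular \emph{graph} on the generating set $\{\gamma_i^2\}$ but as a subset of $X$ it is not induced with those edges; the edges of $X$ incident to $x \in Y$ are the $\gamma_i^{\pm1}$, and one must count how many land in $Y$. The honest approach here is to use the structure of Figure~\ref{fig:ramified extension tree}: in the tree $B_G$, the embedded set $Z$ (the subdivided tree of $B_H$) has the property that each of its vertices of the "original" type has all $q+1$ tree-neighbors being midpoints of subdivided edges, none of which are in $Z$ — so in fact a vertex of $Z$ has \emph{zero} neighbors in $Z$, and a midpoint vertex has exactly two. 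Pushing this down, $x \in Y$ has zero neighbors in $Y$, so $(Af)(x) = 0$ trivially; $x \in N(Y)\setminus Y$ has exactly two, with opposite signs.

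The technical heart, then, is the sign claim: when $x \in N(Y) \setminus Y$ has two neighbors $y_1, y_2 \in Y$, they have opposite parities in $\PGL_2(\F_{q^m})/\PSL_2(\F_{q^m})$. I would prove this by tracking the bipartition structure through the embedding. Both $X$ and $Y$ are bipartite with the bipartition detected by a $\Z/2$-valued "color" (e.g.\ the parity of the distance to a fixed base vertex in the tree, equivalently whether the determinant, suitably normalized, is a square). A vertex $x \in N(Y)\setminus Y$ corresponds to a midpoint of a subdivided edge of $B_H$; its two $Z$-neighbors are the two endpoints of that edge in $B_H$, which lie at tree-distance $1$ from each other in $B_H$ and hence at distance $2$ in the subdivided picture — one step apart in $B_H$ means opposite colors in $B_H$, and this opposite-ness is exactly the statement that $y_1 \in \PSL_2$ iff $y_2 \notin \PSL_2$. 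I expect the main obstacle to be bookkeeping the "change of variables" $t \mapsto t^2$ correctly so that the bipartition class of a vertex of $Y$ inside $X$ (governed by $\F_{q^{2m}}$-data) is faithfully read off from its bipartition class inside $Y$ (governed by $\F_{q^m}$-data); once the dictionary of Subsection~\ref{subsec:Bruhat-Tits Buildings} is in hand this should be a short check. Finally, the support bound $|\operatorname{supp}(f)| = |Y| = O(\sqrt{|X|})$ and the girth bound are immediate from Theorem~\ref{thm:Exlicit theorem}, and $f \neq 0$, so $0$ is genuinely an eigenvalue with a concentrated eigenfunction.
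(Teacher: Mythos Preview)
Your argument is correct in outline, but the paper takes a much shorter route that sidesteps the case analysis entirely. The paper's observation is this: the generating set $\{\gamma_1,\dots,\gamma_{q+1}\}$ is symmetric, so the neighbors of any $x\in X$ come in $(q+1)/2$ pairs $\{\gamma_k x,\,\gamma_k^{-1}x\}$. If $\gamma_k x \in Y$, then $\gamma_k^{-1}x = \gamma_k^{-2}(\gamma_k x) = \delta_k^{-1}(\gamma_k x) \in Y$ as well, since $\delta_k^{-1}$ lies in the subgroup $Y$; moreover $\gamma_k x$ and $\gamma_k^{-1}x$ differ by the generator $\delta_k^{-1}$ of the \emph{bipartite} Cayley graph $Y$, so they lie in opposite halves of $\PGL_2(\F_{q^m})$ and $f$ takes opposite signs on them. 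Thus each pair contributes $0$ to $(Af)(x)$ regardless of where $x$ sits --- all three of your cases at once.

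By contrast, your route leans on the ``precisely two neighbors'' clause of Theorem~\ref{thm:Exlicit theorem}, whose proof (Lemma~\ref{lem:Symmetry lemma}) invokes Kahale's spectral bound; the paper's pairing argument is completely elementary and self-contained. There is also a small soft spot in your $x\in Y$ case: the assertion that $x\in Y$ has zero $X$-neighbors in $Y$ does not follow just by ``pushing down'' the tree picture, since quotients can create new adjacencies. The clean fix is to note that $\PGL_2(\F_{q^m})\subset \PSL_2(\F_{q^{2m}})$ (for $q$ odd every element of $\F_{q^m}^\times$ is a square in $\F_{q^{2m}}^\times$), so $Y$ sits entirely in one side of the bipartite graph $X$ and carries no internal $X$-edges. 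With that patched your proof goes through; the paper's pairing simply makes the issue disappear.
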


\begin{proof}
Let $x\in X$. Notice that if $\gamma_{k}x\in Y$, then also $\gamma_{k}^{-1}x=\gamma_{k}^{-2}\gamma_{k}x\in Y$.
Moreover, $\gamma_{k}x$ and $\gamma_{k}^{-1}x$ are on different
parts of $Y$, as they differ by $\gamma_{k}^2$ which is a generator of $Y$ as a Cayley graph, and $Y$ is bipartite. Therefore, the total number of $+1$ contributions to
$\left(Af\right)\left(x\right)$ is equal to the total number
of $-1$ contributions to $\left(Af\right)\left(x\right)$. Therefore
$\left(Af\right)\left(x\right)=0$.
\end{proof}

Notice that after normalization our eigenfunction to $\n f_{2}=1$,
we have $\n f_{\infty}=\Omega\left(n^{-1/4}\right)$. By moving the
eigenfunction with the automorphisms of the Cayley graph, we actually
get $\Theta\left(\sqrt{n}\right)$ such functions. We are not familiar with any similar construction of an explicit non-trivial eigenfunction on number-theoretic graphs. However, our method is limited to the eigenvalue $0$.

We remark that our method is similar to the work of Mili\'{c}evi\'{c} about large values of eigenfunctions of arithmetic hyperbolic $3$-manifolds (\cite{milicevic2011large}), and also to the earlier work of Rudnick and Sarnak (\cite{rudnick1994behaviour}). Their methods show that an automorphic eigenfunction can have a large supremum-norm at closed orbits of smaller subgroups. The work \cite{milicevic2011large}, in particular, uses subgroups coming from field extensions. 
The main difference is that in our combinatorial setting we can explicitly construct the eigenfunction, and this eigenfunction is not automorphic in the sense that it is not an eigenfunction of the other Hecke operators that act on the space. However, the eigenvalue $0$ can be perhaps explained by the existence of an automorphic lift from the smaller group. It will be interesting to clarify this. 
Finally, it will be interesting to apply the methods of \cite{rudnick1994behaviour,milicevic2011large} to graphs, as they may prove the existence of more general eigenfunctions with a large supremum norm.

\subsection{The Closed Orbit Method}\label{subsec: closed orbit intro}

In the following, we explain the closed orbit method more accurately and state our abstract theorem about it. 

Let $G$ be a locally compact group, $H\le G$ a closed subgroup and $\Gamma\le G$ a cocompact lattice. For $\Gamma x \in \Gamma \backslash G$, we may look at the $H$-orbit $\Gamma x H \subset \Gamma \backslash G$. The $H$-action defines a map 
\[
\tilde{F}_\Gamma : \Gamma_{x,H} \backslash H \to \Gamma \backslash G,
\]
where $\Gamma_{x,H} = x^{-1}\Gamma x \cap H$, given by sending $\Gamma_{x,H}h$ to $\Gamma xh$. Its image is $\Gamma x H \subset \Gamma \backslash G$.

An $H$-orbit, which is the image of this map, can be quite complicated topologically. However, when $\Gamma_{x,H}$ is a lattice in $H$, the map becomes much simpler, and in particular, it becomes a topological embedding, and its image is closed. We will focus of the case when $x= e \in G$ is the identity, and denote $\Gamma_H = \Gamma_{e,H}= \Gamma \cap H$. 
For our combinatorial purposes, we move from the group $G$ itself to a discrete space. We assume that $G$ and $H\le G$ are semisimple $p$-adic groups, and let $K\le G$ be a compact open subgroup. The space $G/K$ is a discrete space with a $G$-action, which is closely related to the Bruhat-Tits building $B_G$ of $G$. For simplicity, we will work with the space $G/K$ instead of the Bruhat-Tits building $B_G$. The left $H$-action on $G/K$ defines an embedding 
\[
  H/K_{H}\to G/K,
\]
where $K_H = H\cap K$. 

The reader may restrict herself to the case when $G = \PGL_n\left(\F_q((t))\right)$, $H = \PGL_n\left(\F_q((t^2))\right)$, $K= \PGL_n\left(\F_q[[t]]\right)$ and $K_H= \PGL_n\left(\F_q[[t^2]]\right)$. Then $G/K$ and $H/K_H$ may be identified with the vertices of the Bruhat-Tits buildings $B_G$ and $B_H$.

When we insert $\Gamma$ again into the picture, we get a map of discrete spaces
\[
F_{\Gamma}:\Gamma_H\backslash H/K_{H}\to\Gamma\backslash G/K.
\]
When $\Gamma_H$ is a lattice in $H$, this is a map between two finite combinatorial objects. 

For the applications, we want two properties: First, $\Gamma_H$ should indeed be a lattice in $H$. Second, we want $\Gamma_H\backslash H/K_{H}$ to be as small as possible relative to $\Gamma\backslash G/K$. 

To achieve the two properties we turn to number theory. Our general method will be:
\begin{enumerate}
    \item Construct an arithmetic lattice $\Gamma \le G$ such that $\Gamma_H $ is a lattice in $H$.
    \item Take congruence covers $\Gamma_n$ of $\Gamma$, such that the index $\left[\Gamma_H:\Gamma_{n}\cap \Gamma_H\right]$ will grow far slower than the index $\left[\Gamma:\Gamma_{n}\right]$. 
\end{enumerate}

We implement the above when $H$ and $G$ are related by field extension. The actual details are based on the theory of semisimple groups over adelic rings, and is done in Section~\ref{sec:The-Method}. Here is a non-precise version of our general abstract theorem. A precise
version is given in Theorem~\ref{thm:Good Pair Theorem}.

\begin{thm}[The Closed Orbit Method] 
\label{thm:Abstract theorem} Assume that $k_{0}$ is a non-Archimedean
local field, $l_{0}$ is a finite field extension of $k_{0}$ and
$\boldsymbol{G}$ is a semisimple algebraic group defined over $k_{0}$.
Let $H=\boldsymbol{G}\left(k_{0}\right)\le G=\boldsymbol{G}\left(l_{0}\right)$.
Then in many cases described in Section~\ref{sec:The-Method}, we
may choose a cocompact arithmetic lattice $\Gamma\le G$, such that
$\Gamma_H = \Gamma\cap H\le H$ is also a cocompact arithmetic lattice. Moreover,
we may choose a sequence $\left\{ \Gamma_{n}\right\} $ of principal
congruence subgroups of $\Gamma$ of growing index, such that

\[
\left[\Gamma_H:\Gamma_{n}\cap \Gamma_H\right]=O\left(\left[\Gamma:\Gamma_{n}\right]^{1/\left[l_{0}:k_{0}\right]}\right).
\]
Therefore, it holds that $\left|(\Gamma_{n}\cap \Gamma_H)\backslash H / K_H \right|=O\left(\left|\Gamma_{n}\backslash G / K\right|^{1/\left[l_{0}:k_{0}\right]}\right)$.
We conclude that we can construct a map between a small combinatorial object and a large combinatorial object, which locally looks like the embedding of $H/K_H$
in $G/K$.
\end{thm}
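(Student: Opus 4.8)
The plan is to realize the inclusion $H\le G$ globally and then exploit the fact that raising the congruence level is $[l_{0}:k_{0}]$ times as expensive in $G$ as in $H$. \textbf{Step 1 (globalization).} I would choose a global field $k$ with a place $v_{0}$ such that $k_{v_{0}}=k_{0}$, and a separable extension $l/k$ of degree $[l_{0}:k_{0}]$ having a \emph{unique} place $w_{0}$ above $v_{0}$ with $l_{w_{0}}=l_{0}$ (such $l$ exists by weak approximation: take a polynomial over $k$ cutting out $l_{0}/k_{0}$ locally at $v_{0}$ and separable, suitably controlled, at the remaining relevant places). Next I would produce a $k$-form $\boldsymbol{G}'$ of semisimple groups with $\boldsymbol{G}'\times_{k}k_{v_{0}}\cong\boldsymbol{G}$ that is \emph{$l$-anisotropic} (equivalently $k$-anisotropic) and in addition has $\boldsymbol{G}'(l_{w})$ compact for every place $w$ of $l$ over the archimedean (resp.\ the fixed place $\infty$, in the function-field case) places of $k$. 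For inner forms of type $A_{n-1}$ this amounts to a central division algebra over $k$ with the prescribed local invariant at $v_{0}$ and invariants of full order $n$ at the archimedean/$\infty$ places, possible exactly when these invariants extend to a sum-zero family; this is the content of the ``in many cases'' clause. With $S=\{v_{0}\}\cup\{\text{archimedean / }\infty\text{ places}\}$ and $S_{l}$ the places of $l$ over $S$, set $\Gamma=\boldsymbol{G}'(\mathcal{O}_{l}[S_{l}^{-1}])$, the $S_{l}$-integer points. Since $\boldsymbol{G}'_{l}$ is $l$-anisotropic, $\Gamma$ is a cocompact $S_{l}$-arithmetic lattice in $\prod_{w\in S_{l}}\boldsymbol{G}'(l_{w})$, and since every factor other than the $w_{0}$-one is compact, its image in $\boldsymbol{G}'(l_{w_{0}})=G$ is a cocompact arithmetic lattice $\Gamma\le G$.

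\textbf{Step 2 (the intersection is a lattice in $H$).} I claim $\Gamma\cap H=\boldsymbol{G}'(\mathcal{O}_{k}[S^{-1}])=:\Gamma_{H}$. Because $w_{0}$ is the only place of $l$ over $v_{0}$ and $l/k$ is separable, $l\otimes_{k}k_{v_{0}}=l_{w_{0}}$ is a field, so $l$ and $k_{v_{0}}$ are linearly disjoint over $k$ inside $l_{w_{0}}$ and $l\cap k_{v_{0}}=k$. Hence, fixing an affine embedding over $k$, a point of $\boldsymbol{G}'$ with coordinates in both $l$ and $k_{v_{0}}$ has coordinates in $k$, i.e.\ $\boldsymbol{G}'(l)\cap\boldsymbol{G}'(k_{v_{0}})=\boldsymbol{G}'(k)$; imposing integrality away from $S_{l}$, which for a $k$-point coincides with integrality away from $S$ (as $\mathfrak{q}\mathcal{O}_{l}\cap\mathcal{O}_{k}=\mathfrak{q}$ for a prime $\mathfrak{q}\notin S$), gives the claim. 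As $\boldsymbol{G}'$ is $k$-anisotropic and $\boldsymbol{G}'(k_{v})$ is compact for $v\in S\setminus\{v_{0}\}$ (being closed in the compact $\boldsymbol{G}'(l_{w})$), $\Gamma_{H}$ is a cocompact arithmetic lattice in $H$.

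\textbf{Step 3 (playing off the levels).} For a prime $\mathfrak{p}$ of $k$ with $\mathfrak{p}\notin S$, let $\Gamma_{\mathfrak{p}}=\ker(\Gamma\to\boldsymbol{G}'(\mathcal{O}_{l}/\mathfrak{p}\mathcal{O}_{l}))$. By strong approximation for the simply connected cover (valid since $G$ is non-compact, so the cover is $l_{w_{0}}$-isotropic), together with the fact that passing between $\boldsymbol{G}'$ and its cover changes indices by a bounded factor, $[\Gamma:\Gamma_{\mathfrak{p}}]\asymp|\boldsymbol{G}'(\mathcal{O}_{l}/\mathfrak{p}\mathcal{O}_{l})|$. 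Using $|\boldsymbol{G}'(\mathcal{O}_{l}/\mathfrak{P}^{e})|\asymp(N\mathfrak{P})^{e\dim\boldsymbol{G}'}$ for $\mathfrak{P}\mid\mathfrak{p}$ (Lang--Steinberg plus lifting along the Lie algebra) and $\prod_{\mathfrak{P}\mid\mathfrak{p}}(N\mathfrak{P})^{e_{\mathfrak{P}}}=N(\mathfrak{p}\mathcal{O}_{l})=(N\mathfrak{p})^{[l:k]}$, one gets $[\Gamma:\Gamma_{\mathfrak{p}}]\asymp(N\mathfrak{p})^{[l:k]\dim\boldsymbol{G}'}$. On the other hand $\Gamma_{\mathfrak{p}}\cap\Gamma_{H}=\ker(\Gamma_{H}\to\boldsymbol{G}'(\mathcal{O}_{k}/\mathfrak{p}))$ (using $\mathfrak{p}\mathcal{O}_{l}\cap\mathcal{O}_{k}[S^{-1}]=\mathfrak{p}\mathcal{O}_{k}[S^{-1}]$), so $[\Gamma_{H}:\Gamma_{\mathfrak{p}}\cap\Gamma_{H}]\asymp(N\mathfrak{p})^{\dim\boldsymbol{G}'}$. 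Taking $\Gamma_{n}:=\Gamma_{\mathfrak{p}_{n}}$ along primes $\mathfrak{p}_{n}\notin S$ with $N\mathfrak{p}_{n}\to\infty$ (or the levels $\mathfrak{p}^{n}$) gives subgroups of growing index with $[\Gamma_{H}:\Gamma_{n}\cap\Gamma_{H}]=O([\Gamma:\Gamma_{n}]^{1/[l:k]})$, and $[l:k]=[l_{0}:k_{0}]$ since $w_{0}$ is the only place over $v_{0}$.

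\textbf{Step 4 (from indices to cardinalities; the main obstacle).} Since $K_{H}\le H$ is compact open and $\Gamma_{H}$ cocompact, $\Gamma_{H}\backslash H/K_{H}$ is finite of a fixed size, and for $n$ large the stabilizers $\Gamma_{H}\cap hK_{H}h^{-1}$ have bounded order; comparing the covering $(\Gamma_{n}\cap\Gamma_{H})\backslash H/K_{H}\to\Gamma_{H}\backslash H/K_{H}$ fiber by fiber yields $|(\Gamma_{n}\cap\Gamma_{H})\backslash H/K_{H}|\asymp[\Gamma_{H}:\Gamma_{n}\cap\Gamma_{H}]$, and likewise $|\Gamma_{n}\backslash G/K|\asymp[\Gamma:\Gamma_{n}]$. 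Combining with Step 3 gives $|(\Gamma_{n}\cap\Gamma_{H})\backslash H/K_{H}|=O(|\Gamma_{n}\backslash G/K|^{1/[l_{0}:k_{0}]})$, and the embedding $H/K_{H}\hookrightarrow G/K$ descends to the asserted map $F_{\Gamma_{n}}$. I expect the only genuine difficulty to be Step 1 — securing a global anisotropic $\boldsymbol{G}'$ that specializes to the prescribed $\boldsymbol{G}$ at $v_{0}$ while remaining compact at the auxiliary places, which is precisely the restriction hidden in ``in many cases''. Steps 2--4 are then essentially bookkeeping with linear disjointness, strong approximation, and ideal norms.
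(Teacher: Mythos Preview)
Your proposal is correct and follows the same overall architecture as the paper: globalize to a pair $k\subset l$ with an anisotropic $k$-form of $\boldsymbol{G}$, show that the $l$-arithmetic lattice intersects $H$ in the $k$-arithmetic lattice, compare congruence indices via $|\boldsymbol{G}'(\F)|\asymp |\F|^{\dim\boldsymbol{G}'}$, and pass to double coset counts. Your identification of Step~1 as the ``in many cases'' restriction is exactly what the paper formalizes in its Definition of a \emph{good pair}.

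Two points of comparison are worth noting. First, the paper works adelically (defining $\Gamma=P_{W_0}(\boldsymbol{G}(l)\cap G\boldsymbol{K_l})$) rather than with $S$-integer points; this is equivalent to your setup but makes the verification $\Gamma\cap H=\Gamma'$ a one-line computation with the adelic compact opens rather than an appeal to linear disjointness. Second, and more substantively, in Step~3 the paper does \emph{not} reduce modulo $\mathfrak{p}\mathcal{O}_l$ for a general prime $\mathfrak{p}$ of $k$. Instead it restricts to primes $v_1$ of $k$ that are \emph{inert} in $l$ (infinitely many exist by Chebotarev), so there is a single place $w_1$ above $v_1$, and takes the principal congruence subgroup at level $\pi_{w_1}$. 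This makes the index comparison immediate: $|\O_{w_1}/\pi_{w_1}|=|\O_{v_1}/\pi_{v_1}|^{[l:k]}$ directly, with no need for your norm identity $\prod_{\mathfrak{P}\mid\mathfrak{p}}(N\mathfrak{P})^{e_\mathfrak{P}}=(N\mathfrak{p})^{[l:k]}$ or for the Lang--Steinberg/lifting count at ramified primes. Your route is slightly more general (any $\mathfrak{p}\notin S$ works), while the paper's inert-prime trick is a clean simplification that also guarantees $\boldsymbol{G}(\O_{w_1})\cong\tilde{\boldsymbol{G}}(\O_{w_1})$ and avoids worrying about bad reduction. Both are valid.
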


\subsection{Vertex Expansion without Closed Orbits}\label{subsec:vertex expansion conjecture}

Our explicit construction is based on the graphs of Morgenstern (\cite{morgenstern1994existence})
and not on the more famous LPS graphs of Lubotzky, Phillips and Sarnak
(\cite{lubotzky1988ramanujan}). As a matter of fact, our method completely fails for LPS
graphs, since they are based on lattices in $\PGL_{2}\left(\Q_{p}\right)$,
$p$ prime, and $\Q_{p}$ has no closed subfields. In particular,
$\PGL_{2}\left(\Q_{p}\right)$ has no closed subgroup, which behave similarly to the closed subgroup $\PGL_{2}\left(\F_{q}\left(\left(t^{2}\right)\right)\right)$
of $\PGL_{2}\left(\F_{q}\left(\left(t\right)\right)\right)$.

More generally, the LPS graphs are constructed from quaternion algebras over $\Q$, and $\Q$ has no subfields, which prevents us from applying the method we explain in Section~\ref{sec:The-Method}.

While our results may suggest that the LPS graphs may also have bad vertex expansion, we believe that the results actually point in the other direction. One can perhaps use the lack of similar subgroups to show that the LPS graphs have good vertex expansion, although implementing this idea seems hard.

Therefore, we end the introduction with the following conjecture:
\begin{conjecture}[LPS graphs are lossless expanders]
Let $q$ be fixed and large, and $X_n$ be the family of $(q+1)$-regular Ramanujan graphs constructed in \cite{lubotzky1988ramanujan}. Then for every $\epsilon>0$, there is $n$ large enough such that for every set $Y \subset X_n$ with $|Y| \le |X_n|^{1-\epsilon}$, we have 
\[
|N(Y)| \ge (q+1 - o(q)) |Y|.
\]
\end{conjecture}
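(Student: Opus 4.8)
Since this is an open problem, what follows is a research plan rather than a proof, organized around the dichotomy that drives this paper: a set $Y$ with poor vertex expansion should reflect either \emph{local} structure (visible at the scale of the girth) or \emph{global algebraic} structure (a closed orbit of a subgroup), and for the LPS graphs neither mechanism should be available in the relevant range.

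The first step is the polynomially-small regime: for $|Y|\le |X_n|^{1/3-\epsilon}$ the LPS girth bound $g(X_n)\ge \tfrac{4}{3}\log_p |X_n|$ already forces $Y$, together with its first neighbourhood, into a union of trees, and the counting argument of Kahale recalled above (see also \cite{mckenzie2020high}) gives $|N(Y)|\ge (p+1-o(p))|Y|$. So the conjecture is really a statement about the intermediate range $|X_n|^{1/3}\le |Y|\le |X_n|^{1-\epsilon}$, exactly where the spectral method stalls at Kahale's $p/2$ barrier; note that even proving unique-neighbour expansion for LPS graphs in this range, i.e. replacing $q+1-o(q)$ by $q/2+1$, would already be new.

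For the intermediate range, I would argue by contradiction. Suppose there are $c>0$ and sets $Y_n\subset X_n$ with $|Y_n|\le |X_n|^{1-\epsilon}$ and $|N(Y_n)|\le (p+1-cp)|Y_n|$. Form the Benjamini--Schramm limit of the marked graphs $\left(X_n,\one_{Y_n}\right)$ rooted at a uniformly random vertex \emph{of} $Y_n$; since $X_n\to T_{p+1}$ locally and their systole tends to infinity, the limit is a unimodular random rooted subset $Z\subset T_{p+1}$, and the mass-transport principle turns the bad-expansion hypothesis into a bound on the expected boundary-to-bulk mass ratio of $Z$. The size constraint $|Y_n|\le |X_n|^{1-\epsilon}$ should translate into a quantitative thinness of $Z$ inside $T_{p+1}$ --- making this translation precise is itself part of the problem --- after which one is reduced to classifying thin unimodular configurations. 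Here one exploits that the relevant closed subgroups $H\le\PGL_2\left(\Q_p\right)$ are unhelpful: a split torus gives a geodesic line, a non-split torus or compact subgroup gives a bounded set, a unipotent gives a horosphere, and in a tree each of these has abundant unique neighbours; and when a subgroup does give a genuinely thin orbit, it fails to meet the arithmetic lattice in a lattice, because the LPS lattice comes from a quaternion algebra over $\Q$ ramified at $\infty$ (so $\Gamma$ is cocompact, with no nontrivial unipotents and no split tori) and because $\Q_p$ has no proper closed subfield, so the field-extension mechanism of Section~\ref{sec:The-Method} produces nothing.

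\textbf{The main obstacle} is that the limiting configuration $Z$ need not be algebraic at all: it could be a purely combinatorial thin subtree --- a blown-up analogue of Kahale's $4$-cycle example, or some self-similar pattern --- invariant under no subgroup $H$. Excluding these cannot be done with the adjacency spectrum alone, and seems to require a genuinely new input, most plausibly the Hecke operators at auxiliary primes $\ell\ne p$ acting on $X_n$: a bad set would have to be simultaneously compatible with infinitely many commuting Hecke correspondences, which the congruence structure of $\Gamma_n$ should forbid, in the spirit of super-approximation and Bourgain--Gamburd expansion along the congruence tower. Making any of this effective up to $|Y|\le |X_n|^{1-\epsilon}$ is the crux, and --- as the lossless-expander problem itself indicates --- is presently out of reach.
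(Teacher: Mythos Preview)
The statement is a \emph{conjecture}, and the paper offers no proof of it; it is posed precisely because the closed-orbit obstruction does not apply to LPS graphs, and the paper explicitly says that ``implementing this idea seems hard.'' You correctly recognise this and present a research outline rather than a proof, so there is nothing in the paper to compare your argument against.

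Your outline is in the spirit of the paper's own heuristic: the absence of a suitable closed subgroup $H\le\PGL_2(\Q_p)$ (no closed subfield of $\Q_p$, no nontrivial unipotents or split tori in the cocompact lattice) should mean that no bad set can arise from a closed orbit, and you propose to upgrade this to a genuine obstruction via a Benjamini--Schramm limit of the marked pair $(X_n,\one_{Y_n})$. Two points deserve caution. First, the passage from $|Y_n|\le|X_n|^{1-\epsilon}$ to ``thinness'' of the limit object $Z$ is delicate: rooting at a uniform point of $Y_n$ gives a unimodular random rooted subset of $T_{p+1}$, but the sublinearity of $|Y_n|$ is a global volume condition and does not obviously survive as a local property of $Z$; one can easily have $|Y_n|=o(|X_n|)$ yet $Z$ equal to the full tree almost surely. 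Second, and as you yourself flag, the real enemy is a non-algebraic limit --- a purely combinatorial $(q+1,2)$-biregular pattern in the tree that is not an $H$-orbit for any $H$. Ruling this out with Hecke operators at auxiliary primes is a natural idea, but nothing currently known in super-approximation or Bourgain--Gamburd type arguments gives control on vertex expansion at this scale; this is exactly where the problem is open.
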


\subsection*{Structure of this Article}

In Section~\ref{sec:Proof_of_Explicit_Theorem} we prove the explicit part of Theorem~\ref{thm:vertex expansion intro}, showing explicit number theoretic Ramanujan graphs with bad vertex expansion for odd prime powers. 
We assume the results of \cite{morgenstern1994existence}, the proof uses elementary number theory in function fields, and is independent of the rest of the paper.

In Section~\ref{sec:The-Method} we state and prove the precise version
of Theorem~\ref{thm:Abstract theorem}, which is our general theorem presenting the closed orbit method. The proof is based on the theory of semisimple groups over the adeles, and in particular on the strong approximation theorem.

In Section~\ref{sec:Division Algberas} we present the implications of the closed orbit method to vertex expansion and edge expansion.
We apply Theorem~\ref{thm:Abstract theorem} to division algebras,
discuss the Bruhat-Tits building, and prove Theorem~\ref{thm:edge expansion}
and Theorem~\ref{thm:Complexes}. We also prove the non-explicit part of Theorem~\ref{thm:vertex expansion intro}.

Finally, in Section~\ref{sec:Moore Bound},
we present simple proofs of Kahale's theorems, using the results
of~\cite{alon2002moore}. This section is independent of the other
sections.

\subsection*{Acknowledgement}
The authors wish to thank Joseph Bernstein, Alex Lubotzky, Sidhanth Mohanty, Peter Sarnak and Nikhil Srivastava for discussions surrounding this project. This work began when the first-named author was a PhD student in the Hebrew University of Jerusalem under Alex Lubotzky, and was supported by ERC grant 692854.

A shortened version of this worked appeared in the proceedings of STOC 2022.

\section{\label{sec:Proof_of_Explicit_Theorem}Explicit Number Theoretic Graphs with Bad Vertex Expansion}

The main goal of this section is to prove the explicit part (i.e., for \emph{odd} prime powers $q$) of Theorem~\ref{thm:vertex expansion intro}. The theorem is more than a special case of Theorem~\ref{thm:Abstract theorem}, since the lattices do not come from simply connected groups, as we assume (implicitly) in Theorem
~\ref{thm:Abstract theorem}. This allows us to construct very explicit Cayley graphs, but add another layer of complication, which we resolve using Morgenstern's results.

Let us first give a short explanation how the explicit construction fits into the general framework. Let $\Gamma=\langle \gamma_1 ,...,\gamma_{q+1} \rangle$ be a free group with $\gamma_1 ,...,\gamma_{q+1}$ as generators and their inverses. Let $\Gamma'$ be the subgroup of $\Gamma$ generated by $ \delta_1,...,\delta_{q+1} $, where $\delta_i = \gamma_i^2$. 

The Cayley graph $T_\Gamma = \Cayley(\Gamma,\{\gamma_1,...,\gamma_{q+1}\})$ is a $(q+1)$-regular tree. Similarly, $T_{\Gamma'} = \Cayley(\Gamma',\{\delta_1,...,\delta_{q+1}\})$ is also a $(q+1)$-regular tree. The embedding of $\Gamma'$ in $\Gamma$ gives an embedding of the vertices of $T_{\Gamma'}$ in $T_{\Gamma}$. Moreover, each edge in $T_{\Gamma'}$ corresponds to two edges in $T_{\Gamma}$, or alternatively, the embedding extends to a graph embedding of the $(q+1,2)$-biregular subdivision graph of $T_{\Gamma'}$ in $T_\Gamma$. We deduce that if $Z\subset T_\Gamma$ is the embedding of the vertices of $T_{\Gamma'}$ into $T_{\Gamma}$, then every vertex $v\in N(Z)$ is connected to two vertices of $Z$. This is a version of Lemma~\ref{lem: bad vertex expansion in tree} in our case.

Now, let $\Gamma_n$ be a finite index normal subgroup of $\Gamma$. Then we may look at the Cayley graph $X=\Cayley(\Gamma/\Gamma_n,\{\gamma_1,...,\gamma_{q+1}\}))$ (with the elements being identified with their image in $\Gamma/\Gamma_n)$. Alternatively, $X$ can be identified with the quotient of $T_\Gamma$ by $\Gamma_n$. There is a natural embedding $\Gamma'/(\Gamma'\cap \Gamma_n) \to \Gamma/\Gamma_n$. The image $Y$ of this embedding can be identified with the projection of $Z \subset T_\Gamma$ to $X$. We deduce that every neighbor of $Y$ is also connected to $Y$ by at least two edges. 

The problem is then to find a subgroup $\Gamma_n$ such that $|Y|$ will be much smaller than $|X|$, or alternatively $[\Gamma':\Gamma'\cap\Gamma_n]$ will be much smaller than $[\Gamma:\Gamma_n]$, which will be an explicit version of Lemma~\ref{lem:small volume in quotient}. 
During the proof, we will show that this holds for the Morgenstern graphs using explicit calculations.

It may be hard to identify the relation between the proof and the general theory, which involves $p$-adic groups. After setting some preliminaries we explain some of it in Remark~\ref{rem:explanation of explicit}, and later we explain another part of the connection in Subsection~\ref{subsec:explanation of explicit}. 

Throughout the proof, we freely use basic number theory in function
fields. See \cite{rosen2013number} for a good introduction to this
subject.

We start by recalling the construction of the Morgenstern Ramanujan graphs
(\cite{morgenstern1994existence}). Let $q$ be an odd prime power, with $\F_q$ the corresponding finite field. Consider the quaternion algebra
$A\left(\F_{q}\left(u\right)\right)$, which has a base $1,i,j,ij$
over $\F_{q}\left(u\right)$, with relations
\[
i^{2}=\epsilon,j^{2}=u-1,ij=-ji,
\]
where $\epsilon\in\F_{q}$ is a non-square. This algebra has a norm
\[
N\left(a+bi+cj+dij\right)=a^{2}-\epsilon b^{2}+\left(\epsilon d^{2}-c^{2}\right)\left(u-1\right).
\]

We let $A^{\times}\left(\F_{q}\left(u\right)\right)/Z^{\times}$ be
the quotient of the invertible elements of $A\left(\F_{q}\left(u\right)\right)$
by the equivalence condition $\alpha\sim\alpha'$ if and only if there
is $a\in\F_{q}\left(u\right)$ with $a\alpha=\alpha'$.

Denote by $A\left(\F_{q}\left[u\right]\right)$ the elements of $A$ with $a,b,c,d\in\F_{q}[u]$. There are $q+1$ elements $\left\{ \gamma_{1}^{\prime},...,\gamma_{q+1}^{\prime}\right\} \subset A\left(\F_{q}\left[u\right]\right)$,
satisfying
\[
\gamma_{k}^{\prime}=1+c_{k}j+d_{k}ij,
\]
with $c_{k},d_{k}\in\F_{q}$ and $N\left(\gamma_{k}^{\prime}\right)=u$.
Those elements correspond to the $q+1$ solutions of $\epsilon d^{2}-c^{2}=1$.

We let $S=\left\{ \gamma_{1},...,\gamma_{q+1}\right\} $ be the image
of those elements in $A^{\times}\left(\F_{q}\left(u\right)\right)/Z^{\times}$.
Finally, let $\Gamma$ be the group generated by $\gamma_{1},...,\gamma_{q+1}$.
\begin{thm}[{\cite[Corollary 4.7]{morgenstern1994existence}}]
$\Gamma$ is a free group on $\frac{q+1}{2}$ generators, with $\gamma_{1},...,\gamma_{q+1}$
as generators and their inverses. Moreover, there is a bijection between $\Gamma$ and the set
\begin{align*}
\Bigl\lbrace \alpha=a+bi+cj+dij\in A\left(\F_{q}\left[u\right]\right):&\exists  l\ge0,N\left(\alpha\right)=u^{l},\\
& u-1|\gcd\left(a-1,b\right),u\nmid\gcd\left(a,b,c,d\right)
\Bigr\rbrace.
\end{align*}
The bijection is given by choosing for every $\gamma\in \Gamma$ the unique element in its equivalence class in $A^\times(\F_q(u))$ from the set above.
\end{thm}


Now let $g\in\F_{q}\left[u\right]$ be an irreducible polynomial of
degree $2m$. Then the congruence subgroup $\Gamma\left(g\right)$
of $\Gamma$ is the set of elements of $\Gamma$ who have in their
equivalence class an element $a+bi+cj+dij\in A\left(\F_{q}\left[u\right]\right)$,
satisfying $g\nmid a,g|b,g|c,g|d$.

Recall that the Legendre symbol for $f,g\in\F_{q}\left(u\right)$,
$g\ne0$ irreducible is defined as
\begin{align*}
\left(\frac{f}{g}\right) & =\begin{cases}
0 & g|f\\
1 & f\text{ is a square\ensuremath{\ne0} }\operatorname{mod} g\\
-1 & \text{else}
\end{cases}\\
 & \equiv f^{\left(q^{\deg g}-1\right)/2}\Mod g.
\end{align*}

\begin{thm}[{\cite[Theorem 4.13]{morgenstern1994existence}}]
\label{thm:Morgenstern Theorem}The Cayley graph $X_{g}=\Cayley\left(\Gamma/\Gamma\left(g\right),\left\{ \gamma_{1},...,\gamma_{q+1}\right\} \right)$
is a $\left(q+1\right)$-regular Ramanujan graph.

There are two possibilities, depending on the Legendre symbol $\left(\frac{u}{g}\right)$:
\begin{enumerate}
\item If $\left(\frac{u}{g}\right)=-1$ then the group $\Gamma/\Gamma\left(g\right)$
is isomorphic to $\PGL_{2}\left(\F_{q^{2m}}\right)$, the graph $X_{g}$
is bipartite and its girth is at least $\frac{4}{3}\log_{q}\left(\left|X_{g}\right|\right)$.
\item If $\left(\frac{u}{g}\right)=1$ then the group $\Gamma/\Gamma\left(g\right)$
is isomorphic to $\PSL_{2}\left(\F_{q^{2m}}\right)$, the graph $X_{g}$
is not bipartite, and its girth is at least $\frac{2}{3}\log_{q}\left(\left|X_{g}\right|\right)$.
\end{enumerate}
\end{thm}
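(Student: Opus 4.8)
I would prove this by carrying the Lubotzky--Phillips--Sarnak strategy over to the rational function field $\F_q(u)$; every ingredient except the Ramanujan bound itself is elementary number theory in $\F_q[u]$. First I would note that, by the preceding corollary, $\Gamma$ is free on $\tfrac{q+1}{2}$ generators with $\{\gamma_1,\dots,\gamma_{q+1}\}$ a symmetric generating set, so $\Cayley(\Gamma,\{\gamma_1,\dots,\gamma_{q+1}\})$ is the $(q+1)$-regular tree. Concretely this is the Bruhat--Tits tree of $\PGL_2(\F_q((u)))$: since $A=(\epsilon,u-1)$ ramifies exactly at $(u-1)$ and at $\infty$, it is a division algebra split at the place $(u)$ and ``definite'' at $\infty$, so $\Gamma\subset A^\times(\F_q[u])/Z^\times$ is a torsion-free cocompact lattice of $\PGL_2(\F_q((u)))$ acting simply transitively on that tree. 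As $g$ is irreducible of degree $2m>1$ it is coprime to $u$, and by construction $\Gamma(g)$ is the kernel of reduction of the quaternion order modulo $g$, hence a normal subgroup; so $X_g=\Gamma(g)\backslash T_{q+1}$ is again $(q+1)$-regular, and everything reduces to understanding $\Gamma/\Gamma(g)$ and the Hecke operator at $(u)$.

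To identify $\Gamma/\Gamma(g)$: since $\epsilon$ and $u-1$ are units at $g$, the tame Hilbert symbol $(\epsilon,u-1)_g$ is trivial, so $A$ splits at $g$ and a maximal order reduces modulo $g$ to $M_2(\F_q[u]/(g))=M_2(\F_{q^{2m}})$; passing to units modulo center gives $\rho\colon\Gamma\to\PGL_2(\F_{q^{2m}})$ with $\ker\rho=\Gamma(g)$. The reduced norm of $\rho(\gamma_k)$ is the class of $N(\gamma_k)=u$, so the image of $\det\circ\rho$ in $\F_{q^{2m}}^\times/(\F_{q^{2m}}^\times)^2\cong\Z/2\Z$ is generated by $u\bmod g$; hence $\rho(\Gamma)\subseteq\PSL_2(\F_{q^{2m}})$ exactly when $\left(\tfrac{u}{g}\right)=1$. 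Surjectivity (onto $\PGL_2$ when $\left(\tfrac{u}{g}\right)=-1$, onto $\PSL_2$ when $\left(\tfrac{u}{g}\right)=1$) I would deduce from strong approximation for the simply connected group $\SL_1(A)$, which is $\F_q(u)$-almost-simple and isotropic at $(u)$; this is also what pins down $|X_g|$. Bipartiteness is then immediate: if $\left(\tfrac{u}{g}\right)=-1$ the homomorphism $\det\circ\rho\colon\Gamma/\Gamma(g)=\PGL_2(\F_{q^{2m}})\to\Z/2\Z$ is a proper $2$-colouring of $X_g$ since every generator reverses the colour; if $\left(\tfrac{u}{g}\right)=1$, $X_g$ is a connected Cayley graph of the simple group $\PSL_2(\F_{q^{2m}})$ (simple because $q^{2m}\ge9$), so it admits no bipartition and hence contains an odd cycle.

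The heart of the matter, and the part I expect to be hard, is the Ramanujan bound. The adjacency operator of $X_g$ is the Hecke operator at $(u)$ on $L^2\bigl(\Gamma(g)\backslash\PGL_2(\F_q((u)))/\PGL_2(\F_q[[u]])\bigr)$; decomposing this space into automorphic representations of $(A^\times/Z^\times)(\A_{\F_q(u)})$ and applying Jacquet--Langlands, each infinite-dimensional constituent matches a cuspidal automorphic representation of $\PGL_2$ over $\F_q(u)$, while the finite-dimensional ones that survive the level and sphericity conditions contribute only the eigenvalues $q+1$ (constants) and, in the bipartite case, $-(q+1)$ (the sign function). I would then invoke Drinfeld's theorem --- the Ramanujan--Petersson bound for cuspidal $\GL_2$ over function fields, obtained from the $\ell$-adic cohomology of Drinfeld modular curves --- to conclude that the local component at $(u)$ of such a representation is tempered, so the associated Hecke eigenvalue has absolute value at most $2\sqrt q$. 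Consequently every non-trivial eigenvalue of $X_g$ lies in $[-2\sqrt q,2\sqrt q]$. There is no soft route around this step: it is genuinely a case of the Weil conjectures.

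Finally the girth. Since $\Cayley(\Gamma,\{\gamma_1,\dots,\gamma_{q+1}\})$ is a tree, a cycle of length $\ell$ in $X_g$ lifts to a reduced --- hence nontrivial --- word $w=\gamma_{k_1}\cdots\gamma_{k_\ell}$ lying in $\Gamma(g)$; writing $w=a+bi+cj+dij\in A(\F_q[u])$ with $N(w)=u^\ell$, the fact that $A$ is ramified (``definite'') at $\infty$ bounds $\deg a,\deg b,\deg c,\deg d\le\tfrac{\ell}{2}+O(1)$ (the exact analogue of $\sum x_i^2=p^\ell\Rightarrow|x_i|\le p^{\ell/2}$). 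But $w\in\Gamma(g)$ forces $g\mid b,c,d$ with $\deg g=2m$, so once $\ell<4m-O(1)$ we get $b=c=d=0$, and then $a^2=u^\ell$ together with anisotropy of the norm form makes $w$ a scalar --- contradicting that $w$ is a nontrivial reduced word; this gives girth $\ge4m-O(1)\ge\tfrac23\log_q|X_g|$. In the bipartite case $\ell=2\ell'$ is even, and from $a^2\equiv u^{2\ell'}\pmod{g^2}$ with $\gcd(u,g)=1$ one gets $g^2\mid(a\mp u^{\ell'})$ unless $a=\pm u^{\ell'}$ (which again forces $w$ scalar); since $\deg a\le\ell'$, the surviving option needs $\ell\ge8m-O(1)$, yielding girth $\ge\tfrac43\log_q|X_g|$. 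Chasing the $O(1)$'s, as Morgenstern does, produces the stated constants exactly.
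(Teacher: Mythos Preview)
The paper does not give its own proof of this theorem: it is simply quoted from \cite[Theorem~4.13]{morgenstern1994existence} and used as a black box throughout Section~\ref{sec:Proof_of_Explicit_Theorem}. Your sketch is a faithful reconstruction of Morgenstern's argument, which in turn is the function-field transcription of Lubotzky--Phillips--Sarnak: the identification of $\Gamma/\Gamma(g)$ via the splitting of $A$ at $g$ and the Legendre symbol $\left(\tfrac{u}{g}\right)$, the appeal to strong approximation for $\SL_1(A)$ for surjectivity, the Ramanujan bound via Jacquet--Langlands together with Drinfeld's theorem, and the girth bound via degree estimates coming from definiteness of the norm form at $\infty$ are all exactly what Morgenstern does.

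One small point worth making explicit in your bipartite girth step: when you write ``unless $a=\pm u^{\ell'}$ (which again forces $w$ scalar)'', you are using that the pure part $\epsilon b^{2}-(u-1)(\epsilon d^{2}-c^{2})$ of the norm form is anisotropic at $\infty$. Concretely, since $\epsilon$ is a non-square in $\F_q$, the two blocks $a^{2}-\epsilon b^{2}$ and $(u-1)(\epsilon d^{2}-c^{2})$ have top terms of opposite degree parity, so $a=\pm u^{\ell'}$ forces $b=c=d=0$ and hence $w$ is central. With that detail supplied, and noting that the degree bounds $\deg a,\deg b\le \ell/2$, $\deg c,\deg d\le(\ell-1)/2$ hold on the nose (no $O(1)$), the argument goes through and matches Morgenstern's.
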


We now consider the $q+1$ elements $S'=\left\{ \delta_{1},...,\delta_{q+1}\right\} \subset A^{\times}\left(\F_{q}\left(u\right)\right)/Z^{\times}$
satisfying $\delta_{k}=\gamma_{k}^{2}$, so explicitly
\[
\delta_{k}=2-u+2c_{k}j+2d_{k}ij.
\]
Let $\Gamma'=\left\langle \delta_{1},...,\delta_{q+1}\right\rangle \le\Gamma$.

To prove Theorem~\ref{thm:vertex expansion intro} we need to understand
the group $\Gamma'/\left(\Gamma'\cap\Gamma\left(g\right)\right)$
and its Cayley structure relative to the generators $\delta_{1},...,\delta_{q+1}$.

It is simpler to work with valuations instead of divisions. Recall
that the valuations of the field $\F_{q}\left(u\right)$ are $v_{1/u}$
defined by $v_{1/u}\left(\frac{f}{g}\right)=\deg_{u}g-\deg_{u}f$,
$f,g\in\F_{q}\left[u\right]$, and for every irreducible monic polynomial
$p\in\F_{q}\left(u\right)$ the valuation $\nu_{p}\left(p^{a}\frac{f}{g}\right)=a$,
where $f,g\in\F_{q}\left(u\right)$ are not divisible by $p$.

Using the language of valuations, $\Gamma\left(g\right)$ contains
all the elements of $A^{\times}\left(\F_{q}\left(u\right)\right)/Z^{\times}$,
which are in the free group generated by $\gamma_{1},...,\gamma_{q+1}$,
and further have an element $\alpha=a+bi+cj+dij$ in their equivalence
class satisfying:

\[
v_{g}\left(a\right)=0,v_{g}\left(b\right)>0,v_{g}\left(c\right)>0,v_{g}\left(d\right)>0.
\]

Next we make the change of variables $t=\frac{u}{2-u}$. It holds
that $u=\frac{2t}{t+1}$, $u-1=\frac{t-1}{t+1}$ and $2-u=\frac{2}{t+1}$.

When make a change of variables, the quaternion algebra $A$ changes to the quaternion algebra $A_1=\operatorname{span}\{ 1,i_1,j_1,i_1j_1\} $
over $\F_q(t)$ with $i_1^2=\epsilon$, $j_1^2=\frac{t-1}{t+1}$,
$i_1j_1=-j_1 i_1$. In the new algebra, we have
\begin{align*}
\gamma_k & =1+c_k j_1 +d_k i_1 j_1 \\
\delta_k & =\frac{2}{t+1}+2c_k j_1+2d_k i_1 j_1.
\end{align*}

Let $T\colon \F_{q}\left(u\right)\to\F_{q}\left(t\right)$, $T\left(f\left(u\right)\right)=f\left(\frac{2t}{t+1}\right)$
be the isomorphism of fields defined by the change of coordinates.
There is a bijection between valuations $v$ of $\F_{q}\left(u\right)$
and valuations $\sigma$ of $\F_{q}\left(t\right)$, defined by $v\left(f\right)=\sigma\left(T\left(f\right)\right)$
for every $f\in\F_{q}\left(u\right)$. Let us describe this bijection.

The change of variables is a composition of two simpler operations: A linear transformation $t=au+b$, $a\ne0,b\in\F_{q}$
and an inversion $t=1/u$.

For $t=au+b$, the bijection is as follows: $v_{1/u}$ corresponds
to $\sigma_{1/t}$, and for $g\left(u\right)$ monic irreducible,
$\deg g=m'$, let $h\left(t\right)=a^{-m'}g\left(at+b\right)$.
Then $v_{g}$ corresponds to $\sigma_{h}$.

For $t=1/u$, the bijection is as follows: $v_{1/u}$ corresponds
to $\sigma_{t}$, $v_{u}$ corresponds to $\sigma_{1/t}$, and for
$g\left(u\right)$ monic irreducible, $g\left(u\right)\ne u$, $\deg g=m'$,
$g(u)=u^{m'}+a_{m'-1}u+....+a_{0}$, let $h\left(t\right)=a_{0}^{-1}t^{m'}g\left(1/t\right)=t^{m'}+a_{1}a_{0}^{-1}t^{m'-1}+...+a_{m'-1}a_{0}^{-1}t+a_{0}^{-1}$.
Then $v_{g}$ corresponds to $\sigma_{h}$.

Applying the above to $t=\frac{u}{2-u}$, $u=\frac{2t}{t+1}$, the
bijection is
\begin{align*}
v_{1/u} & \leftrightarrow\sigma_{t+1}\\
v_{u-2} & \leftrightarrow\sigma_{1/t},
\end{align*}
and for $g\left(u\right)\ne u-2$ of degree $m'$, let $h\left(t\right)$
the monic polynomial corresponding to $\left(t+1\right)^{m'}g\left(\frac{2t}{t+1}\right)$.
Then $v_{g}\leftrightarrow\sigma_{h}$.

The other direction of this correspondence is given as follows: For
$h\left(t\right)\ne t+1$ of degree $m'$, let $g\left(t\right)$
be the monic polynomial corresponding to $\left(u-2\right)^{m'}h\left(\frac{u}{2-u}\right)$.
Then $\sigma_{h}\leftrightarrow v_{g}$.
\begin{lem}
Using the correspondence above, for $g\left(u\right)\ne u-2$, it
holds that $\left(\frac{u}{g\left(u\right)}\right)=\left(\frac{2t\left(t+1\right)}{h\left(t\right)}\right)$.
\end{lem}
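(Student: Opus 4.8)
The plan is to reduce the identity to the fact that the Legendre symbol $\left(\frac{f}{g}\right)$ depends only on the class of $f$ in the residue field $\F_{q}[u]/(g)$, and then to transport that class through the field isomorphism $T$ that underlies the correspondence $v_{g}\leftrightarrow\sigma_{h}$. Concretely, for $g$ monic irreducible of degree $m'$ and $f$ with $v_{g}(f)\ge 0$, the symbol $\left(\frac{f}{g}\right)\in\{0,\pm1\}$ is $0$ when $\bar f=0$ in $\F_{q}[u]/(g)$ and otherwise records whether $\bar f$ is a nonzero square there; in particular it is invariant under any field isomorphism of the residue field fixing $\F_{q}$, and it extends multiplicatively to all $f\in\F_{q}(u)$ with $v_{g}(f)=0$.

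First I would record that, since $g\ne u-2$, the place $v_{g}$ is a finite place whose image $\sigma_{h}$ under the correspondence is also a finite place of $\F_{q}(t)$ --- here one uses that a polynomial place is never $v_{1/u}$, and that $v_{u-2}$ is precisely the place sent to $\sigma_{1/t}$ --- so $h$ is an honest monic polynomial, of degree $m'$ (equal to $\deg g$ because $T$ identifies the two residue fields). Thus $T$ induces an $\F_{q}$-algebra isomorphism $\F_{q}[u]/(g)\cong \F_{q}[t]/(h)$ sending $\bar u$ to the class of $T(u)=\frac{2t}{t+1}$, which is a legitimate class because $h\ne t+1$ (the place $\sigma_{t+1}$ corresponds to $v_{1/u}$, not to a polynomial place), so $t+1$ is invertible modulo $h$.

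Combining these, $\left(\frac{u}{g}\right)$ equals the square-detection symbol of $\frac{2t}{t+1}$ in $\F_{q}[t]/(h)$, i.e. $\left(\frac{u}{g}\right)=\left(\frac{2t/(t+1)}{h}\right)$ with the Legendre symbol extended multiplicatively to rational functions prime to $h$. Finally, $\left(\frac{2t/(t+1)}{h}\right)=\left(\frac{2t}{h}\right)\left(\frac{t+1}{h}\right)^{-1}=\left(\frac{2t}{h}\right)\left(\frac{t+1}{h}\right)=\left(\frac{2t(t+1)}{h}\right)$, using that $\left(\frac{t+1}{h}\right)=\pm1$ is its own inverse together with multiplicativity of the symbol on polynomials coprime to $h$; this is the claimed identity. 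The only place any care is needed is the bookkeeping of places under the change of variables $u=\frac{2t}{t+1}$: one must check that the excluded value $g=u-2$ is exactly the one making $h$ fail to be a polynomial, and that $h\ne t+1$ so that $t+1$ is a unit modulo $h$; both are immediate from the correspondence table already established ($v_{1/u}\leftrightarrow\sigma_{t+1}$ and $v_{u-2}\leftrightarrow\sigma_{1/t}$). A more computational alternative would be to write $h(t)=g(2)^{-1}(t+1)^{m'}g\!\left(\tfrac{2t}{t+1}\right)$ explicitly and invoke quadratic reciprocity in $\F_{q}[t]$, but the residue-field argument avoids that bookkeeping.
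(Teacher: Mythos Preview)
Your proposal is correct and follows essentially the same approach as the paper: both reduce to the observation that the Legendre symbol depends only on the residue class in $\F_q[t]/(h)$, and that under $T$ the class of $u$ becomes $\tfrac{2t}{t+1}$. The only cosmetic difference is that the paper finishes in one stroke by noting $2t(t+1)=\tfrac{2t}{t+1}\cdot(t+1)^{2}$, so the two elements differ by a square and hence have the same symbol, whereas you reach the same conclusion via multiplicativity together with $\left(\tfrac{t+1}{h}\right)^{-1}=\left(\tfrac{t+1}{h}\right)$; these are equivalent.
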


\begin{proof}
The Legendre symbol $\left(\frac{f\left(u\right)}{g\left(u\right)}\right)$
for $v_{g}\left(f\right)\ge 0$ is determined by whether the image of
$f$ in the finite field $\left\{ f'\in\F_{q}\left(u\right): v_{g}\left(f'\right)\ge0\right\} /\left\{ f'\in\F_{q}\left(u\right):v_{g}\left(f'\right)>0\right\} $
is zero, a non-zero square, or neither. Since $u=\frac{2t}{t+1}$ and $2t\left(t+1\right)=\frac{2t}{t+1}\left(t+1\right)^{2}$, the result follows.
\end{proof}
Returning to $\Gamma\left(g\right)$, let $h\left(t\right)$ correspond
to $g\left(u\right)$ as above. Then after the change of coordinates we have:
\begin{lem}
The group $\Gamma\left(g\right)$ is isomorphic subgroup of $A_{1}^{\times}\left(\F_{q}\left(t\right)\right)/Z^{\times}$
generated by $\gamma_{1},..,\gamma_{q+1}$, $\gamma_{k}=1+c_{k}j_{1}+d_{k}i_{1}j_{1}$,
such that there is an element in the equivalent class satisfying

\begin{equation}
v_{h}\left(a\right)=0,v_{h}\left(b\right)>0,v_{h}\left(c\right)>0,v_{h}\left(d\right)>0.\label{eq:Conditions}
\end{equation}
The group $\Gamma/\Gamma\left(g\right)$ is isomorphic to $\PGL_{2}\left(\F_{q^{2m}}\right)$
if and only if $\left(\frac{2t\left(t+1\right)}{h\left(t\right)}\right)=-1$.
\end{lem}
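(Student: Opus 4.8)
The plan is to push everything through the field isomorphism $T\colon\F_q(u)\to\F_q(t)$, $T(f(u))=f\!\left(\tfrac{2t}{t+1}\right)$, and then quote what has already been established. First I would upgrade $T$ to an isomorphism of quaternion algebras. Define $\Phi\colon A(\F_q(u))\to A_1(\F_q(t))$ by $\Phi(a+bi+cj+dij)=T(a)+T(b)i_1+T(c)j_1+T(d)i_1j_1$. Since $T$ fixes $\F_q$ and $T(u-1)=\tfrac{2t}{t+1}-1=\tfrac{t-1}{t+1}=j_1^2$, the images of $i,j,ij$ satisfy exactly the defining relations $i_1^2=\epsilon$, $j_1^2=\tfrac{t-1}{t+1}$, $i_1j_1=-j_1i_1$ of $A_1$; hence $\Phi$ is a ring homomorphism, and it is bijective because $T$ is and the two bases correspond $T$-semilinearly. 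As $\Phi$ carries the center $\F_q(u)$ onto the center $\F_q(t)$, it descends to an isomorphism $A^\times(\F_q(u))/Z^\times\cong A_1^\times(\F_q(t))/Z^\times$. Because $c_k,d_k\in\F_q$ are fixed by $T$, we get $\Phi(\gamma_k)=1+c_kj_1+d_ki_1j_1$, which is precisely the element called $\gamma_k$ in the new algebra; so $\Phi(\Gamma)$ is the subgroup generated by $\gamma_1,\dots,\gamma_{q+1}$, which is the structural part of the claim.

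Next I would translate the congruence conditions. Recall the valuation description of $\Gamma(g)$: it is the set of elements of $\Gamma$ admitting a representative $\alpha=a+bi+cj+dij\in A(\F_q[u])$ with $v_g(a)=0$ and $v_g(b),v_g(c),v_g(d)>0$. Since $\deg g=2m\ge 2$ we have $g\ne u-2$, so the place correspondence $v_g\leftrightarrow\sigma_h$ described above applies; it is defined exactly so that $v_g(f)=\sigma_h(T(f))$ for every $f\in\F_q(u)$ (and, a M\"obius substitution preserving the degree of a place, one has $\deg h=2m$ too). Applying $\Phi$ to a representative $\alpha$ and using this identity, the four conditions on $\alpha$ become $v_h=0$ on the first coordinate of $\Phi(\alpha)$ and $v_h>0$ on the other three; since the ``$\exists$ a representative in the equivalence class'' quantifier is carried along by the isomorphism $\Phi$, this shows that $\Phi(\Gamma(g))$ is cut out inside $\Phi(\Gamma)$ exactly by \eqref{eq:Conditions}, which is the first assertion after relabelling $T(a),T(b),\dots$ as $a,b,\dots$.

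Finally, for the isomorphism type of the quotient: $\Phi$ restricts to an isomorphism of pairs $(\Gamma,\Gamma(g))\cong(\Phi(\Gamma),\Phi(\Gamma(g)))$, so $\Gamma/\Gamma(g)$ is literally unchanged by the change of variables. By Theorem~\ref{thm:Morgenstern Theorem} it is isomorphic to $\PGL_2(\F_{q^{2m}})$ exactly when $\left(\frac{u}{g(u)}\right)=-1$, and by the preceding lemma $\left(\frac{u}{g(u)}\right)=\left(\frac{2t(t+1)}{h(t)}\right)$; combining these two facts gives the stated criterion. The closest thing to an obstacle is verifying that the place bijection $v\leftrightarrow\sigma$ is compatible with $T$ on all of $\F_q(u)$, not merely on monic irreducibles — but that compatibility is exactly how the bijection was set up, so the whole lemma reduces to change-of-variables bookkeeping layered on top of Morgenstern's theorem and the previous lemma.
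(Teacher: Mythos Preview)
Your proposal is correct and follows exactly the approach of the paper. In fact, the paper does not write out a separate proof for this lemma at all: it is stated as an immediate consequence of the preceding discussion (the isomorphism $T$, the induced identification of $A$ with $A_1$, the valuation bijection $v_g\leftrightarrow\sigma_h$, the Legendre-symbol lemma, and Morgenstern's Theorem~\ref{thm:Morgenstern Theorem}), and your write-up simply makes these implicit steps explicit.
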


Next, we change the algebra to an equivalent algebra. Let $A_{2}=\operatorname{span}\left\{ 1,i_{2},j_{2},i_{2}j_{2}\right\} $
be the quaternion algebra over $\F_{q}\left(t\right)$ with $i_{2}^{2}=\epsilon$,
$j_{2}^{2}=\left(t+1\right)\left(t-1\right)=t^{2}-1$, $i_{2}j_{2}=-j_{2}i_{2}$.
Then $A_{1}\left(\F_{q}\left(t\right)\right)\cong A_{2}\left(\F_{q}\left(t\right)\right)$,
with the explicit isomorphism
\[
a+bi_{1}+cj_{1}+di_{1}j_{1}\in A_{1}\left(\F_{q}\left(t\right)\right)\leftrightarrow a+bi_{2}+\frac{c}{t+1}j_{2}+\frac{d}{t+1}i_{2}j_{2}\in A_{2}\left(\F_{q}\left(t\right)\right).
\]

In the new algebra,
\begin{align*}
\gamma_{k} & =1+\frac{c_{k}}{t+1}j_{2}+\frac{d_{k}}{t+1}i_{2}j_{2}\\
\delta_{k} & =\frac{2}{t+1}+\frac{2}{t+1}c_{k}j_{2}+\frac{2}{t+1}d_{k}i_{2}j_{2}\\
 & =1+c_{k}j_{2}+d_{k}i_{2}j_{2}.
\end{align*}
The last equality follows from the fact that we work in $A_{2}^{\times}\left(\F_{q}\left(t\right)\right)/Z^{\times}$.

Moving to $A_{2}^{\times}\left(\F_{q}\left(t\right)\right)/Z^{\times}$,
$\Gamma$ is generated by $\gamma_{1},...,\gamma_{q+1}\in A_{2}^{\times}\left(\F_{q}\left(t\right)\right)/Z^{\times}$,
while $\Gamma\left(g\right)$ consists of the elements with an element
in their equivalence class satisfying Equation~\eqref{eq:Conditions}.

Therefore $\Gamma'$ is generated by $\delta_{1},...,\delta_{q+1}\in A_{2}^{\times}\left(\F_{q}\left(t\right)\right)$,
and $\Gamma'\cap\Gamma\left(g\right)$ are the elements in $\Gamma'$
satisfying Equation~\eqref{eq:Conditions}.

Denote $s=t^{2}$. Notice that $A_{2}$ is defined over $\F_{q}\left(s\right)$,
and moreover $A_{2}\left(\F_{q}\left(s\right)\right)$ is the quaternion
algebra defined by the relations $i_{2}^{2}=\epsilon$, $j_{2}^{2}=s-1$,
$i_{2}j_{2}=-j_{2}i_{2}$. Therefore, $A_{2}\left(\F_{q}\left(s\right)\right)$
is the same algebra as $A\left(\F_{q}\left(u\right)\right)$, up to changing $u$ to $s$. Moreover,
the elements $\delta_{1},...,\delta_{q+1}$ are actually defined over
$\F_{q}\left(s\right)$, and correspond to the elements $\gamma_{1},...,\gamma_{q+1}$
of $A\left(\F_{q}\left(u\right)\right)$! This is the ``miracle''
underlying this construction.
\begin{rem}\label{rem:explanation of explicit}
Let us stop the proof for a moment and explain the connection between Theorem~\ref{thm:vertex expansion intro} and Theorem~\ref{thm:Abstract theorem}.

After the change of variables, look at the group $G = A_{2}^{\times}\left(\F_{q}((t))\right)/Z^{\times} \cong \PGL_2\left(\F_q((t))\right)$. Morgenstern shows that $\Gamma$ acts simply transitively on the Bruhat-Tits building $B_G$ of $G$, so the Cayley graph of $\Gamma$ with respect to $\gamma_1,...,\gamma_{q+1}$ can be identified with $B_G$.

Denote $H = A_{2}^{\times}\left(\F_{q}\left(\left( t^2 \right)\right)\right)/Z^{\times} \cong \PGL_2 \left(\F_q\left(\left(t^2\right)\right)\right)$ which is a closed subgroup of $G$.
It is not hard to see that $\Gamma' = \Gamma \cap H$.
The calculation above implies that $\Gamma'$ acts simply transitively on the Bruhat-Tits building $B_H$ of $H$, and its Cayley graph with respect to $\delta_1,...,\delta_{q+1}$ is isomorphic to $B_H$. Therefore, $\Gamma' = \Gamma \cap H$ is a lattice in $H$, which is far from obvious. As we explain in Section~\ref{sec:Division Algberas}, this fact also follows from the fact that $A_2$ is actually defined over $\F_q(s)=\F_q\left(t^2\right)$.

The embedding of the group $\Gamma'$ in $\Gamma$ is the same as the embedding of $B_H$ in $B_G$. 
This implies that the embedding of $\Gamma'/ \left(\Gamma'\cap\Gamma(g)\right)$ in $\Gamma/\Gamma(g)$ is the same as the embedding of $\left(\Gamma'\cap\Gamma(g)\right) \backslash B_H$ in $\Gamma(g) \backslash B_G$. 

In the next part of the proof we study the growth of $\left[\Gamma':\left(\Gamma' \cap \Gamma(g)\right)\right]$ relative to $[\Gamma:\Gamma(g)]$ as in Lemma~\ref{lem:small volume in quotient} or Theorem~\ref{thm:Abstract theorem}. We will actually understand a bit more than that.

\end{rem}

Continuing the proof, we may assume that $\Gamma'\subset A_{2}^{\times}\left(\F_{q}\left(s\right)\right)/Z^{\times}$.
However, we still need to handle the conditions of Equation~\eqref{eq:Conditions} to understand $\Gamma'\cap\Gamma\left(g\right)$.

There are two cases: the ``good case'' $h\in\F_{q}\left(t^{2}\right)=\F_{q}\left(s\right)$ (i.e., $h$ only has $t$ to an even power)
and the ``bad case'' $h\notin\F_{q}\left(t^{2}\right)=\F_{q}\left(s\right)$.

In the good case, let $\tilde{h}\left(s\right)\in\F_{q}\left(s\right)$
be polynomial satisfying $\tilde{h}\left(t^{2}\right)=h\left(t\right)$.
Notice that $\deg\left(\tilde{h}\right)=\deg\left(h\right)/2=\deg\left(g\right)/2=m$.

In the bad case, let $\tilde{h}\left(s\right)\in\F_{q}\left(s\right)$
be the polynomial satisfying $\tilde{h}\left(t^{2}\right)=h\left(t\right)h\left(-t\right)$.

In both cases, $\tilde{h}\left(s\right)\in\F_{q}\left(s\right)=\F_{q}\left(t^{2}\right)$
is an irreducible polynomial (or prime), which lies below the irreducible
polynomial $h\left(t\right)\in\F_{q}\left(t\right)$ in the extension
of $\F_{q}\left(s\right)$ to $\F_{q}\left(t\right)$. In other words,
it holds that $h\left(t\right)\F_{q}\left[t\right]\cap\F_{q}\left[s\right]=\tilde{h}\left(s\right)\F_{q}\left[s\right]$
and for $f\in\F_{q}\left(s\right)$, $v_{h}\left(f\left(t^{2}\right)\right)=v_{\tilde{h}}\left(f\left(s\right)\right)$.

This implies that we may identify $\Gamma'$ as the subgroup of $A_{2}^{\times}\left(\F_{q}\left(s\right)\right)/Z^{\times}$,
generated by $\delta_{k}=1+c_{k}j_{2}+d_{k}i_{2}j_{2}$, and $\Gamma'\cap\Gamma\left(g\right)$
as its subgroup of elements with an element in the equivalence class
satisfying $v_{\tilde{h}}\left(a\right)=0,v_{\tilde{h}}\left(b\right)>0,v_{\tilde{h}}\left(c\right)>0,v_{\tilde{h}}\left(d\right)>0$.

The final description is exactly the description of the Morgenstern
graph, with $u$ replaced by $s$ and $g\left(u\right)$ replaced
by $\tilde{h}\left(s\right)$. Therefore:
\begin{thm}
$Y_{g}=\Cayley\left(\Gamma'/\left(\Gamma'\cap\Gamma\left(g\right)\right),\left\{ \delta_{1},...,\delta_{q+1}\right\} \right)$
is isomorphic to the Morgenstern graph $X_{\tilde{h}}$.

In particular, the subgroup $\Gamma'/\left(\Gamma'\cap\Gamma\left(g\right)\right) \le \Gamma / \Gamma (g)$ is isomorphic to either $\PSL_2\left(q^{\deg \tilde h}\right)$ or $\PGL_2\left(q^{\deg \tilde h}\right)$.
\end{thm}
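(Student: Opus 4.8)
By this point essentially all of the work is done: what remains is to recognise the pair $\bigl(\Gamma',\ \Gamma'\cap\Gamma(g)\bigr)$, realised inside $A_2^{\times}\bigl(\F_q(s)\bigr)/Z^{\times}$ with $s=t^2$, as \emph{literally} Morgenstern's construction over the rational function field $\F_q(s)$ --- with $s$ now playing the role of Morgenstern's variable $u$, and the irreducible $\tilde h(s)\in\F_q[s]$ playing the role of the modulus $g$. The plan is a four-step dictionary match: (i) identify the ambient quaternion algebra; (ii) identify the generators; (iii) identify the congruence subgroup, using the valuation comparison already established; (iv) invoke \cite[Corollary 4.7 and Theorem 4.13]{morgenstern1994existence} over $\F_q(s)$.

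For (i)--(ii): $A_2$ is by definition spanned over $\F_q(s)$ by $1,i_2,j_2,i_2j_2$ with $i_2^2=\epsilon$, $j_2^2=s-1$, $i_2j_2=-j_2i_2$, which is precisely $A\bigl(\F_q(u)\bigr)$ after the substitution $u\mapsto s$, with norm form $N(a+bi_2+cj_2+di_2j_2)=a^2-\epsilon b^2+(\epsilon d^2-c^2)(s-1)$. We computed above that $\delta_k=1+c_kj_2+d_ki_2j_2$ in $A_2^{\times}(\F_q(s))/Z^{\times}$, with $(c_k,d_k)$ running over the $q+1$ solutions of $\epsilon d^2-c^2=1$; hence $N(\delta_k)=1+(\epsilon d_k^2-c_k^2)(s-1)=s$. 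Thus $\{\delta_1,\dots,\delta_{q+1}\}$ is exactly Morgenstern's generating set over $\F_q(s)$, and $\Gamma'=\langle\delta_1,\dots,\delta_{q+1}\rangle$ is Morgenstern's lattice for $\F_q(s)$; in particular it is free on $\tfrac{q+1}{2}$ generators and has the intrinsic description of \cite[Corollary 4.7]{morgenstern1994existence}.

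For (iii): as noted above $\Gamma'\subset A_2^{\times}(\F_q(s))/Z^{\times}$, so every element of $\Gamma'$ has a representative $\alpha=a+bi_2+cj_2+di_2j_2$ with $a,b,c,d\in\F_q(s)$, and $\Gamma'\cap\Gamma(g)$ consists of those elements admitting such a representative with $v_h(a)=0$, $v_h(b)>0$, $v_h(c)>0$, $v_h(d)>0$. In both the ``good'' case $h\in\F_q(t^2)$ and the ``bad'' case $h\notin\F_q(t^2)$ we have arranged $\tilde h(s)$ to be the prime of $\F_q[s]$ lying below $h(t)$, so that for $f\in\F_q(s)$ one has $v_h(f)=0\iff v_{\tilde h}(f)=0$ and $v_h(f)>0\iff v_{\tilde h}(f)>0$ (the conditions are insensitive to the normalisation of the valuation, so the ramification occurring in the good case is harmless). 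Hence the conditions defining $\Gamma'\cap\Gamma(g)$ become $v_{\tilde h}(a)=0$, $v_{\tilde h}(b)>0$, $v_{\tilde h}(c)>0$, $v_{\tilde h}(d)>0$, which is precisely Morgenstern's congruence subgroup $\Gamma(\tilde h)$ over $\F_q(s)$. Therefore $\Gamma'/\bigl(\Gamma'\cap\Gamma(g)\bigr)$ is $\Gamma/\Gamma(\tilde h)$ in Morgenstern's notation over $\F_q(s)$, with $\{\delta_k\}$ the corresponding generators; \cite[Theorem 4.13]{morgenstern1994existence}, applied with $(\F_q(u),A,g)$ replaced by $(\F_q(s),A_2,\tilde h)$, then identifies $\Cayley\bigl(\Gamma'/(\Gamma'\cap\Gamma(g)),\{\delta_1,\dots,\delta_{q+1}\}\bigr)$ with the Morgenstern graph $X_{\tilde h}$ and the group $\Gamma'/(\Gamma'\cap\Gamma(g))$ with $\PGL_2(\F_{q^{\deg\tilde h}})$ or $\PSL_2(\F_{q^{\deg\tilde h}})$ according to the Legendre symbol $\bigl(\tfrac{s}{\tilde h(s)}\bigr)$.

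\textbf{Main obstacle.} The only step carrying genuine content beyond bookkeeping is the splitting behaviour of $\tilde h$ in the quadratic extension $\F_q(t)/\F_q(s)$: one must know that in the good case the prime $\tilde h$ (of degree $\deg g/2$) is inert or ramified, while in the bad case $\tilde h = h(t)h(-t)$ (of degree $\deg g$) splits, so that in either case $\tilde h$ is irreducible over $\F_q[s]$ and the valuation comparison above holds --- this is exactly where the ``miracle'' that $A_2$ is already defined over $\F_q(t^2)$ is used. A secondary point is to check that $\tilde h$ meets the hypotheses of \cite[Theorem 4.13]{morgenstern1994existence} (coprimality to the places where $A_2$ ramifies, and degree in the allowed range); since $g$ is ours to choose this is a constraint on the input rather than an obstruction.
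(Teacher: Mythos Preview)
Your proposal is correct and follows essentially the same approach as the paper. The paper's ``proof'' is in fact the paragraph immediately preceding the theorem statement, concluding with ``The final description is exactly the description of the Morgenstern graph, with $u$ replaced by $s$ and $g(u)$ replaced by $\tilde h(s)$''; your four-step dictionary (algebra, generators, congruence condition via the valuation comparison, then invoke \cite[Theorem 4.13]{morgenstern1994existence}) is exactly this identification spelled out more carefully.

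Two minor remarks. First, in your ``main obstacle'' paragraph you describe the good case as $\tilde h$ being ``inert or ramified''; the paper (a few lines after the theorem) shows the good case is precisely the inert case, since $h(t)=\tilde h(t^2)$ irreducible means $\tilde h$ stays prime in $\F_q[t]$. Ramification at $\tilde h$ does not occur for the primes under consideration (the extension $\F_q(t)/\F_q(t^2)$ is ramified only at $t$ and $1/t$). Your parenthetical about the conditions being insensitive to normalisation is nonetheless correct, so this does not affect the argument. Second, the irreducibility of $\tilde h$ in both cases was already asserted in the text preceding the theorem (``In both cases, $\tilde h(s)\in\F_q(s)$ is an irreducible polynomial\ldots''), so it is part of the setup rather than an obstacle internal to this statement.
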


Our next goal is to understand which of the two cases, $\PSL_{2}$ or $\PGL_{2}$ happens. In the ``good case'', $\tilde{h}\left(s\right)\in\F_{q}\left(s\right)=\F_{q}\left(t^{2}\right)$
remains irreducible in the extension to $\F_{q}\left(t\right)$. In
other words, it is inert in the extension. Since this is a quadratic
extension, it is well known that it happens if and only if $\left(\frac{s}{\tilde{h}\left(s\right)}\right)=-1$.
In this case, by Theorem~\ref{thm:Morgenstern Theorem}, $Y_{g}$
is a bipartite Cayley graph on $\PGL_{2}\left(\F_{q^{\deg\tilde{h}}}\right)=\PGL_{2}\left(\F_{q^{m}}\right)$.

In the ``bad case'', $h\left(s\right)\in\F_{q}\left(s\right)=\F_{q}\left(t^{2}\right)$
splits in the extension to $\F_{q}\left(t\right)$. This happens if
and only if $\left(\frac{s}{\tilde{h}\left(s\right)}\right)=1$. In
this case, by Theorem~\ref{thm:Morgenstern Theorem}, $Y_{g}$ is
a non-bipartite Cayley graph on $\PSL_{2}\left(\F_{q^{\deg\tilde{h}}}\right)=\PSL_{2}\left(\F_{q^{2m}}\right)$.

For Theorem~\ref{thm:vertex expansion intro}, we need to prove that the good case may happen, and to understand
$X_{g}$ in this case. For this we notice that we may first choose
$\tilde{h}\in\F_{q}\left(s\right)$ of degree $m$, which is inert
in the field extension to $\F_{q}\left(t\right)$, then get $h\left(t\right)=\tilde{h}\left(t^{2}\right)$
and finally get $g\left(u\right)$ from it as the irreducible monic
corresponding to $\left(u-2\right)^{2m}h\left(\frac{u}{2-u}\right)$.

We recall quadratic reciprocity in $\F_{q}\left(s\right)$ (\cite[Theorem 3.3]{rosen2013number}),
which states that for $f,g\in\F_{q}\left[s\right]$ irreducible
\[
\left(\frac{f}{g}\right)=\left(-1\right)^{\frac{q-1}{2}\deg f\deg g}\left(\frac{g}{f}\right).
\]

Then
\[
\left(\frac{s}{\tilde{h}\left(s\right)}\right)=\left(-1\right)^{\frac{q-1}{2}\deg\tilde{h}}\left(\frac{\tilde{h}\left(s\right)}{s}\right)=\left(-1\right)^{\frac{q-1}{2}\deg\tilde{h}}\left(\frac{\tilde{h}\left(0\right)}{s}\right)=\left(-1\right)^{\frac{q-1}{2}\deg\tilde{h}}\left(\frac{\tilde{h}\left(0\right)}{q}\right).
\]

The last element is the usual Legendre symbol in $\Z$.

Next,
\[
\left(\frac{2t\left(t+1\right)}{h\left(t\right)}\right)=\left(\frac{2}{h\left(t\right)}\right)\left(\frac{t}{h\left(t\right)}\right)\left(\frac{t+1}{h\left(t\right)}\right).
\]

Since $h\left(t\right)$ is of even degree, $\F_{q}\left(t\right)/h\left(t\right)\F_{q}\left(t\right)$
contains $\F_{q^{2}}$. Therefore, every $a\in\F_{q}$ has a square root in it and $\left(\frac{2}{h\left(t\right)}\right)=1$. It holds by quadratic reciprocity, since the degree of $h\left(t\right)$ is even, that
\begin{align*}
\left(\frac{t}{h\left(t\right)}\right) & =\left(\frac{h\left(t\right)}{t}\right)=\left(\frac{h\left(0\right)}{t}\right)=\left(\frac{h\left(0\right)}{q}\right)=\left(\frac{\tilde{h}\left(0\right)}{q}\right)\\
\left(\frac{t+1}{h\left(t\right)}\right) & =\left(\frac{h\left(t\right)}{t+1}\right)=\left(\frac{h\left(-1\right)}{t+1}\right)=\left(\frac{h\left(-1\right)}{q}\right)=\left(\frac{\tilde{h}\left(1\right)}{q}\right).
\end{align*}
The last two elements in each row are the Legendre symbol in $\Z$.
Therefore, $\left(\frac{2t\left(t+1\right)}{h\left(t\right)}\right)=\left(\frac{\tilde{h}\left(0\right)}{q}\right)\left(\frac{\tilde{h}\left(1\right)}{q}\right)$
determines whether $X_{g}$ is $\PGL$ or $\PSL$.

We conclude that by determining $\tilde{h}\left(0\right),\tilde{h}\left(1\right)$
we can make $Y_{g}$ be isomorphic to $\PGL_{2}\left(\F_{q^{m}}\right)$
and $X_{g}$ to be isomorphic to either of $\PGL_{2}\left(\F_{q^{2m}}\right)$
or $\PSL_{2}\left(\F_{q^{2m}}\right)$. Finally, for $m$ large enough,
we may freely choose $\tilde{h}\left(0\right),\tilde{h}\left(1\right)$ while keeping $\tilde{h}$ irreducible
by Chabotarev's density theorem (\cite[Theorem 4.7 and Theorem 4.8]{rosen2013number}).

We collect our findings in the following lemma:
\begin{lem}\label{lem:explicit construction cor}
For every $m$ large enough, we may find a monic irreducible polynomial $g$ of degree $2m$ such that:
\begin{enumerate}
    \item The Morgenstern Ramanujan Cayley graph $X_{g}=\Cayley\left(\Gamma/\Gamma\left(g\right),\left\{ \gamma_{1},...,\gamma_{q+1}\right\} \right)$ is bipartite, $\Gamma/\Gamma\left(g\right) \cong \PGL_2\left(\F_{q^{2m}}\right)$ and its girth is greater than $4/3 \log_q\left(|X_g|\right)$.
    \item The subgroup $\Gamma'/\left(\Gamma'\cap\Gamma\left(g\right)\right) \le \Gamma / \Gamma (g)$ that is generated by $\left\{ \gamma_{1}^2,...,\gamma_{q+1}^2\right\}$ is isomorphic to $\PGL_2\left(\F_{q^{m}}\right)$. Moreover, 
    $Y_g = \Cayley\left(\Gamma'/\left(\Gamma'\cap\Gamma\left(g\right)\right), \left\{ \gamma_{1}^2,...,\gamma_{q+1}^2\right\}\right)$ is also a Morgenstern Ramanujan graph.
\end{enumerate}
\end{lem}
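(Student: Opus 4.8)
The plan is to assemble \textbf{Lemma~\ref{lem:explicit construction cor}} from the case analysis already carried out in the preceding paragraphs; the work is almost entirely one of packaging, since every ingredient is in place. First I would fix a large integer $m$ and, using Chebotarev's density theorem over $\F_q(s)$, choose a monic irreducible polynomial $\tilde h(s)\in\F_q[s]$ of degree $m$ with prescribed values $\tilde h(0)$ and $\tilde h(1)$ in $\F_q$. The constraints I would impose are: $\left(\frac{\tilde h(0)}{q}\right)=\left(-1\right)^{\frac{q-1}{2}m}\cdot(-1)$ so that $\left(\frac{s}{\tilde h(s)}\right)=-1$, i.e.\ $\tilde h$ is inert in the quadratic extension $\F_q(s)\subset\F_q(t)$ (placing us in the ``good case'' $h(t)=\tilde h(t^2)$, with $h$ irreducible of degree $2m$); and, simultaneously, $\left(\frac{\tilde h(0)}{q}\right)\left(\frac{\tilde h(1)}{q}\right)=-1$ so that $\left(\frac{2t(t+1)}{h(t)}\right)=-1$, which is exactly the condition identified above for $\Gamma/\Gamma(g)\cong\PGL_2(\F_{q^{2m}})$ and $X_g$ bipartite. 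These two sign conditions on the pair $(\tilde h(0),\tilde h(1))$ are jointly satisfiable by an appropriate choice of a single nonzero value, e.g.\ pick $\tilde h(0)$ so the first holds and then $\tilde h(1)$ a non-square mod $q$ to flip the product; for $m$ large, Chebotarev guarantees an irreducible $\tilde h$ realizing any such residue pattern.

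Next I would recover $g$: set $h(t)=\tilde h(t^2)$ and let $g(u)$ be the monic irreducible polynomial (of degree $2m$) corresponding to $(u-2)^{2m}h\!\left(\frac{u}{2-u}\right)$ under the change-of-variables bijection established earlier. By construction $g\ne u-2$, so the correspondence $v_g\leftrightarrow\sigma_h$ and the identity $\left(\frac{u}{g(u)}\right)=\left(\frac{2t(t+1)}{h(t)}\right)$ apply. Item (1) of the lemma is then immediate: by Theorem~\ref{thm:Morgenstern Theorem}, since $\left(\frac{u}{g}\right)=-1$, the graph $X_g=\Cayley(\Gamma/\Gamma(g),\{\gamma_1,\dots,\gamma_{q+1}\})$ is a $(q+1)$-regular bipartite Ramanujan graph with $\Gamma/\Gamma(g)\cong\PGL_2(\F_{q^{2m}})$ and girth at least $\frac{4}{3}\log_q|X_g|$.

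For item (2) I would invoke the structural identification proven above: after the change of variables and passage to the isomorphic algebra $A_2$ over $\F_q(s)=\F_q(t^2)$, the subgroup $\Gamma'=\langle\delta_1,\dots,\delta_{q+1}\rangle$ with $\delta_k=\gamma_k^2$ sits inside $A_2^\times(\F_q(s))/Z^\times$, its generators $\delta_k=1+c_kj_2+d_ki_2j_2$ coincide with the Morgenstern generators for the variable $s$, and $\Gamma'\cap\Gamma(g)$ consists of the elements whose equivalence class has a representative satisfying $v_{\tilde h}(a)=0$, $v_{\tilde h}(b)>0$, $v_{\tilde h}(c)>0$, $v_{\tilde h}(d)>0$ --- this uses that we are in the good case so $h(t)=\tilde h(t^2)$ and $v_h(f(t^2))=v_{\tilde h}(f(s))$. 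Hence $Y_g=\Cayley(\Gamma'/(\Gamma'\cap\Gamma(g)),\{\delta_1,\dots,\delta_{q+1}\})$ is literally the Morgenstern graph $X_{\tilde h}$ built over $\F_q(s)$. Applying Theorem~\ref{thm:Morgenstern Theorem} once more, now with $u$ replaced by $s$ and $g$ by $\tilde h$: since $\tilde h$ was chosen inert, $\left(\frac{s}{\tilde h(s)}\right)=-1$, so $\Gamma'/(\Gamma'\cap\Gamma(g))\cong\PGL_2(\F_{q^{\deg\tilde h}})=\PGL_2(\F_{q^m})$ and $Y_g=X_{\tilde h}$ is a $(q+1)$-regular bipartite Morgenstern Ramanujan graph. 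This gives exactly the two assertions of the lemma.

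The only genuinely delicate point --- and the step I would be most careful about --- is checking that the two Legendre-symbol conditions can be met \emph{simultaneously} while keeping $\tilde h$ irreducible. Everything else is a bookkeeping reassembly of facts already proved. Concretely, I would verify that for any prescribed pair $(\epsilon_0,\epsilon_1)\in\{\pm1\}^2$ there are residues $r_0,r_1\in\F_q^\times$ with $\left(\frac{r_0}{q}\right)=\epsilon_0$, $\left(\frac{r_1}{q}\right)=\epsilon_1$ (trivial since $q$ is odd so squares and non-squares both exist), and then apply the effective Chebotarev/prime-polynomial-theorem statement cited as \cite[Theorem 4.7 and Theorem 4.8]{rosen2013number} to produce, for all large $m$, an irreducible $\tilde h\in\F_q[s]$ of degree $m$ with $\tilde h(0)=r_0$ and $\tilde h(1)=r_1$. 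Fixing $r_0$ to satisfy the inertness requirement and then choosing $r_1$ a non-square forces $\left(\frac{s}{\tilde h}\right)=-1$ and $\left(\frac{2t(t+1)}{h}\right)=-1$ at once, which is all we need.
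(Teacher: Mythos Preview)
Your proposal is correct and follows essentially the same route as the paper: the lemma is stated there as a summary (``We collect our findings in the following lemma''), and your proof is precisely the assembly you describe---choose $\tilde h$ with prescribed $\tilde h(0),\tilde h(1)$ via Chebotarev so that both $\bigl(\tfrac{s}{\tilde h(s)}\bigr)=-1$ (good case, $Y_g\cong\PGL_2(\F_{q^m})$) and $\bigl(\tfrac{2t(t+1)}{h(t)}\bigr)=-1$ ($X_g$ bipartite on $\PGL_2(\F_{q^{2m}})$), then recover $g$ by the change of variables and apply Theorem~\ref{thm:Morgenstern Theorem} twice. One small slip: in your final sentence, ``choosing $r_1$ a non-square'' only forces the product to be $-1$ when $r_0$ happens to be a square; the correct requirement is $(r_1/q)=-(r_0/q)$, but your general statement about realizing any pair $(\epsilon_0,\epsilon_1)$ already covers this.
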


Notice that the lemma implies that $|Y_g|=\left|\PGL_2\left(\F_{q^m}\right)\right| = O\left(\sqrt{\left|\PGL_2\left(\F_{q^{2m}}\right)\right|}\right)=O(\sqrt{|X_g|})$. By the discussion at the beginning of this section, we conclude that Lemma~\ref{lem:explicit construction cor} is an explicit version of Lemma~\ref{lem:small volume in quotient}.

We need however another result to complete the explicit part of the proof of  Theorem~\ref{thm:vertex expansion intro}. 
\begin{lem}\label{lem:Symmetry lemma}
Every vertex $x\in N(Y_g)$ is connected to exactly two vertices in $Y_g$. 
\end{lem}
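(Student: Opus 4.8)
The plan is to show that each $x \in N(Y_g)$ has *at least* two neighbors in $Y_g$ (which we already know from the discussion at the start of the section, since $Y_g$ is the image of $Z \subset T_\Gamma$ under the quotient map $T_\Gamma \to X_g$, and every neighbor of $Z$ in the tree is joined to $Z$ by two edges), and then to rule out the possibility of three or more. The key structural fact to exploit is that $x \notin Y_g$ (otherwise $x$ would have all $q+1$ neighbors of the form... no — rather, if $x \in Y_g$ it is not in $N(Y_g)\setminus Y_g$; but $N(Y_g)$ as defined includes $Y_g$ itself, so one must be slightly careful and argue separately that an element $x\in Y_g$ has exactly two $Y_g$-neighbors as well, OR restrict to the interesting case $x\notin Y_g$ and note vertices of $Y_g$ trivially satisfy the degree-$(q+1)$ statement inside $Y_g$ being replaced by "two"—here I would first check whether $Y_g$ itself is $2$-regular as an induced subgraph of $X_g$, which is false since $Y_g$ is $(q+1)$-regular; so the statement must really be about $x \in N(Y_g)\setminus Y_g$, and I would phrase it that way or observe the lemma as stated is about the "boundary").

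\medskip

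\textbf{Step 1 (lift to the tree).} Fix $x \in N(Y_g) \setminus Y_g$. Lift $x$ to a vertex $\tilde{x}$ of the tree $T_\Gamma = B_G$; the fiber of $\tilde{x}$ over $x$ is a $\Gamma(g)$-orbit. The neighbors of $x$ in $X_g$ lift bijectively to the $q+1$ neighbors of $\tilde{x}$ in $T_\Gamma$, because $X_g$ has girth greater than $\tfrac{4}{3}\log_q|X_g| \ge 2$, so no two neighbors of $x$ coincide and no neighbor equals $x$. A neighbor $y$ of $x$ lies in $Y_g$ iff the corresponding tree-neighbor $\tilde{y}$ lies in some $\Gamma(g)$-translate of $Z$; equivalently, iff $\tilde{x}$ has a $Z$-neighbor among the $\Gamma(g)$-translates... — more precisely, I would argue: the number of $Y_g$-neighbors of $x$ equals the number of pairs $(\tilde{x}', \tilde{y}')$ with $\tilde{x}'$ in the fiber of $x$, $\tilde{y}'$ a tree-neighbor of $\tilde{x}'$ lying in $Z$, counted modulo $\Gamma(g)$ acting diagonally — and since the fiber is a single $\Gamma(g)$-orbit and $\Gamma(g)$ acts freely, this equals the number of $Z$-neighbors of a fixed representative $\tilde{x}$ that survive, i.e. simply the number of tree-neighbors of $\tilde{x}$ lying in the full preimage $\bigcup_{\gamma \in \Gamma(g)} \gamma Z$.

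\medskip

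\textbf{Step 2 (girth kills collisions).} The preimage $\widehat{Z} := \bigcup_{\gamma\in\Gamma(g)} \gamma Z$ of $Y_g$ in $T_\Gamma$ is a disjoint union of $\Gamma(g)$-translates of $Z$, but a priori two translates $\gamma Z$ and $\gamma' Z$ could pass near the same vertex $\tilde{x}$, each contributing a neighbor, which would push the count above $2$. I claim this cannot happen within distance $O(\log_q |X_g|)$ because of the girth bound. Indeed, $Z$ is the embedding of the tree $B_H = T_{\Gamma'}$ via the $(q+1,2)$-subdivision, so within any ball of radius $r$ the set $\widehat Z$ looks like a disjoint union of subdivided trees; if $\tilde x$ had neighbors in two different translates $\gamma Z \ne \gamma' Z$, then because each $\gamma Z$ is connected and the translates are permuted freely by $\Gamma(g)$, one produces a non-backtracking closed path in $X_g$ of length at most, say, $6$ (going $\tilde x \to \gamma Z$, along a short arc, back through a $\Gamma(g)$-identification to $\gamma' Z$, and back to $\tilde x$) — actually the relevant bound: any such collision forces a cycle in $X_g$ through $x$ whose length is bounded by a small constant times the distance structure of $Z$, hence $< \mathrm{girth}(X_g)$, contradiction. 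So only a single translate $\gamma Z$ is relevant near $\tilde x$, and the number of $Y_g$-neighbors of $x$ is exactly the number of $Z$-neighbors of the vertex $\tilde x \in N(Z)$, which is $2$ by Lemma~\ref{lem: bad vertex expansion in tree}.

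\medskip

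\textbf{Expected main obstacle.} The delicate point is Step 2: making precise that no two distinct $\Gamma(g)$-translates of $Z$ can both contribute a neighbor to the same vertex $x \in N(Y_g)$. This is exactly where the girth hypothesis $\mathrm{girth}(X_g) > \tfrac{4}{3}\log_q|X_g|$ enters, and one must carefully bound the length of the cycle that a hypothetical double-contribution would create — the bound should be a genuine constant (independent of $m$), coming from the local geometry of the subdivided tree $Z$ near a boundary vertex, so that for $m$ large it is dominated by the girth. An alternative, perhaps cleaner, route avoiding cycle-chasing: use that $Y_g$ is itself a Morgenstern Ramanujan graph with girth $> \tfrac{4}{3}\log_q|Y_g|$, combined with the ``$\gamma_k \leftrightarrow \gamma_k^{-1}$'' symmetry observed in the proof of Theorem~\ref{thm:quantum ergodicity}: if $\gamma_k x \in Y_g$ then $\gamma_k^{-1} x = \delta_k^{-1}(\gamma_k x) \in Y_g$ too, so the $Y_g$-neighbors of $x$ come in pairs $\{\gamma_k x, \gamma_k^{-1}x\}$; one then only needs to show at least one, and at most one, such pair occurs — ``at least one'' is the tree statement, and ``at most one'' again reduces to girth, but now phrased as: two distinct such pairs would give a short cycle in $X_g$. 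I would write up whichever of these two arguments compiles most cleanly, likely the pairing argument since the machinery ($\delta_k = \gamma_k^2$, free action, girth of $X_g$) is already fully in place.
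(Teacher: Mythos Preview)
Your Step~2 contains a genuine gap: the claimed ``short cycle'' need not exist. Concretely, suppose $x\notin Y_g$ has $\gamma_k x\in Y_g$ and $\gamma_l x\in Y_g$ with $\gamma_l\ne\gamma_k^{\pm 1}$. Since $Y_g$ is a subgroup of $\Gamma/\Gamma(g)$, this forces $\gamma_k\gamma_l^{-1}\in Y_g$, i.e.\ $\gamma_k\gamma_l^{-1}=w\cdot z$ in $\Gamma$ with $w\in\Gamma'$ and $z\in\Gamma(g)$. The resulting closed path in $X_g$ corresponds to the reduced word $z=w^{-1}\gamma_k\gamma_l^{-1}$, whose length is $2\lvert w\rvert_\delta+O(1)$. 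There is no a priori upper bound on $\lvert w\rvert_\delta$; nothing prevents $w$ from being a very long word in the $\delta_i$'s, so the cycle you produce can easily exceed the girth. Your alternative ``pairing'' route hits the same wall: the pairing $\{\gamma_k x,\gamma_k^{-1}x\}$ is correct and gives evenness, but showing that only \emph{one} pair occurs again reduces to bounding $\lvert w\rvert_\delta$, which girth alone does not do. (Incidentally, your worry about $x\in Y_g$ is a non-issue in the bipartite case under consideration: $Y_g$ lies entirely on one side of $X_g$ since $\Gamma'$ consists of even-length words, so $Y_g\cap N(Y_g)=\emptyset$ and the induced subgraph on $Y_g$ has no edges at all.)

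The paper's proof takes a completely different and much shorter route. It observes that the number of $Y_g$-neighbours of $x$ depends only on the left coset $xY_g$, because $Y_g$ is a subgroup and left translation by elements of $\Gamma'/(\Gamma'\cap\Gamma(g))$ gives automorphisms of $X_g$ preserving $Y_g$. Hence if \emph{one} vertex $x\in N(Y_g)$ had $\ge 3$ (in fact $\ge 4$, by your pairing) neighbours in $Y_g$, then an entire coset of size $\lvert Y_g\rvert$ would, forcing $\lvert N(Y_g)\rvert\le\frac{q-1}{2}\lvert Y_g\rvert$. This contradicts Kahale's lower bound $\lvert N(Y_g)\rvert\ge\frac{q+1}{2}\lvert Y_g\rvert(1-o(1))$ for Ramanujan graphs (Theorem~\ref{thm:Kahale vertex expansion}), since $\lvert Y_g\rvert=O(\sqrt{\lvert X_g\rvert})=o(\lvert X_g\rvert)$. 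So the upper bound of two is obtained \emph{spectrally}, not via girth.
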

\begin{proof}
If $x\in N(Y_g)$, then $x = \gamma_i y$, for $y\in Y_g$, and $\gamma_i$ a generator. Therefore, $x$ is connected in $X_g$ to both $y = \gamma_i^{-1} x$ and $\gamma_i^2y = \gamma_i x \in Y_g$. Therefore, every neighbor of $Y_g$ is connected to at least two vertices of it (we discussed this part of the proof at the beginning of the section). 

Assume by contradiction that $x\in N(Y_g)$ is connected to more than two vertices of $Y_g$, then by applying the automorphism of the Cayley graph $X_g$ defined by subgroup $\Gamma'/\left(\Gamma'\cap\Gamma\left(g\right)\right)$, we would get $O\left(|Y_g|\right)$ other neighbors of $Y_g$ which are connected to more than 2 vertices of $Y_g$. Then there is some $\delta>0$ such that on average a neighbor of $Y_g$ is connected to $2+\delta$ vertices in $Y_g$. This contradicts Kahale's vertex expansion Theorem~\ref{thm:Kahale vertex expansion}.

\end{proof}

\begin{rem}
Assume that we choose $\tilde{h}\left(s\right)$, $\deg\tilde{h}\left(s\right)=m$,
such that it splits in the extension to $\tilde{h}\left(t^{2}\right)=h\left(t\right)h\left(-t\right)$.
Then a similar construction still works -- we look at $g\left(u\right)=g_{1}\left(u\right)g_{2}\left(u\right)$,
where $g_{1}\left(u\right)$ corresponds to $h\left(t\right)$ and
$g_{2}\left(u\right)$ corresponds to $h\left(-t\right)$. Then $\Gamma\left(g\right)=\Gamma\left(g_{1}\right)\cap\Gamma\left(g_{2}\right)$
defines a Cayley graph $X_{g}=\Cayley\left(\Gamma/\Gamma\left(g\right),\left\{ \gamma_{1},...,\gamma_{q+1}\right\} \right)$.
There is an embedding $F\colon Y_{g}\cong X_{\tilde{h}}\to X_{g}$, which
extends to a graph map on the subdivision graph $Y_{g}^{\prime}$
of $Y_{g}$.

In this case $Y_{g}$ will be a Cayley graph on $\PSL_{2}\left(\F_{q^{m}}\right)$,
while $X_{g}$ will be a Cayley graph on some subgroup between $\PSL_{2}\left(\F_{q^{m}}\right)\times\PSL_{2}\left(\F_{q^{m}}\right)$
and $\PGL_{2}\left(\F_{q^{m}}\right)\times\PGL_{2}\left(\F_{q^{m}}\right)$.
So again we get a similar map from a small $\left(q+1,2\right)$-biregular
graph and a big $\left(q+1\right)$-regular Ramanujan graph.
\end{rem}

\subsection{Explicit Generators}

We shortly describe how to construct our graphs explicitly. Assume we are given monic irreducible $\tilde{h}\left(s\right)\in\F_{q}\left(s\right)$,
$\deg\tilde{h}=m$, which is inert (remains irreducible) in the extension to $\F_{q}\left(t\right)$.

Let $h\left(t\right)=\tilde{h}\left(t^{2}\right)$ be the irreducible
polynomial of degree $2m$ in $\F_{q}\left(t\right)$ above $\tilde{h}\left(s\right)$.
Let $\epsilon\in\F_{q}$ be a non-square and let $\boldsymbol{i}\in\F_{q}\left(t\right)/h\left(t\right)\F_{q}\left(t\right)$
be a square root of $\epsilon$ (which exists since $h$ is of even
degree). Consider the following elements in $\PGL_{2}\left(\F_{q}\left(t\right)/h\left(t\right)\F_{q}\left(t\right)\right)\cong\PGL_{2}\left(\F_{q^{2m}}\right)$:
\begin{align*}
\gamma_{k} & =\left(\begin{array}{cc}
t+1 & \left(c_{k}-d_{k}\boldsymbol{i}\right)\\
\left(c_{k}+d_{k}\boldsymbol{i}\right)\left(t^{2}-1\right) & t+1
\end{array}\right)\\
\delta_{k} & =\gamma_{k}^{2}=\left(\begin{array}{cc}
1 & \left(c_{k}-d_{k}\boldsymbol{i}\right)\\
\left(c_{k}+d_{k}\boldsymbol{i}\right)\left(t^{2}-1\right) & 1
\end{array}\right)
\end{align*}
(we work modulu center, so it makes sense). Recall that $\left(c_{k},d_{k}\right)$
are the $\left(q+1\right)$ solutions to $\epsilon d^{2}-c^{2}=1$.

Then $\gamma_{1},...,\gamma_{q+1}$ generate a Ramanujan Cayley graph
isomorphic to the Morgenstern Cayley graph of the monic polynomial
corresponding to $\left(u-2\right)^{2m}h\left(\frac{u}{2-u}\right)$.
The elements $\left\{ \gamma_{1},...,\gamma_{q+1}\right\} $ generate
$\PGL_{2}\left(\F_{q^{2m}}\right)$ if and only if $\left(\frac{2t\left(t+1\right)}{h\left(t\right)}\right)=\left(\frac{\tilde{h}\left(0\right)}{q}\right)\left(\frac{\tilde{h}\left(1\right)}{q}\right)=-1$.
The elements $\delta_{k}$ generate a Ramanujan Cayley graph isomorphic to
$\PGL_{2}\left(\F_{q^{m}}\right)$. This is simplest to see when $m$
itself is even, since then $\epsilon\in\F_{q}\left(t^{2}\right)/\tilde{h}\left(t^{2}\right)\F_{q}\left(t^{2}\right)\subset\F_{q}\left(t\right)/h\left(t\right)\F_{q}\left(t\right)$,
and the $\delta_{k}$ are the generators given in \cite[Equation (14)]{morgenstern1994existence}.

\section{\label{sec:The-Method}The Closed Orbit Method}

The goal of this section is to formalize and prove Theorem~\ref{thm:Abstract theorem},
which we do in Theorem~\ref{thm:Good Pair Theorem}. The proof follows the standard construction of arithmetic cocompact lattices,
and keeps track of its behavior when taking field extensions. The essential results are the second part of Theorem~\ref{thm:choice of Gamma} and Lemma~\ref{lem:intersection of congreunce}, which relates lattices in a group with a lattice in a subgroup.

We use several standard results about semisimple groups over global fields, which
may be found in Prasad's work \cite{prasad1977strong}. To keep our
arguments short, we assume familiarity with the theory.

Here is our general setting: Let $k$ be a global field (a number field or a finite extension of $\F_{q}\left(t\right)$) and let $l$ be
a finite separable field extension of $k$. We denote the places of
$k$ using the letter $v$ with $k_{v}$ being the field completion.
Similarly, we denote the places of $l$ by the letter $w$, with $l_{w}$
being the field completion. For the non-Archimedean places, we let
$\O_{v}$ (resp. $\O_{w}$) be the ring of integers of $k_{v}$ (resp.
$l_{w}$), and let $\pi_{v}$ (resp. $\pi_{w}$) be a uniformizer
of $k_{v}$ (resp. $l_{w}$).

Let $\A_{k}$ and $\A_{l}$ be the adele rings of $k$ and $l$, i.e.,
\[
\A_{k}=\left\{ \left(x_{v}\right)\in\prod_{v}k_{v}:x_{v}\in\O_{v}\text{ for almost every \ensuremath{v}}\right\} ,
\]
and similarly for $\A_{l}$.

Let $\boldsymbol{G}$ be a semisimple algebraic group defined over
$k$, defined by set of equations as a subgroup of $\SL_{N}$. We
will use the same notation $\boldsymbol{G}$ for its extension of
scalars to $l$. We assume that $\boldsymbol{G}$ is connected, simply
connected, and almost-simple over $k$ and over $l$. Let $\boldsymbol{\tilde{G}}$
be the split form of $\boldsymbol{G}$, so over the algebraic closure
$\overline{k}$ of $k$ (and $l$), $\boldsymbol{G}\left(\overline{k}\right)\cong\boldsymbol{\tilde{G}}\left(\overline{k}\right)$.

Notice that $\boldsymbol{G}\left(k_{v}\right)$ and $\boldsymbol{G}\left(l_{w}\right)$
are well-defined, and have a natural topology coming from the embedding
$\boldsymbol{G}\left(k_{v}\right)\subset\SL_{N}\left(k_{v}\right)\subset M_{N}\left(k_{v}\right)$.
In addition, $\boldsymbol{G}\left(\O_{v}\right)$ and $\boldsymbol{G}\left(\O_{w}\right)$
are also well-defined for almost every $v$ and almost every $w$.
For almost every place $v$ of $k$, the isomorphism $\boldsymbol{G}\left(\overline{k}\right)\cong\boldsymbol{\tilde{G}}\left(\overline{k}\right)$
is actually defined over $k_{v}$, so there is an isomorphism $\boldsymbol{G}\left(k_{v}\right)\cong\boldsymbol{\tilde{\boldsymbol{G}}}\left(k_{v}\right)$.
Moreover, after dropping a finite number of places $v$ of $k$, $\boldsymbol{G}\left(\O_{v}\right)\cong\boldsymbol{\tilde{G}}\left(\O_{v}\right)$
is well-defined, and is a maximal compact open subgroup of $\boldsymbol{G}\left(k_{v}\right)$.
For such $v$, let $w_{1},...,w_{m}$ be the places of $l$ over $v$.
Then $\boldsymbol{G}\left(l_{w_{i}}\right)\cong\boldsymbol{\tilde{G}}\left(l_{w_{i}}\right)$,
$\boldsymbol{G}\left(\O_{w_{i}}\right)\cong\boldsymbol{\tilde{G}}\left(\O_{w_{i}}\right)$
and there is a diagonal embedding $\boldsymbol{G}\left(k_{v}\right)\to\prod_{i=1}^{m}\boldsymbol{G}\left(k_{w_{i}}\right)$,
such that $\boldsymbol{G}\left(k_{v}\right)\cap\prod\boldsymbol{G}\left(\O_{w_{i}}\right)=\boldsymbol{G}\left(\O_{v}\right)$.

For the finite number of non-Archimedean places where the above
does not hold, the embedding $\boldsymbol{G}\left(k_{v}\right)\to\prod_{i=1}^{m}\boldsymbol{G}\left(k_{w_{i}}\right)$
is still well-defined, and for every compact open subgroup of $\prod_{i=1}^{m}K_{w_{i}}\le\prod_{i=1}^{m}\boldsymbol{G}\left(l_{w_{i}}\right)$,
the subgroup $K_{v}=\prod_{i=1}^{m}K_{w_{i}}\cap\boldsymbol{G}\left(k_{v}\right)\le\boldsymbol{G}\left(k_{v}\right)$
is a compact open subgroup.

By the above, the adele group
\[
\boldsymbol{G}\left(\A_{k}\right)=\left\{ \left(g_{v}\right)\in\prod_{v}\boldsymbol{G}\left(k_{v}\right):g_{v}\in\boldsymbol{G}\left(\O_{v}\right)\text{ for almost every \ensuremath{v}}\right\}
\]
is well-defined, and has a natural topology coming from the topology
of $\A_{k}$ and the embedding $\boldsymbol{G}\left(\A_{k}\right)\subset\SL_{N}\left(\A_{k}\right)\subset M_{N}\left(\A_{k}\right)$.
We have a diagonal embedding $\boldsymbol{G}\left(k\right)\to\boldsymbol{G}\left(\A_{k}\right)$.
The same is true for $l$.

The local embeddings $\boldsymbol{G}\left(k_{v}\right)\to\prod_{i=1}^{m}\boldsymbol{G}\left(k_{w_{i}}\right)$
extend to a global embedding
$\boldsymbol{G}\left(\A_{k}\right)\to\boldsymbol{G}\left(\A_{l}\right).$
Under this embedding, it holds that
$\boldsymbol{G}\left(\A_{k}\right)\cap\boldsymbol{G}\left(l\right)=\boldsymbol{G}\left(k\right)$.
We consider all of our groups as subgroups of $\boldsymbol{G}\left(\A_{l}\right)$.
For finite sets of places $V$, $W$, there are natural projection
maps
\begin{align*}
P_{V} & :\boldsymbol{G}\left(\A_{k}\right)\to\prod_{v\in V}\boldsymbol{G}\left(k_{v}\right)\\
P_{W} & :\boldsymbol{G}\left(\A_{l}\right)\to\prod_{w\in W}\boldsymbol{G}\left(l_{w}\right).
\end{align*}

Recall that $\boldsymbol{G}$ is called isotropic over a field $F$
if $G\left(F\right)$ contains a non-trivial split $F$-torus, and is called anisotropic over $F$ otherwise. If $F$ is a local field, being anisotropic
is equivalent to $G\left(F\right)$ being compact in the appropriate
topology. We assume that $\boldsymbol{G}$ is anisotropic over $l$,
which implies that it is also anisotropic over $k$.

We have the following two basic theorems:
\begin{thm}[Borel, Behr, Harder]
\label{thm:Gk in Ga is lattice}$\boldsymbol{G}\left(k\right)\le\boldsymbol{G}\left(\A_{k}\right)$
and $\boldsymbol{G}\left(l\right)\le\boldsymbol{G}\left(\A_{l}\right)$
are cocompact lattices.
\end{thm}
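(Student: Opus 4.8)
The plan is to reduce to the classical theorem of Borel–Harder–Behr on arithmetic groups. First I would recall that since $\boldsymbol{G}$ is semisimple, connected, simply connected and almost-simple over the global field $k$ (respectively $l$), and since we have assumed $\boldsymbol{G}$ is anisotropic over $l$ (hence, by restriction of scalars, anisotropic over $k$ as well), the quotient $\boldsymbol{G}(k)\backslash\boldsymbol{G}(\A_k)$ is compact. This is exactly the statement that $\boldsymbol{G}(k)$ is a \emph{cocompact} lattice in $\boldsymbol{G}(\A_k)$: the anisotropy over $k$ is precisely the criterion (going back to Borel–Harder–Behr, and in the function-field case Harder) that rules out the cusps which otherwise appear when $\boldsymbol{G}$ is $k$-isotropic. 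The same argument applies verbatim to $\boldsymbol{G}(l)\le\boldsymbol{G}(\A_l)$ using anisotropy over $l$.

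The two things one needs to check for ``lattice'' are that $\boldsymbol{G}(k)$ is discrete in $\boldsymbol{G}(\A_k)$ and that the quotient carries a finite invariant measure; cocompactness then upgrades the finite-measure statement. Discreteness follows from the fact that $\boldsymbol{G}(k)$ sits inside $\SL_N(k)$, which is discrete in $\SL_N(\A_k)$ because $k$ is discrete in $\A_k$ (a product formula / fundamental domain argument), intersected with the closed subset $\boldsymbol{G}(\A_k)\subset\SL_N(\A_k)$. The existence of a $\boldsymbol{G}(\A_k)$-invariant measure on the quotient is automatic since $\boldsymbol{G}(\A_k)$ is unimodular (semisimple groups over local fields are unimodular, and the restricted product of unimodular groups with compact-open subgroups is unimodular), and then finiteness plus compactness of the quotient is the content of the Borel–Harder theorem under the anisotropy hypothesis. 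For the function-field case one invokes Harder's reduction theory.

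The main obstacle — or rather the only substantive input — is the compactness of $\boldsymbol{G}(k)\backslash\boldsymbol{G}(\A_k)$ under $k$-anisotropy; everything else is formal. Since the paper explicitly says it ``assume[s] familiarity with the theory'' and cites Prasad's \cite{prasad1977strong}, I would simply invoke the cited literature for this compactness statement rather than reproving reduction theory. So the write-up is essentially: state that anisotropy over $l$ implies anisotropy over $k$; cite Borel–Harder–Behr (and Harder in the function-field case, as in \cite{prasad1977strong}) for the fact that anisotropy over the global field is equivalent to cocompactness of the global points in the adelic points; note discreteness from $k\hookrightarrow\A_k$ discrete; and conclude both assertions at once, the $l$-statement being the $k$-statement applied to the extension of scalars of $\boldsymbol{G}$ to $l$.
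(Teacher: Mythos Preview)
Your proposal is correct and matches the paper's approach: the paper does not prove this theorem at all but simply states it with attribution to Borel, Behr, and Harder, treating it as a standard input from the literature (with a general pointer to \cite{prasad1977strong}). Your write-up supplies more justification than the paper does, but the substance---cite the classical anisotropy-implies-cocompactness theorem and note discreteness from $k\hookrightarrow\A_k$---is exactly the intended reduction.
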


\begin{thm}[The strong approximation property -- Platonov, Prasad]
$G$ satisfies the strong approximation property over $k$ and over
$l$, i.e., for every place $v$ where $k$ is isotropic, $\boldsymbol{G}\left(k\right)\boldsymbol{G}\left(k_{v}\right)$
is dense in $\boldsymbol{G}\left(\A_{k}\right)$. The same is true
for $l$.
\end{thm}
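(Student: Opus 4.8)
\medskip

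\noindent\emph{Proof plan.} This is the classical strong approximation theorem for simply connected semisimple groups --- due to Kneser and Platonov over number fields and to Prasad \cite{prasad1977strong} over function fields, which is the case actually used here --- so in the body of the paper it is simply cited; what follows is the shape of the argument I would give. First I would reduce to $\boldsymbol{G}$ absolutely almost simple: a connected, simply connected, almost $k$-simple group is $k$-isomorphic to $\mathrm{R}_{k'/k}\boldsymbol{G}'$ for a finite separable extension $k'/k$ and an absolutely almost simple, simply connected $\boldsymbol{G}'$ over $k'$, and then $\boldsymbol{G}(\A_{k})=\boldsymbol{G}'(\A_{k'})$, $\boldsymbol{G}(k)=\boldsymbol{G}'(k')$, while $\boldsymbol{G}$ being $k_{v}$-isotropic forces $\boldsymbol{G}'$ to be $k'_{v'}$-isotropic for some $v'\mid v$; so the statement transfers. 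Fixing such a $v$, let $\overline{E}$ be the closure of $\boldsymbol{G}(k)\boldsymbol{G}(k_{v})$ in $\boldsymbol{G}(\A_{k})$; the goal is $\overline{E}=\boldsymbol{G}(\A_{k})$. As $\overline{E}$ is a closed subgroup containing the cocompact lattice $\boldsymbol{G}(k)$ (Theorem~\ref{thm:Gk in Ga is lattice}), the quotient $\overline{E}\backslash\boldsymbol{G}(\A_{k})$ is automatically compact, so the only issue is to show it is a point. I note that in the setting of this paper $\boldsymbol{G}$ is anisotropic over $k$, which is exactly the difficult case below.

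For context, when $\boldsymbol{G}$ \emph{is} isotropic over $k$ the argument is soft. One picks a proper parabolic $k$-subgroup with unipotent radical $\boldsymbol{U}$ and its opposite $\boldsymbol{U}^{-}$; these are $k$-split unipotent groups, for which strong approximation is elementary --- it amounts to producing an element of $k$ with prescribed principal parts at finitely many places and poles confined to $v$ (Riemann--Roch in the function field case; approximation with $v$-integral denominators in the number field case). Hence $\boldsymbol{U}(\A_{k})\subseteq\overline{E}$ and $\boldsymbol{U}^{-}(\A_{k})\subseteq\overline{E}$, and since at each place $w$ the groups $\boldsymbol{U}(k_{w})$ and $\boldsymbol{U}^{-}(k_{w})$ generate $\boldsymbol{G}(k_{w})^{+}=\boldsymbol{G}(k_{w})$ by the Kneser--Tits theorem over local fields, the closed subgroup $\overline{E}$ contains a dense subgroup of $\boldsymbol{G}(\A_{k})$ and hence equals it.

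The \textbf{main obstacle} is precisely the $k$-anisotropic case relevant here --- $\boldsymbol{G}$ the norm-one group of a division algebra --- where there is no proper parabolic $k$-subgroup, so the unipotent argument has no foothold over $k$, and I would invoke \cite{prasad1977strong} rather than reprove it. The flavour of the classical arguments is to proceed isogeny class by isogeny class: for inner forms of type $A$ one transfers density across the reduced-norm exact sequence $1\to\mathrm{SL}_{1}(A)(\A_{k})\to A^{\times}(\A_{k})\xrightarrow{\mathrm{Nrd}}\A_{k}^{\times}\to1$ using the Hasse--Schilling--Maass description of $\mathrm{Nrd}(A^{\times}(k))$ together with weak approximation in $A^{\times}$; the other types reduce, after choosing a maximal $k$-torus with controlled local splitting behaviour, to rank-one subgroups. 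Prasad's theorem supplies a uniform treatment over function fields, and that is what I would cite in the paper --- it is orthogonal to the combinatorial content.

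Finally, the statement over $l$ needs no new idea: $\boldsymbol{G}$ is again connected, simply connected and almost $l$-simple, and if $w$ is a place of $l$ above an isotropic place $v$ of $k$ then $\boldsymbol{G}$ is $l_{w}$-isotropic, so the same argument applies verbatim with $l$ in place of $k$.
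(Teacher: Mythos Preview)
Your reading is correct: the paper does not prove this theorem at all --- it is stated as a black box, with the attribution ``Platonov, Prasad'' and the reference \cite{prasad1977strong}, and is then invoked later (in the proof of Theorem~\ref{thm:choice of Gamma} and the lemma following it) only through the consequence $\boldsymbol{G}(l)G\boldsymbol{K_l}=\boldsymbol{G}(\A_l)$. So there is no ``paper's own proof'' to compare against; your proposal goes well beyond what the paper does by sketching the actual argument.

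Your sketch is a faithful outline of the classical proof: the Weil-restriction reduction to the absolutely almost simple case, the easy isotropic case via strong approximation for split unipotent groups and Kneser--Tits over local fields, and the acknowledgment that the anisotropic case (which is exactly the one arising here, since $\boldsymbol{G}$ is the norm-one group of a division algebra) is the deep part handled by Prasad. The observation that the statement over $l$ requires nothing new is also correct. If anything, you could trim the proposal to a one-line citation matching the paper's own treatment, since the combinatorial content of the paper is, as you say, orthogonal to this.
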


Let $V_{0}$ be a finite set of places of $k$. We let $W_{0}$ be the places of $l$ which lie over the places of $V_{0}$. We
assume that $\boldsymbol{G}$ is isotropic over some place in $V_{0}$,
which implies that it is isotropic over some place in $W_{0}$. If
we work over number fields, we further assume that $\boldsymbol{G}$
is anisotropic over all the Archimedean places of $l$ which are not
in $W_{0}$ (and therefore also over all the Archimedean places of
$k$ which are not in $V_{0}$).

We collect our various assumptions into the following definition.
\begin{defn}
\label{def:good pair}Assume that there exist:
\begin{enumerate}
\item A global field $k$ and $l$ a finite separable field extension of
$k$.
\item A semisimple algebraic group $\boldsymbol{G}$ defined over $k$ which
is connected, simply connected, and almost-simple over $k$ and over
$l$. Moreover, $\boldsymbol{G}$ is anisotropic over $l$.
\item A finite set $V_{0}$ of places of $k$, with $W_{0}$ the places
of $l$ over the places of $V_{0}$. The group $\boldsymbol{G}$ is
isotropic over some place in $V_{0}$, and is anisotropic over all
the Archimedean places of $l$ which are not in $W_{0}$.
\end{enumerate}
Then we say that the pair $\left(G,H\right)$ of a group $G$
and subgroup $H\le G$ is good, where

\begin{align*}
G & =\prod_{w\in W_{0}}\boldsymbol{G}\left(l_{w}\right)\\
H & =\prod_{v\in V_{0}}\boldsymbol{G}\left(k_{v}\right).
\end{align*}
\end{defn}

Given a good pair $\left(G,H\right)$, our next goal is to choose
a cocompact lattice $\Gamma\le G$, such that $\Gamma\cap H\le H$
is also a cocompact lattice.

Notice that $G$ embeds into $\boldsymbol{G}\left(\A_{l}\right)$
by
\[
G\cong\prod_{w\in W_{0}}\boldsymbol{G}\left(l_{w}\right)\times\prod_{w\notin W_{0}}\left\{ \operatorname{id}\right\} \le\boldsymbol{G}\left(\A_{l}\right)
\]
and similarly $H$ embeds into $\boldsymbol{G}\left(\A_{k}\right)$.

We let
\begin{align*}
\boldsymbol{G}\left(\A_{l}^{W_{0}}\right) & =\left\{ \left(g_{w}\right)_{w}\in\boldsymbol{G}\left(\A_{l}\right):\forall w\in W_{0},g_{w}=\id\right\} \\
\boldsymbol{G}\left(\A_{k}^{V_{0}}\right) & =\left\{ \left(g_{v}\right)_{v}\in\boldsymbol{G}\left(\A_{k}\right):\forall v\in V_{0},g_{v}=\id\right\} ,
\end{align*}
with the natural induced topology. It holds that $\boldsymbol{G}(\A_l)=G \boldsymbol{G}(\A_l^{W_0})$ and $\boldsymbol{G}(\A_k)=H \boldsymbol{G}(\A_k^{V_0})$.

For $w\not\in W_{0}$ non-Archimedean, fix a compact open subgroup
$K_{w}\le\boldsymbol{G}\left(l_{w}\right)$, equals almost everywhere
to $\boldsymbol{G}\left(\O_{w}\right)$. For the $w\notin W_{0}$
Archimedean, let $K_{w}=\boldsymbol{G}\left(l_{w}\right)$, which
is compact by our assumptions. For $v\notin V_{0}$, we let $K_{v}=\prod_{i=1}^{m}K_{w_{i}}\cap\boldsymbol{G}\left(k_{v}\right)$,
where $w_{1},...,w_{m}$ are the places of $l$ over $v$. By the
above, $K_{v}$ is a compact open subgroup of $\boldsymbol{G}\left(k_{v}\right)$,
which is equal almost everywhere to $\boldsymbol{G}\left(\O_{v}\right)$.

The choices of $K_{w}$ define the compact open subgroup $\boldsymbol{K_{l}}=\prod_{w\in W_{0}}\left\{ \id\right\} \times\prod_{w\not\in W_{0}}K_{w}$
of $\boldsymbol{G}\left(\A_{l}^{W_{0}}\right)$. Then $\boldsymbol{K_{k}}=\boldsymbol{K_{l}}\cap\boldsymbol{G}\left(\A_{k}\right)$
is a compact open subgroup of $\boldsymbol{G}\left(\A_{k}^{V_{0}}\right)$
of the form $\boldsymbol{K_{k}}=\prod_{v\in V_{0}}\left\{ \id\right\} \times\prod_{v\not\in V_{0}}K_{v}$.
Notice that $\boldsymbol{K_{l}}$ and $G$ commute with each other,
and $G\boldsymbol{K_{l}}$ is an open subgroup of $\boldsymbol{G}\left(\A_{l}\right)$.

We choose
\begin{align}
\Gamma & =P_{W_{0}}\left(\boldsymbol{G}\left(l\right)\cap G\boldsymbol{K_{l}}\right)\label{eq:Gamma def}\\
\Gamma' & =P_{V_{0}}\left(\boldsymbol{G}\left(k\right)\cap H\boldsymbol{K_{k}}\right),\nonumber
\end{align}
so that $\Gamma,\Gamma'$ are subgroups of $G=\prod_{w\in W_{0}}\boldsymbol{G}\left(k_{w}\right)$
and $H=\prod_{v\in V_{0}}\boldsymbol{G}\left(k_{v}\right)$.
\begin{thm}
\label{thm:choice of Gamma}
\begin{enumerate}
\item $\Gamma$ (resp. $\Gamma^{\prime}$) is
a cocompact lattice in $G$ (resp. $H$). \item It holds that $\Gamma^{\prime}=H\cap\Gamma$.
\end{enumerate}
\end{thm}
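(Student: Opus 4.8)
For part (1) I would run the standard construction of arithmetic cocompact lattices. Put $U=G\boldsymbol{K_l}$, which is an open subgroup of $\boldsymbol{G}(\A_l)$; since $G$ and $\boldsymbol{K_l}$ commute and meet only in $\{\id\}$, we have $U\cong G\times\boldsymbol{K_l}$ with $\boldsymbol{K_l}$ compact. By Theorem~\ref{thm:Gk in Ga is lattice}, applicable because $\boldsymbol{G}$ is anisotropic over $l$, the group $\boldsymbol{G}(l)$ is a cocompact lattice in $\boldsymbol{G}(\A_l)$; hence $\Lambda:=\boldsymbol{G}(l)\cap U$ is discrete in $U$, and since $U\boldsymbol{G}(l)$ is open, hence closed, in $\boldsymbol{G}(\A_l)$, the quotient $U/\Lambda\cong U\boldsymbol{G}(l)/\boldsymbol{G}(l)$ is compact. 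So $\Lambda$ is a cocompact lattice in $U\cong G\times\boldsymbol{K_l}$, and $\Gamma=P_{W_0}(\Lambda)$ is its image under the projection $U\to G$, whose kernel $\boldsymbol{K_l}$ is compact. Then $G/\Gamma$ is a continuous image of the compact $U/\Lambda$, and for discreteness it suffices to note that any sequence $\gamma_n\to e$ in $\Gamma$ lifts to $\lambda_n\in\boldsymbol{G}(l)$ with $(\lambda_n)_w\in K_w$ for $w\notin W_0$; for $n$ large the $W_0$-components lie in a fixed compact neighbourhood of $e$, so the $\lambda_n$ lie in a fixed compact subset of $\boldsymbol{G}(\A_l)$, hence take finitely many values, forcing $\gamma_n=e$ eventually. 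Thus $\Gamma$ is a cocompact lattice in $G$; the identical argument over $k$ (using that anisotropy over $l$ forces anisotropy over $k$) shows $\Gamma'$ is a cocompact lattice in $H$.

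For part (2) the inclusion $\Gamma'\subseteq H\cap\Gamma$ is immediate: if $\mu\in\boldsymbol{G}(k)\cap H\boldsymbol{K_k}$ then $\mu\in\boldsymbol{G}(l)$ and, because $H\subseteq G$ and $\boldsymbol{K_k}=\boldsymbol{K_l}\cap\boldsymbol{G}(\A_k)\subseteq\boldsymbol{K_l}$, also $\mu\in G\boldsymbol{K_l}$; moreover $P_{W_0}(\mu)$ is exactly the image under $H\hookrightarrow G$ of $P_{V_0}(\mu)\in H$, so $P_{V_0}(\mu)\in\Gamma\cap H$.

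For the reverse inclusion $H\cap\Gamma\subseteq\Gamma'$, take $\gamma\in\Gamma\cap H$ and a preimage $\lambda\in\boldsymbol{G}(l)\cap G\boldsymbol{K_l}$ with $P_{W_0}(\lambda)=\gamma$. Since $\gamma$ lies in the image of $H=\prod_{v\in V_0}\boldsymbol{G}(k_v)$ under the diagonal embedding, there is $h=(h_v)_{v\in V_0}\in H$ with $\lambda_w$ equal to the image of $h_v$ under $k_v\hookrightarrow l_w$ for every $v\in V_0$ and every $w\mid v$. Fixing one $v\in V_0$ and a matrix entry $x\in l$ of $\lambda$, this says that in $l\otimes_k k_v\cong\prod_{w\mid v}l_w$ (using separability of $l/k$) we have $x\otimes 1=1\otimes x_v$, where $x_v\in k_v$ is the corresponding entry of $h_v$; writing $x$ and $x_v$ in a $k$-basis $\{e_i\}$ of $l$ with $e_1=1$ forces $x=x_v\in k$. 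Hence all entries of $\lambda$ lie in $k$, so $\lambda\in\SL_N(k)\cap\boldsymbol{G}(l)=\boldsymbol{G}(k)$ and $h=P_{V_0}(\lambda)$. Moreover $\lambda\in G\boldsymbol{K_l}$ gives $\lambda_w\in K_w$ for $w\notin W_0$, so, now that $\lambda\in\boldsymbol{G}(\A_k)$, for $v\notin V_0$ we get $\lambda_v\in\prod_{w\mid v}K_w\cap\boldsymbol{G}(k_v)=K_v$, i.e.\ $\lambda\in H\boldsymbol{K_k}$. Therefore $\lambda\in\boldsymbol{G}(k)\cap H\boldsymbol{K_k}$ and $\gamma=P_{W_0}(\lambda)$ is the image of $P_{V_0}(\lambda)$, so $\gamma\in\Gamma'$.

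The only non-formal step is the reverse inclusion in part (2): the crux is the observation that, because $W_0$ contains \emph{every} place of $l$ above each $v\in V_0$, the diagonal embedding $H\hookrightarrow G$ forces a point of $\boldsymbol{G}(l)$ whose $W_0$-coordinates come from $H$ to lie in $\boldsymbol{G}(k)$ — this is exactly where separability of $l/k$ and the decomposition $l\otimes_k k_v\cong\prod_{w\mid v}l_w$ are used, and it would break for a smaller choice of $W_0$. Everything else is the standard topology of lattices (an open subgroup has closed, hence well-behaved, image in the quotient by a cocompact lattice) together with routine bookkeeping of local components; strong approximation, by contrast, I expect to enter only afterwards, when comparing the indices in the congruence tower.
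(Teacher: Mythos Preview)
Your proof is correct. For part~(2) your argument is essentially the paper's, just with the key step made explicit: the paper asserts $P_{W_0}(\boldsymbol{G}(l))\cap H = P_{W_0}(\boldsymbol{G}(k))\cap H$ without justification, and you supply the reason via the decomposition $l\otimes_k k_v\cong\prod_{w\mid v}l_w$ and a basis argument. This is exactly the content behind the paper's terse chain of equalities.

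For part~(1) there is a genuine, if minor, difference in route. The paper invokes strong approximation already at this stage: it uses $\boldsymbol{G}(l)G\boldsymbol{K_l}=\boldsymbol{G}(\A_l)$ to identify $\Gamma\backslash G$ with the full adelic double quotient $\boldsymbol{G}(l)\backslash\boldsymbol{G}(\A_l)/\boldsymbol{K_l}$, and reads off the lattice property from that. Your argument is more elementary: you only use that $U=G\boldsymbol{K_l}$ is an open \emph{subgroup}, so $U\boldsymbol{G}(l)$ is a union of $U$-cosets and hence clopen, giving $\Lambda\backslash U$ as a closed piece of the compact $\boldsymbol{G}(l)\backslash\boldsymbol{G}(\A_l)$; then you project along the compact factor $\boldsymbol{K_l}$. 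This suffices for the theorem as stated and confirms your expectation that strong approximation is not yet needed. What the paper's approach buys is the adelic identification itself, which it immediately reuses to show that the reduction maps $p_{w_1}$, $p'_{v_1}$ are onto; with your approach one simply invokes strong approximation at that later point instead, exactly as you anticipated.
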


\begin{proof}
Most of this theorem is standard, and we avoid giving the full details.
See, for example, \cite[Lemma 3.1]{morgenstern1994existence} for a similar statement.

The discreteness of $\Gamma$ in $G$ essentially follows from the
discreteness of $l\cap\left(\prod_{w\notin W_{0}}\O_{w}\right)$ in
$\prod_{w\in W_{0}}l_{w}$.

By Theorem~\ref{thm:Gk in Ga is lattice}, $G\left(l\right)$ is
a cocompact lattice in $\boldsymbol{G}\left(\A_{l}\right)$. By the
strong approximation property and our assumptions, $\boldsymbol{G}\left(l\right)G$
is dense in $\boldsymbol{G}\left(\A_{l}\right)$. Moreover, $G\boldsymbol{K}_{l}$
is open in $\boldsymbol{G}\left(\A_{l}\right)$. This implies that
\[
\boldsymbol{G}\left(l\right)G\boldsymbol{K}_{l}=\boldsymbol{G}\left(\A_{l}\right).
\]
Therefore, the map
\[
G\to G\left(l\right)\backslash\boldsymbol{G}\left(\A_{l}\right)/\boldsymbol{K_{l}}
\]
is onto, and $g,g'\in G$ are sent to the same element if there are
$k\in\boldsymbol{K_{l}}$ and $\gamma\in G\left(l\right)$ such that
\[
\gamma g=g'k.
\]
However, $g$ and $k$ commute, so we have $\gamma=g'g^{-1}k$. This
implies that $\gamma\in\boldsymbol{G}\left(l\right)\cap G\boldsymbol{K_{l}}$
and $g'g^{-1}=P_{W_{0}}\left(\gamma\right)$.

On the other hand, if $\gamma\in\boldsymbol{G}\left(l\right)\cap G\boldsymbol{K_{l}}$
and $g'g^{-1}=P_{W_{0}}\left(\gamma\right)$, then we may choose $k\in\boldsymbol{K_{l}}$
such that $\gamma g=g'k$. We conclude that there is an isomorphism
\begin{align*}
\boldsymbol{G}\left(l\right)\backslash\boldsymbol{G}\left(\A_{l}\right)/\boldsymbol{K_{l}} & \cong P_{W_{0}}\left(\boldsymbol{G}\left(l\right)\cap\boldsymbol{K_{l}}\right)\backslash G\\
 & =\Gamma\backslash G.
\end{align*}
By considering the $G$ right action on the two spaces and the compatibility
of measures, we deduce that $\Gamma$ is a cocompact lattice in $G$.

For the same reasons, $\Gamma'=\pi_{V}\left(\boldsymbol{G}\left(k\right)\cap\boldsymbol{\boldsymbol{K}_{k}}\right)\le H$
is cocompact a lattice in $H$.

Finally, and this is where our proof is slightly less standard, since
$P_{W_{0}}\left(\boldsymbol{G}\left(l\right)\right)\cap H=P_{W_{0}}\left(\boldsymbol{G}\left(k\right)\right)\cap H$,
$\boldsymbol{G}\left(k\right)\cap G\boldsymbol{K_{l}}=\boldsymbol{G}\left(k\right)\cap H\boldsymbol{K_{k}}$,
and $P_{W_{0}}\left(\boldsymbol{G}\left(\A_{k}\right)\right)=P_{V_{0}}\left(\boldsymbol{G}\left(\A_{k}\right)\right)$,
\begin{align*}
\text{\ensuremath{\Gamma\cap H}} & =P_{W_{0}}\left(\boldsymbol{G}\left(l\right)\ensuremath{\cap G\boldsymbol{K_{l}}}\right)\cap H\\
 & =P_{W_{0}}\left(\boldsymbol{G}\left(k\right)\cap G\boldsymbol{K_{l}}\right)\cap H\\
 & =P_{W_{0}}\left(\boldsymbol{G}\left(k\right)\cap H\boldsymbol{\boldsymbol{K}_{k}}\right)\\
 & =P_{V_{0}}\left(\boldsymbol{G}\left(k\right)\cap H\boldsymbol{\boldsymbol{K}_{k}}\right)=\Gamma',
\end{align*}
as needed.
\end{proof}
Next, we construct congruence subgroups of $\Gamma$. Let $v_{1}$
be a place of $k$ where it holds that $K_{v_{1}}=\boldsymbol{G}\left(\O_{v}\right)\cong\boldsymbol{\tilde{G}}\left(\O_{v}\right)$.
Moreover, we want $v_{1}$ to be inert in the extension to $l$, and
let $w_{1}$ be the place of $l$ over it. By Chebotarev's density
theorem, there are infinitely many such $v_{1}$.

Notice that we have a $(\operatorname{mod}w_{1})$-homomorphism $\boldsymbol{G}\left(\O_{w_{1}}\right)\to\boldsymbol{G}\left(\O_{w_{1}}/\pi_{w_{1}}\O_{w_{1}}\right)$.
This map is onto, since the group $\boldsymbol{G}\left(\O_{w_{1}}/\pi_{w_{1}}\O_{w_{1}}\right)\cong\boldsymbol{\tilde{G}}\left(\O_{w_{1}}/\pi_{w_{1}}\O_{w_{1}}\right)$
is generated by its unipotent elements, which may be lifted to $\boldsymbol{G}\left(\O_{w_{1}}\right)$.

As $P_{w_{1}}\left(\boldsymbol{G}\left(l\right)\cap G\boldsymbol{K_{l}}\right)\subset\boldsymbol{G}\left(\O_{w_{1}}\right)$,
we have a homomorphism $p_{w_{1}}\colon \Gamma\to\boldsymbol{G}\left(\O_{w_{1}}/\pi_{w_{1}}\O_{w_{1}}\right)$.
Similarly, we have a homomorphism $p_{v_{1}}^{\prime}\colon\Gamma'\to\boldsymbol{G}\left(\O_{v_{1}}/\pi_{v_{1}}\O_{v_{1}}\right)$.

Now, choose the ``congruence subgroups''
\begin{align}
\Gamma\left(w_{1}\right) & =\ker p_{w_{1}}\vartriangleleft\Gamma\label{eq:Gamma(w) def}\\
\Gamma^{\prime}\left(v_{1}\right) & =\ker p_{v_{1}}^{\prime}\vartriangleleft\Gamma'.\nonumber
\end{align}

The split group $\boldsymbol{\tilde{G}}$ is a Chevalley group, which
implies that its size over a finite field $\F$ (over which it is
defined) satisfies:
\[
\boldsymbol{\tilde{G}}\left(\F\right)=\Theta\left(\left|\F\right|^{\dim G}\right).
\]
Since $v_{1}$ is inert in the extension, $\left|\O_{w_{1}}/\pi_{w_{1}}\O_{w_{1}}\right|=\left|\O_{v_{1}}/\pi_{v_{1}}\O_{v_{1}}\right|^{\left[l:k\right]}$.
Therefore, since $\boldsymbol{G}\cong\tilde{\boldsymbol{G}}$ in our
case,
\begin{equation}
\left|\boldsymbol{G}\left(\O_{v_{1}}/\pi_{v_{1}}\O_{v_{1}}\right)\right|=O\left(\left|\boldsymbol{G}\left(\O_{w_{1}}/\pi_{w_{1}}\O_{w_{1}}\right)\right|^{1/\left[l:k\right]}\right).\label{eq:Group Sizes}
\end{equation}

\begin{lem}
The maps $p_{w_1}$ and $p_{v_1}^\prime$ are onto, so
\begin{align*}
\Gamma/\Gamma\left(w_{1}\right) & \cong\boldsymbol{G}\left(\O_{w_{1}}/\pi_{w_{1}}\O_{w_{1}}\right)\\
\Gamma'/\Gamma'\left(v_{1}\right) & \cong\boldsymbol{G}\left(\O_{v_{1}}/\pi_{v_{1}}\O_{v_{1}}\right).
\end{align*}
\end{lem}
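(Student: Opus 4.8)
The plan is to establish surjectivity of $p_{w_1}$; the case of $p'_{v_1}$ is identical after replacing $l,w_1$ by $k,v_1$, so I will focus on the former. The key point is that the image of $p_{w_1}$ contains all unipotent elements of $\boldsymbol{G}\left(\O_{w_1}/\pi_{w_1}\O_{w_1}\right)$, and that these generate the whole group.

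First I would recall why the reduction map $r:\boldsymbol{G}\left(\O_{w_1}\right)\to\boldsymbol{G}\left(\O_{w_1}/\pi_{w_1}\O_{w_1}\right)$ is itself onto. Since $v_1$ was chosen so that $\boldsymbol{G}\left(\O_{w_1}\right)\cong\boldsymbol{\tilde G}\left(\O_{w_1}\right)$ is a hyperspecial maximal compact and $\boldsymbol{\tilde G}$ is a split Chevalley group scheme over $\O_{w_1}$, the reduction is the map $\boldsymbol{\tilde G}\left(\O_{w_1}\right)\to\boldsymbol{\tilde G}\left(\F\right)$ where $\F=\O_{w_1}/\pi_{w_1}\O_{w_1}$ is the residue field; smoothness of the Chevalley scheme gives surjectivity on points, but for our purposes it suffices to note that $\boldsymbol{\tilde G}\left(\F\right)$ is generated by the images of the root subgroups $U_\alpha(\F)\cong(\F,+)$, and each root-group element $u_\alpha(\bar x)$ lifts along $r$ to $u_\alpha(x)\in\boldsymbol{\tilde G}\left(\O_{w_1}\right)$ by choosing any lift $x\in\O_{w_1}$ of $\bar x\in\F$.

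Second, and this is the real content, I would show that these unipotent generators actually lie in the image of $\Gamma$, not merely of $\boldsymbol{G}\left(\O_{w_1}\right)$. This is where strong approximation enters. Recall $\Gamma=P_{W_0}\left(\boldsymbol{G}\left(l\right)\cap G\boldsymbol{K_l}\right)$, and $p_{w_1}=r\circ P_{w_1}$ restricted to this set; so I need: for every $\bar g\in\boldsymbol{G}\left(\O_{w_1}/\pi_{w_1}\O_{w_1}\right)$ there is $\gamma\in\boldsymbol{G}\left(l\right)\cap G\boldsymbol{K_l}$ with $P_{w_1}(\gamma)\equiv\bar g\Mod{w_1}$. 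Equivalently, the subset $G\,\boldsymbol{K_l}^{(w_1)}\subset\boldsymbol{G}\left(\A_l\right)$ — where $\boldsymbol{K_l}^{(w_1)}$ is $\boldsymbol{K_l}$ with the factor at $w_1$ replaced by the principal congruence subgroup $\ker r$ — has the property that every coset $\gamma\cdot\left(g\cdot k\right)$ with $\gamma$ ranging over $\boldsymbol{G}\left(l\right)$ hits every residue class at $w_1$. But by strong approximation (which applies since $\boldsymbol{G}$ is simply connected and isotropic at a place of $W_0$, exactly the hypotheses of Definition~\ref{def:good pair}), $\boldsymbol{G}\left(l\right)\,G$ is dense in $\boldsymbol{G}\left(\A_l\right)$, and since $G\boldsymbol{K_l}$ is open we already know $\boldsymbol{G}\left(l\right)G\boldsymbol{K_l}=\boldsymbol{G}\left(\A_l\right)$ from the proof of Theorem~\ref{thm:choice of Gamma}. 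Applying the same openness argument with the smaller open subgroup $G\,\boldsymbol{K_l}^{(w_1)}$ gives $\boldsymbol{G}\left(l\right)\,G\,\boldsymbol{K_l}^{(w_1)}=\boldsymbol{G}\left(\A_l\right)$ as well. Intersecting with $\boldsymbol{G}\left(\O_{w_1}\right)$ at the $w_1$-coordinate and tracking the argument from the proof of Theorem~\ref{thm:choice of Gamma} — $g$ and $k$ commute with $\boldsymbol{G}\left(l\right)$-translation only in the relevant sense there — one reads off that $P_{w_1}(\Gamma)=\boldsymbol{G}\left(\O_{w_1}\right)$ and hence $p_{w_1}(\Gamma)=r\left(\boldsymbol{G}\left(\O_{w_1}\right)\right)=\boldsymbol{G}\left(\O_{w_1}/\pi_{w_1}\O_{w_1}\right)$. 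Since $\Gamma(w_1)=\ker p_{w_1}$ by definition \eqref{eq:Gamma(w) def}, the first isomorphism theorem yields $\Gamma/\Gamma(w_1)\cong\boldsymbol{G}\left(\O_{w_1}/\pi_{w_1}\O_{w_1}\right)$.

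The main obstacle is the second step: one must be careful that strong approximation is being invoked for the correct group (the simply connected $\boldsymbol{G}$, not an adjoint or isogeny quotient — this is precisely why the running hypothesis in Definition~\ref{def:good pair} insists $\boldsymbol{G}$ is simply connected) and at the correct place (the non-compact place in $W_0$), and that the open-subgroup manipulation genuinely produces surjectivity onto the full residue-field group rather than a proper subgroup. Everything else — the lifting of unipotents, the identification $\boldsymbol{G}\cong\boldsymbol{\tilde G}$ over $\O_{w_1}$ at the inert place, and the bookkeeping with $P_{w_1}$ — is routine and parallel to the already-proven Theorem~\ref{thm:choice of Gamma}. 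The $\Gamma'$ statement is obtained verbatim with $(l,w_1)$ replaced by $(k,v_1)$, using strong approximation for $\boldsymbol{G}$ over $k$.
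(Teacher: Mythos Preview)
Your approach is essentially identical to the paper's: define the smaller compact open $\boldsymbol{K_l}^{(w_1)}$ (the paper calls it $\boldsymbol{K_l'}$), apply strong approximation to get $\boldsymbol{G}(l)\,G\,\boldsymbol{K_l}^{(w_1)}=\boldsymbol{G}(\A_l)$, and read off surjectivity of $p_{w_1}$; your discussion of the surjectivity of the reduction map $r$ via unipotent lifts is exactly what the paper records in the paragraph preceding the lemma. One small slip: the assertion $P_{w_1}(\Gamma)=\boldsymbol{G}(\O_{w_1})$ is literally false (the left side is countable), and what the argument actually yields---and what you need---is $P_{w_1}(\Gamma)\cdot K_{w_1}'=\boldsymbol{G}(\O_{w_1})$, i.e.\ surjectivity onto $\boldsymbol{G}(\O_{w_1})/K_{w_1}'$.
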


\begin{proof}
Let $K_{w_{1}}^{\prime}=\ker\left(\boldsymbol{G}\left(\O_{w_{1}}\right)\to\boldsymbol{G}\left(\O_{w_{1}}/\pi_{w_{1}}\O_{w_{1}}\right)\right)$,
which is a finite index group of $K_{w_{1}}=\boldsymbol{G}\left(\O_{w_{1}}\right)$,
and therefore compact open. Let $\boldsymbol{K_{l}^{\prime}}=\prod_{w\in W_{0}}\left\{ \id\right\} \times K_{w_1}^\prime \times\prod_{w\notin W_{0}\cup\{w_1\}}K_{w}$.
Notice that $\Gamma\left(w_{1}\right)=P_{W_{0}}\left(\boldsymbol{G}\left(l\right)\cap G\boldsymbol{K_{l}^{\prime}}\right)$.

By the strong approximation theorem $\boldsymbol{G}\left(l\right)G\boldsymbol{K_{l}^{\prime}}=\boldsymbol{G}\left(\A_{l}\right)$.
Therefore, for every $k\in K_{w_{1}}$ there is $\gamma\in\boldsymbol{G}\left(l\right)$
and $g\in G$ such that $k\in\gamma g\boldsymbol{K_{l}^{\prime}}$.
This implies that $\gamma\in\boldsymbol{G}\left(l\right)\cap G\boldsymbol{K_{l}}$,
so $P_{W_{0}}\left(\gamma\right)\in\Gamma$. Finally, $p_{w_{1}}\left(P_{W_{0}}\left(\gamma\right)\right)=P_{w_{1}}\left(\gamma\right)=kK_{w_{1}}^{\prime}$,
so $p_{w_1}$ is onto as needed.

The claim for $p_{v_{1}}^{\prime}$ is similar, by choosing
\begin{align*}
K_{v_{1}}^{\prime} & =\ker\left(\boldsymbol{G}\left(\O_{v_{1}}\right)\to\boldsymbol{G}\left(\O_{v_{1}}/\pi_{v_{1}}\O_{v_{1}}\right)\right)\\
 & =K_{v_{1}}\cap K_{w_{1}}^{\prime}
\end{align*}
and $\boldsymbol{K_{k}^{\prime}}=\prod_{v\in V_{0}}\left\{ \id\right\} \times K_{v_{1}}^{\prime}\times\prod_{ v\in V_{0}\cup \{v_0\}}K_{v}$.
\end{proof}
\begin{lem}\label{lem:intersection of congreunce}
$\Gamma\left(w_{1}\right)\cap H=\Gamma'\left(v_{1}\right)$.
\end{lem}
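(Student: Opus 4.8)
The plan is to prove the identity $\Gamma(w_1) \cap H = \Gamma'(v_1)$ by chaining together the earlier results in exactly the same style as the proof of part (2) of Theorem~\ref{thm:choice of Gamma}, but now with the congruence condition at $w_1$ (resp.\ $v_1$) built in. Recall from that proof the three compatibility facts used there: $P_{W_0}(\boldsymbol{G}(l)) \cap H = P_{W_0}(\boldsymbol{G}(k)) \cap H$, the identity $\boldsymbol{G}(k) \cap G\boldsymbol{K_l} = \boldsymbol{G}(k) \cap H\boldsymbol{K_k}$, and $P_{W_0}(\boldsymbol{G}(\A_k)) = P_{V_0}(\boldsymbol{G}(\A_k))$. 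I would first record the analogue: $\Gamma(w_1) = P_{W_0}(\boldsymbol{G}(l) \cap G\boldsymbol{K_l'})$ (already observed in the previous lemma) and $\Gamma'(v_1) = P_{V_0}(\boldsymbol{G}(k) \cap H\boldsymbol{K_k'})$, where $\boldsymbol{K_l'}$ and $\boldsymbol{K_k'}$ differ from $\boldsymbol{K_l}$, $\boldsymbol{K_k}$ only in that the $w_1$-component is shrunk to the principal congruence subgroup $K_{w_1}'$ (resp.\ the $v_1$-component to $K_{v_1}'$).

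The heart of the argument is then the same four-step chain of equalities, with $\boldsymbol{K_l'}$, $\boldsymbol{K_k'}$ in place of $\boldsymbol{K_l}$, $\boldsymbol{K_k}$:
\begin{align*}
\Gamma(w_1) \cap H &= P_{W_0}\bigl(\boldsymbol{G}(l) \cap G\boldsymbol{K_l'}\bigr) \cap H \\
 &= P_{W_0}\bigl(\boldsymbol{G}(k) \cap G\boldsymbol{K_l'}\bigr) \cap H \\
 &= P_{W_0}\bigl(\boldsymbol{G}(k) \cap H\boldsymbol{K_k'}\bigr) \\
 &= P_{V_0}\bigl(\boldsymbol{G}(k) \cap H\boldsymbol{K_k'}\bigr) = \Gamma'(v_1).
\end{align*}
The first equality is the description of $\Gamma(w_1)$; the second uses $P_{W_0}(\boldsymbol{G}(l)) \cap H = P_{W_0}(\boldsymbol{G}(k)) \cap H$ just as before, noting that intersecting with $H$ does not interact with the $\boldsymbol{K}$'s; the last is the base-change of projections.

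The step I expect to be the genuine obstacle is the third equality, $\boldsymbol{G}(k) \cap G\boldsymbol{K_l'} = \boldsymbol{G}(k) \cap H\boldsymbol{K_k'}$. In the unprimed case this rested on $\boldsymbol{K_k} = \boldsymbol{K_l} \cap \boldsymbol{G}(\A_k)$, so I need the primed analogue $\boldsymbol{K_k'} = \boldsymbol{K_l'} \cap \boldsymbol{G}(\A_k)$. This is where the inertness of $v_1$ in $l/k$ and the choice $K_{v_1}' = K_{v_1} \cap K_{w_1}'$ (made in the previous lemma's proof) are used: at the single place $w_1$ above $v_1$ the diagonal embedding $\boldsymbol{G}(k_{v_1}) \hookrightarrow \boldsymbol{G}(l_{w_1})$ sends $\boldsymbol{G}(\O_{v_1})$ into $\boldsymbol{G}(\O_{w_1})$ with $\boldsymbol{G}(k_{v_1}) \cap \boldsymbol{G}(\O_{w_1}) = \boldsymbol{G}(\O_{v_1})$, and the reduction maps $(\bmod\,w_1)$ and $(\bmod\,v_1)$ are compatible because $\O_{v_1}/\pi_{v_1}\O_{v_1} \hookrightarrow \O_{w_1}/\pi_{w_1}\O_{w_1}$ (here one should check that $\pi_{v_1}$ can be taken as a uniformizer of $w_1$, which is exactly the meaning of inertness). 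Hence $\boldsymbol{G}(k_{v_1}) \cap K_{w_1}' = K_{v_1}'$, and combining with $\boldsymbol{G}(k_v) \cap K_{w_i} = K_v$ at all other places $v \notin V_0$ gives $\boldsymbol{K_k'} = \boldsymbol{K_l'} \cap \boldsymbol{G}(\A_k)$. With that in hand the third equality follows exactly as in Theorem~\ref{thm:choice of Gamma}, and the chain closes. I would keep the write-up short, pointing to the proof of Theorem~\ref{thm:choice of Gamma}(2) for the two outer equalities and spelling out only the compatibility $\boldsymbol{K_k'} = \boldsymbol{K_l'} \cap \boldsymbol{G}(\A_k)$ in detail.
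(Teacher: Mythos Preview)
Your proposal is correct and follows essentially the same chain of equalities as the paper's proof, which likewise writes $\Gamma(w_1)=P_{W_0}(\boldsymbol{G}(l)\cap G\boldsymbol{K_l'})$ and then runs the four-step computation exactly as you do. If anything, you give more detail than the paper on the key compatibility $\boldsymbol{K_k'}=\boldsymbol{K_l'}\cap\boldsymbol{G}(\A_k)$, which the paper leaves implicit via the earlier choice $K_{v_1}'=K_{v_1}\cap K_{w_1}'$.
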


\begin{proof}
Continuing the notations of the last proof $\Gamma\left(w_{1}\right)=P_{W_{0}}\left(\boldsymbol{G}\left(l\right)\cap G\boldsymbol{K_{l}^{\prime}}\right)$,
and
\begin{align*}
\Gamma\left(w_{1}\right)\cap H & =P_{W_{0}}\left(\boldsymbol{G}\left(l\right)\cap G\boldsymbol{K_{l}^{\prime}}\right)\cap H\\
 & =P_{W_{0}}\left(\boldsymbol{G}\left(k\right)\cap G\boldsymbol{K_{l}^{\prime}}\right)\cap H\\
 & =P_{W_{0}}\left(\boldsymbol{G}\left(k\right)\cap H\boldsymbol{K_{k}^{\prime}}\right)\cap H\\
 & =P_{V_{0}}\left(\boldsymbol{G}\left(k\right)\cap H\boldsymbol{K_{k}^{\prime}}\right)\\
 & =\Gamma^{\prime}\left(v_{1}\right).
\end{align*}
\end{proof}
We can finally state the precise form of Theorem~\ref{thm:Abstract theorem}.
\begin{thm}
\label{thm:Good Pair Theorem}Let $\left(G,H\right)$ be a good pair
as above, $\Gamma$ as in Equation~\eqref{eq:Gamma def}, and $\Gamma\left(w_{1}\right)\vartriangleleft\Gamma$
as in Equation~\eqref{eq:Gamma(w) def}. Then $\Gamma\cap H=\Gamma'$
is a cocompact lattice in $H$, and for $\Gamma^{\prime}\left(v_{1}\right)=\Gamma\left(w_{1}\right)\cap H$
it holds that
\[
\left[\Gamma':\Gamma^{\prime}\left(v_{1}\right)\right]=O\left(\left|\Gamma:\Gamma\left(w_{1}\right)\right|^{1/\left[l_{0}:k_{0}\right]}\right).
\]
Therefore, if $W_{0}$ contains only non-Archimedean places and $K$
is a maximal compact open subgroup of $G$ and $K_{H}=H\cap K$ is
the corresponding compact open subgroup of $H$,
\[
\left|\Gamma^{\prime}\left(v_{1}\right)\backslash H/K_{H}\right|=O\left(\left|\Gamma\left(w_{1}\right)\backslash G/K\right|^{1/\left[l_{0}:k_{0}\right]}\right).
\]
\end{thm}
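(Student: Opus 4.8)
The plan is to assemble the statement from the pieces already established in this section; the theorem is essentially a repackaging of the preceding lemmas, so the work is bookkeeping rather than new ideas. First I would invoke Theorem~\ref{thm:choice of Gamma}: part~(1) gives that $\Gamma\le G$ and $\Gamma'\le H$ are cocompact lattices, and part~(2) gives $\Gamma\cap H=\Gamma'$. At the congruence level, Lemma~\ref{lem:intersection of congreunce} identifies $\Gamma(w_1)\cap H$ with the normal subgroup $\Gamma'(v_1)=\ker p_{v_1}^{\prime}$ of Equation~\eqref{eq:Gamma(w) def}; since $\Gamma'\le H$ this group is also $\Gamma'\cap\Gamma(w_1)$, which settles both identities in the statement.

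Next I would prove the index estimate. The lemma asserting that $p_{w_1}$ and $p_{v_1}^{\prime}$ are onto gives $\Gamma/\Gamma(w_1)\cong\boldsymbol{G}(\O_{w_1}/\pi_{w_1}\O_{w_1})$ and $\Gamma'/\Gamma'(v_1)\cong\boldsymbol{G}(\O_{v_1}/\pi_{v_1}\O_{v_1})$, hence $[\Gamma:\Gamma(w_1)]=|\boldsymbol{G}(\O_{w_1}/\pi_{w_1}\O_{w_1})|$ and $[\Gamma':\Gamma'(v_1)]=|\boldsymbol{G}(\O_{v_1}/\pi_{v_1}\O_{v_1})|$. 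Because $v_1$ was chosen inert in $l/k$ with $w_1$ the unique place above it, the residue field at $w_1$ has degree $[l:k]=[l_0:k_0]$ over that at $v_1$, so $|\O_{w_1}/\pi_{w_1}\O_{w_1}|=|\O_{v_1}/\pi_{v_1}\O_{v_1}|^{[l_0:k_0]}$; combined with Equation~\eqref{eq:Group Sizes} (the Chevalley point count, which applies since $\boldsymbol{G}\cong\tilde{\boldsymbol{G}}$ in our setting) this gives
\[
[\Gamma':\Gamma'(v_1)]=O\!\left([\Gamma:\Gamma(w_1)]^{1/[l_0:k_0]}\right).
\]

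Finally, to turn these subgroup indices into cardinalities of double coset spaces I would argue by covolume. Normalise the Haar measure on $G$ so that $\mathrm{vol}(K)=1$; choosing representatives $x$ for $\Gamma(w_1)\backslash G/K$ decomposes $\Gamma(w_1)\backslash G$ into pieces of volume $1/|x^{-1}\Gamma(w_1)x\cap K|$, and each of these denominators is a finite group contained in $x^{-1}\Gamma x\cap K$, so as $xK$ ranges over the fixed finite set $\Gamma\backslash G/K$ (finite since $K$ is open and $\Gamma$ is cocompact) their orders are bounded independently of the varying place $w_1$. Therefore $|\Gamma(w_1)\backslash G/K|=\Theta\!\left(\mathrm{vol}(\Gamma(w_1)\backslash G)\right)=\Theta\!\left([\Gamma:\Gamma(w_1)]\right)$, with implied constants depending only on $(G,K,\Gamma)$; the same computation for $H$, $K_H$ and $\Gamma'$ yields $|\Gamma'(v_1)\backslash H/K_H|=\Theta\!\left([\Gamma':\Gamma'(v_1)]\right)$. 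Combining with the displayed bound gives $|\Gamma'(v_1)\backslash H/K_H|=O\!\left(|\Gamma(w_1)\backslash G/K|^{1/[l_0:k_0]}\right)$, as claimed.

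The one genuinely delicate step has already been dealt with inside Theorem~\ref{thm:choice of Gamma}, namely that $\Gamma\cap H$ is again a lattice; this is the place where strong approximation enters, through the identity $\boldsymbol{G}(l)G\boldsymbol{K_l}=\boldsymbol{G}(\A_l)$ together with $\boldsymbol{G}(\A_k)\cap\boldsymbol{G}(l)=\boldsymbol{G}(k)$ and the commutation of $G$ with $\boldsymbol{K_l}$. The only care left in the present argument is the uniformity of the constants in the $\Theta$-estimates for the double coset counts, which is why I prefer to phrase that step via covolumes rather than by counting $\Gamma(w_1)$-orbits on $G/K$ directly.
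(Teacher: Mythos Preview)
Your proposal is correct and follows essentially the same route as the paper: the first three parts are assembled from Theorem~\ref{thm:choice of Gamma}, Lemma~\ref{lem:intersection of congreunce}, the surjectivity lemma for $p_{w_1},p'_{v_1}$, and Equation~\eqref{eq:Group Sizes}, exactly as you do. For the final double-coset estimate the paper writes the bare inequality $|\Gamma\backslash G/K|\,[\Gamma:\Gamma(w_1)]\,|\Gamma\cap K|^{-1}\le |\Gamma(w_1)\backslash G/K|\le |\Gamma\backslash G/K|\,[\Gamma:\Gamma(w_1)]$ and notes that $\Gamma\backslash G/K$ and $\Gamma\cap K$ are finite; your covolume phrasing is the same argument in different clothing, with the uniform bound on the stabilisers $x^{-1}\Gamma(w_1)x\cap K\subset x^{-1}\Gamma x\cap K$ playing the role of the factor $|\Gamma\cap K|^{-1}$.
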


\begin{proof}
Almost all of the theorem was proven above, except for the last part.
It holds that
\[
\left|\Gamma\backslash G/K\right|\left[\Gamma:\Gamma\left(w_{1}\right)\right]\left|\Gamma\cap K\right|^{-1}\le\left|\Gamma\left(w_{1}\right)\backslash G/K\right|\le\left|\Gamma\backslash G/K\right|\left[\Gamma:\Gamma\left(w_{1}\right)\right].
\]
Since $\Gamma\backslash G/K$ and $\Gamma\cap K$ are finite (this
is where we use the fact that $W_{0}$ has no Archimedean places),
\[
\left|\Gamma\left(w_{1}\right)\backslash G/K\right|=\Theta\left(\left[\Gamma:\Gamma\left(w_{1}\right)\right]\right)
\]
The arguments for $\Gamma'$ are the same.

The last part is therefore a consequence of the first part of the theorem.
\end{proof}

\section{\label{sec:Division Algberas}Applications of the Closed Orbit Method}

In this section we explain how the arguments of the last section apply
to groups defined by quaternion algebras and division algebras, which
will then give us interesting lattices in $\SL_{n}$.

\subsection{Closed Orbits in Division Algebras}
Let $A$ be a division algebra over a global field $k$. For simplicity, we assume that $k$ is a function field and the degree $n$ of $A$ is a prime.

Since we assume that $n$ is prime, over any completion $k_{v}$,
$A\left(k_{v}\right)$ is either a division algebra, in which case
we say that $A$ ramifies at $v$ (alternatively, we say that $A_{k_v}$ ramifies), or isomorphic to $M_{n}\left(k_{v}\right)$,
in which case we say that $A$ splits or is unramified at $v$ (alternatively, we say that $A_{k_v}$ splits). The
Albert--Brauer--Hasse--Noether Theorem implies that given any subset of places $v_{1},...,v_{m}$, we may choose $A$ that will ramify only at a subset of $v_{1},...,v_{m}$, and may choose $A\left(k_{v_{1}}\right),...,A\left(k_{v_{m-1}}\right)$
as we want up to isomorphism ($A\left(k_{v_{m}}\right)$ will be determined by the others).

Now let $l$ be a separable field extension of $k$. Given a place $v$ of $k$ and a place $w$ of $l$ over it, it holds that $A\left(l_{w}\right)$
will be split if and only if it is possible to embed $l_{w}$ in $A\left(k_{v}\right)$.
We will restrict ourselves to quadratic extensions, and recall that
$n$ is prime. In this case:
\begin{enumerate}
\item If $n\ne2$ then $A\left(l_{w}\right)$ splits if and only if $A\left(k_{v}\right)$
splits.
\item If $n=2$:
\begin{enumerate}
\item If $A(k_v)$ splits, then $A(l_w)$ splits.
\item If $A(k_v)$ ramifies, then it holds that $A(l_w)$ ramifies if and only if $v$ splits in the extension from $l$ to $k$ (i.e, $w$ is not the only place over $v$).
\end{enumerate}
\end{enumerate}
Now consider the semisimple algebraic group $\boldsymbol{G}$ defined
over $k$ by
\[
\boldsymbol{G}\left(k\right)=\left\{ \alpha\in A\left(k\right):N\left(\alpha\right)=1\right\} .
\]
The group $\boldsymbol{G}$ is connected, simply connected and absolutely
almost simple. The split form of $\boldsymbol{G}$ is $\SL_{n}$.
The group $\boldsymbol{G}$ is anisotropic over a $k_{v}$ (resp.
$l_{w}$) if and only if the algebra $A$ ramifies at $k_{v}$ (resp.
$l_{w}$). The group $\boldsymbol{G}$ is anisotropic over $k$ (resp.
$l$) if and only if $A$ has a ramified place $k_{v}$ (resp. $l_{w}$).

We conclude that Definition~\ref{def:good pair} will hold if \textbf{$\boldsymbol{G}$}
has a ramified place over $l$, and if $V_{0}$ contains a place where
$A$ splits. Let us focus on the case when $V_{0}=\left\{ v_{0}\right\} $
is a split place with a single place $w_{0}$ above $v_{0}$ (so $W_{0}=\left\{ w_{0}\right\} $).
Then $G=\boldsymbol{G}\left(l_{w_{0}}\right)\cong\SL_{n}\left(l_{w_{0}}\right)$
and $H=\boldsymbol{G}\left(k_{v_{0}}\right)\cong\SL_{n}\left(k_{v_{0}}\right)$.
We with to show that Theorem~\ref{thm:Good Pair Theorem} applies
to those cases.
\begin{prop}
\label{prop:good pairs}The pairs
\begin{align*}
 & \left(G,H\right)=\left(\SL_{n}\left(\F_{q}\left(\left(t\right)\right)\right),\SL_{n}\left(\F_{q}\left(\left(t^{2}\right)\right)\right)\right)\\
\text{or\,\,\,} & \left(G,H\right)=\left(\SL_{n}\left(\F_{q^{2}}\left(\left(t\right)\right)\right),\SL_{n}\left(\F_{q}\left(\left(t\right)\right)\right)\right)
\end{align*}
are good, i.e., satisfy Definition~\ref{def:good pair}.
\end{prop}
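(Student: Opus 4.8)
The plan is to exhibit, for each of the two pairs, the global data $(k,l,\boldsymbol{G},V_{0})$ required by Definition~\ref{def:good pair} and to verify its three clauses; the only substantial point will be arranging that $\boldsymbol{G}$ remains anisotropic over $l$. I would first fix the fields. For $(\SL_{n}(\F_{q}((t))),\SL_{n}(\F_{q}((t^{2}))))$, take $k=\F_{q}(s)$ and $l=\F_{q}(t)$ with $s=t^{2}$, a separable quadratic extension when $\mathrm{char}\,\F_{q}\neq 2$; the place $v_{0}$ of $k$ attached to the prime $s$ is totally ramified in $l/k$, with a single place $w_{0}$ above it, so that $k_{v_{0}}=\F_{q}((t^{2}))$ and $l_{w_{0}}=\F_{q}((t))$. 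For $(\SL_{n}(\F_{q^{2}}((t))),\SL_{n}(\F_{q}((t))))$, take $k=\F_{q}(t)$ and $l=\F_{q^{2}}(t)$, the degree-$2$ constant-field extension (separable in every characteristic); the degree-one place $v_{0}=(t)$ is inert, with a single place $w_{0}$ above it whose residue field is $\F_{q^{2}}$, so $k_{v_{0}}=\F_{q}((t))$ and $l_{w_{0}}=\F_{q^{2}}((t))$. In both cases set $V_{0}=\{v_{0}\}$ and $W_{0}=\{w_{0}\}$, so that $G=\boldsymbol{G}(l_{w_{0}})$ and $H=\boldsymbol{G}(k_{v_{0}})$ for any choice of $\boldsymbol{G}$.

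Next I would choose $\boldsymbol{G}=\SL_{1}(A)$ for a central division algebra $A$ of prime degree $n$ over $k$; as recalled in this subsection, $\boldsymbol{G}$ is then connected, simply connected and absolutely almost simple, with split form $\SL_{n}$. Using the Albert--Brauer--Hasse--Noether theorem I pick $A$ ramified at exactly two places $v_{1},v_{2}$ of $k$, both different from $v_{0}$, and --- when $n=2$ --- chosen so that at least one of them splits in $l/k$ (for the first pair one may take $v_{1}=(s-c)$ with $c$ a nonzero square in $\F_{q}$; for the second pair any place of even degree splits). By the local dichotomy recalled above, $A$ is then still ramified at a place of $l$ lying over such a $v_{i}$, so $A\otimes_{k}l$ is a division algebra and $\boldsymbol{G}$ is anisotropic over $l$ (hence over $k$). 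On the other hand $A$ is unramified at $v_{0}$ and at $w_{0}$, so $\boldsymbol{G}(k_{v_{0}})\cong\SL_{n}(k_{v_{0}})$ and $\boldsymbol{G}(l_{w_{0}})\cong\SL_{n}(l_{w_{0}})$, which are precisely the groups $H$ and $G$ in the statement.

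With this in place the three clauses of Definition~\ref{def:good pair} become routine: clause (1) holds by the choice of $k,l$; for clause (2), $\boldsymbol{G}$ is connected, simply connected and almost simple over $k$ and over $l$ because $A$ and $A\otimes_{k}l$ are division algebras of prime degree, and anisotropy over $l$ was just arranged; for clause (3), $\boldsymbol{G}$ is isotropic at $v_{0}$ since $\boldsymbol{G}(k_{v_{0}})\cong\SL_{n}(k_{v_{0}})$ is $k_{v_{0}}$-split, and the Archimedean condition is vacuous because $k$ and $l$ are function fields. Hence each $(G,H)=(\boldsymbol{G}(l_{w_{0}}),\boldsymbol{G}(k_{v_{0}}))$ is a good pair, which is exactly the assertion.

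The step I expect to be the crux is the choice of the ramified places of $A$ in the case $n=2$: there, a ramified place of $k$ that is inert in $l/k$ becomes unramified over $l$, so the set of ramified places of $A$ must include a place that splits in $l/k$, for otherwise $A\otimes_{k}l$ would split and $\boldsymbol{G}$ would fail to be anisotropic over $l$; all remaining steps only use standard facts about $\SL_{1}(A)$ and the splitting of places in $l/k$. A minor caveat is that for the first pair the extension $\F_{q}(t)/\F_{q}(t^{2})$ is purely inseparable in characteristic $2$, so the argument as stated applies there only for odd $q$, consistent with the hypotheses under which that pair enters the applications.
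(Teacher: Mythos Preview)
Your argument is essentially the paper's own: the same global fields $k,l$, the same places $V_0=\{v_0\}$, the same $\boldsymbol{G}=\SL_1(A)$ for a central division algebra of prime degree $n$ over $k$ split at $v_0$, and the same use of Albert--Brauer--Hasse--Noether to force $A\otimes_k l$ to remain a division algebra. Your handling of the delicate $n=2$ case---requiring a ramified place of $A$ to split in $l/k$ so that $A$ stays ramified above it---is exactly what the paper does, and your concrete choices ($v_1=(s-c)$ with $c\in\F_q^{\times 2}$ for the first pair, a place of even degree for the second) are correct.

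The one point where the paper goes further is characteristic $2$ for the first pair. You correctly flag that $\F_q(t)/\F_q(t^2)$ is inseparable there, but the paper does not drop this case: it replaces the global extension by the separable quadratic $\F_q(u)/\F_q(s)$ with $u^2+su+1=0$, which is still ramified at $v_0=(s)$, so that the completion $l_{w_0}$ is again a totally ramified quadratic extension of $\F_q((s))$ and the pair $(G,H)$ has the desired form. Your remark that the applications only require odd $q$ is not quite accurate: the non-explicit Theorem~\ref{thm:vertex expansion intro} and Theorem~\ref{thm:Complexes} are stated for every prime power $q$ and invoke this proposition, so the characteristic-$2$ workaround is genuinely needed for the paper's claims.
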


\begin{proof}
We start with the second case, as it is simpler. We first let $k=\F_{q}\left(t\right)$
and choose the place $v_{0}=t$, so $k_{0}=k_{v_{0}}=\F_{q}\left(\left(t\right)\right)$.
We let $l=\F_{q^{2}}\left(t\right)$ be the quadratic constant field extension of $k$. Then $v_{0}$ is inert in the extension, i.e., have a unique place $w_{0}=t$ over it, with $l_{w_{0}}=l_{0}=\F_{q^{2}}\left(\left(t\right)\right)$.
We then let $A$ be a division algebra of degree $n$ which splits
at $k_{0}$ but ramifies over $k$ and remains ramified over $l$.
It is easy to see that it can happen -- for $n>2$ we may choose
any division algebra over $k$ (it will remain a division algebra
over $l$), and for $n=2$ we need to make sure that $A$ will also
ramify at some place $v$ corresponding to a polynomial of even degree, since $v$ will then split in the extension, and $A$ will ramify for every place $w$ above $v$, so will be ramified over $l$. 
Therefore, all the conditions of Definition~\ref{def:good pair} hold for $G=\boldsymbol{G}\left(l_{0}\right)\cong\SL_{n}\left(\F_{q^{2}}\left(\left(t\right)\right)\right)$,
$H=\boldsymbol{G}\left(k_{0}\right)\cong\SL_{n}\left(\F_{q}\left(\left(t\right)\right)\right)$.

For the first case, we let $k=\F_{q}\left(s\right)$, and let $l$
be a separable quadratic extension of $k$ which ramifies at the place
$v_{0}=s$. For $\mchar q\ne2$ we may take $l=\F_{q}\left(t\right)$
for $t^{2}=s$. For $\mchar q=2$ this extension no longer works
since it is not separable. Instead, we may look at the extension $\F_{q}\left(u\right)$
which we get by adding a root to $u^{2}+su+1=0$. Once again, we choose
a division algebra $A$ such that it splits at $k_{0}$ and ramifies
over $k$ and over $l$. It is simple to see that such $A$ exists.
In any case, if $w_{0}$ is the place of $l$ over $v_{0}$, then
locally we have $l_{0}=\F_{q}\left(\left(t\right)\right)$ for some
$t$ such that $t^{2}=s$. Therefore, $G=\boldsymbol{G}\left(l_{0}\right)\cong\SL_{n}\left(\F_{q}\left(\left(t\right)\right)\right)$
and $H=\boldsymbol{G}\left(k_{0}\right)\cong\SL_{n}\left(\F_{q}\left(\left(t^{2}\right)\right)\right)$,
as needed.
\end{proof}

\subsubsection{The Explicit Construction}\label{subsec:explanation of explicit}

Let us explain how the abstract arguments are related to the explicit
construction. For $\mchar q\ne2$, let $k=\F_{q}\left(s\right)$
and let $l=\F_{q}\left(t\right)$ for $t^{2}=s$. Let $v_{0}=s$,
$w_{0}=t$, so $k_{0}=\F_{q}\left(\left(s\right)\right)$ and $l_{0}=\F_{q}\left(\left(t\right)\right)$.
We choose the algebra $A_{2}$ with the basis $1,i,j,ij$, and the
relations $i^{2}=\epsilon,j^{2}=s-1,ij=-ji$, for $\epsilon\in\F_{q}$
non-square. The algebra $A_{2}$ ramifies at the places $1/s$ and $s-1$. Over $t$ the algebra $A_{2}$ ramifies
at $t-1$ and $t+1$, and is isomorphic to the algebra $A_{1}$ defined
by the relations $i^{2}=\epsilon,j^{2}=\frac{t-1}{t+1},ij=-ji$. After
a change of variables $u=\frac{2t}{2+1}$, the algebra $A_{1}$ is
isomorphic to the algebra $A$ over $\F_{q}\left(u\right)$, defined
by $i^{2}=\epsilon$, $j^{2}=u-1$, $ij=-ji$. Moreover, the place
$t$ corresponds to the place $u$ under this isomorphism. Let $\boldsymbol{G}=\PGL_{1}\left(A_{2}\right)$.

Notice that $\boldsymbol{G}$ is not simply connected, but most of
the general argument of Section~\ref{sec:The-Method} applies to
it, with some modifications we discuss below.

Since $A_{2}$ over $\F_{q}\left(t\right)$ and $A$ over $\F_{q}\left(u\right)$
are isomorphic, the construction of Morgenstern in \cite{morgenstern1994existence}
allows us to find a compact open subgroup $\boldsymbol{K_{l}}\subset\boldsymbol{G}\left(\A_{l}^{\{t\}}\right)$, such that $\Gamma=P_{\left\{ t\right\} }\left(\boldsymbol{G}\left(l\right)\cap G\boldsymbol{K_{l}}\right)\le G=\boldsymbol{G}\left(l_{0}\right)$
is a lattice in $G$ which acts simply transitively on the Bruhat-Tits
building of $G$, and all of its congruence subgroups define Ramanujan
graphs. 

The arguments of Section~\ref{sec:The-Method} (when extended
to the non-simply connected case) say that $\Gamma'=\Gamma\cap H\le H=\boldsymbol{G}\left(k_{0}\right)$
is a lattice in $H$. As a matter of fact, a direct calculation shows that $\Gamma'$ is generated by $\gamma_{1}^{2},...,\gamma_{q+1}^{2}$, where $\gamma_{1},...,\gamma_{q+1}$
are the generators of $\Gamma$, and more importantly, $\Gamma'$
acts simply transitively on the Bruhat-Tits building of $H$. This
implies that the map $F_{\Gamma}\colon\Gamma'\backslash B_{H}\to\Gamma\backslash B_{G}$
is injective, since both sets are of size $1$. Finally, we take congruence
covers $\Gamma\left(w_{1}\right)$ and $\Gamma'\left(v_{1}\right)$
as above, to get a map from a small graph to a large graph.

The fact that $\boldsymbol{G}$ is not simply connected mainly implies
that the exact behavior of $\Gamma/\Gamma\left(w_{1}\right)$ and
$\Gamma'/\Gamma\left(v_{1}\right)$ is not known from the general
arguments. To understand it we need some more work that is done by
Morgenstern in \cite{morgenstern1994existence} and the calculations
of Section~\ref{sec:Proof_of_Explicit_Theorem}.

\subsection{\label{subsec:Bruhat-Tits Buildings}The Bruhat-Tits Building}

In this subsection, let $l_{0}=\F_{q}\left(\left(t\right)\right)$
and we quickly recall the Bruhat-Tits building of $\SL_{n}\left(l_{0}\right)$
and its Ramanujan quotients. When $n=2$, the Bruhat-Tits building
is a tree, and a good reference is \cite{lubotzky1994discrete}. For
$n>2$, see \cite{lubotzky2014ramanujan} and the references therein.

It is simpler to work with $G=\PGL_{n}\left(l_{0}\right)$, as the
buildings are the same, and $\SL_{n}\left(l_{0}\right)$ acts on the
building by its image $\PSL_{n}\left(l_{0}\right)\subset\PGL_{n}\left(l_{0}\right)$ which is of index $n$.
Let $\O=\F_{q}\left[\left[t\right]\right]$ be the ring of integers
of $l_{0}$. Let $K=\PGL_{n}\left(\O\right)$, which is a maximal
compact open subgroup of $G$.

The Bruhat-Tits building $B_{G}$ of $G$ is a clique complex whose
vertices can be identified with $G/K$ (in general, we identify a
building with its vertices). The set $G/K$ can also be described
as all the $\O$ submodules of $l_{0}^{n}$, up to homothety (multiplication by a scalar from $l_{0}$). There is an edge between two modules $\left[M\right]\ne\left[M'\right]$
if they have representatives $M,M'$ such that $tM\subsetneq M'\subsetneq M$.
There is a bijection between equivalence classes of modules $\left[M\right]$
and matrices $A_{M}\in M_{n}\left(\F_{q}\left[t\right]\right)$, of
the form
\[
A_{M}=\left(\begin{array}{cccc}
t^{m_{1}} & f_{1,2}\left(t\right) &  & f_{1,n}\left(t\right)\\
0 & t^{m_{2}} &  & f_{2,n}\left(t\right)\\
\\
0 &  & 0 & t^{m_{n}}
\end{array}\right),
\]
such that:
\begin{enumerate}
\item $m_{1}\ge0,...,m_{n}\ge0$, for $1\le i<j\le n$.
\item $f_{i,j}\in\F_{q}\left[t\right]$ satisfies $\deg f_{i,j}<m_{i}$
for $i<j$.
\item $\gcd\left(t^{m_{1}},...,t^{m_{n}},f_{1,2}\left(t\right),...f_{n-1,n}\left(t\right)\right)=1$.
\end{enumerate}
The bijection is given by sending a matrix $A$ to the module generated by its columns. We identify a module with the corresponding matrix
by this bijection.

Each module $M$ has a color $c\left(M\right)\in\left[n\right]=\left\{ 0,...,n-1\right\} $.
It is uniquely determined by $\det\left(A_{M}\right)=t^{c\left(M\right)+nz}$,
for some $z\in\N$. For every color $i$, the subgroup $\PSL_{n}\left(l_{0}\right)\le G$
acts transitively on vertices of color $i$.

Let $M_{0}$ be the standard module corresponding to the identity
matrix $I$, or the identity coset in $G/K$. Its neighbors are the
modules which corresponding to the set $N=\left\{ A_{M_{1}},...,A_{M_{t}}\right\} $
of matrices $A_{M_{i}}$ as above, which further satisfy:
\begin{enumerate}
\item $0\le m_{1},...,m_{n}\le1$.
\item If $m_{j}=1$ then $f_{i,j}=0$ for $i<j$.
\item $A\ne I$.
\end{enumerate}
Finally, for every module $M$, its neighbors correspond to matrices
of the form $\left\{ A_{M}A:A\in N\right\} $, up to dividing by a
power of $t$ which is the gcd of all the elements.

Given a lattice $\Gamma\le\PGL_{n}\left(l_{0}\right)$, we may look
at the quotient space $\Gamma\backslash B_{G}$. Assuming that $\Gamma$
does not intersect a big enough neighborhood of the identity (and,
in particular, is torsion-free), $\Gamma\backslash B_{G}$ is a simplicial complex. In the case $n=2$ this is a $\left(q+1\right)$-regular
graph. We again refer to \cite{lubotzky2014ramanujan} for a discussion
of such complexes. If we have a lattice $\Gamma'\le\SL_{n}\left(l_{0}\right)$
we may project it to $\Gamma\le\PGL_{n}\left(l_{0}\right)$, and
then, assuming that $\Gamma'$ is torsion free, $\left|\Gamma'\backslash\SL_{n}\left(l_{0}\right)/\SL_{n}\left(\O\right)\right|=\frac{1}{n}\left|\Gamma\backslash B_{G}\right|$,
since $\SL_{n}\left(l_{0}\right)$ preserves the color of the vertices
of $B\left(G\right)$.

For the lattices $\Gamma$ constructed from division algebras in this
work, for every compact subset $S\subset G$, it holds that as $w_{1}$
changes, eventually $\Gamma\left(w_{1}\right)\cap S\in\left\{ e\right\} $.
This implies that $\Gamma\left(w_{1}\right)\backslash B_{G}$ will
indeed eventually be a simplicial complex. A far deeper fact is that
over function fields, $\Gamma\left(w_{1}\right)\backslash B_{G}$
is a \emph{Ramanujan complex} -- see \cite{lubotzky2005ramanujan,first2016ramanujan,kamber2016lpcomplex}
for a general discussion of this concept, and specifically \cite[Section 7]{first2016ramanujan}
for a proof (here we implicitly use the assumption that $n$ is prime).

\subsection{\label{subsec:Vertex-Expansion}Vertex Expansion}

Consider $l_{0}=\F_{q}\left(\left(t\right)\right)$ and its subfield
$k_{0}=\F_{q}\left(\left(t^{2}\right)\right)$. Let $\O_{k_{0}}=\F_{q}\left[\left[t^{2}\right]\right]$,
$\O_{l_{0}}=\F_{q}\left[\left[t\right]\right]$ be the corresponding
rings of integers.

We let $G=\PGL_{n}\left(l_{0}\right)$, $H=\PGL_{n}\left(k_{0}\right)$,
and $K=\PGL_{n}\left(\O_{l_{0}}\right)$, $K_{H}=H\cap K=\PGL_{n}\left(\O_{k_{0}}\right)$
the maximal compact open subgroups. We have an action of $H$ on Bruhat-Tits
building $B_{G}$ of $G$, and since the stabilizer of the standard
module is $K_{H}$, we have a map (on vertices) $F\colon B_{H}\to B_{G}$.
See Figure~\ref{fig:ramified extension tree} for a special case
of this embedding for $n=2$ and $q=2$.

We consider the set $N$ defining the neighbors in $B_{G}$ and the
set $N_{H}$ defining the neighbors in $B_{H}$, as in Subsection~\ref{subsec:Bruhat-Tits Buildings}.
There is a bijection $T\colon N\to N_{H}$, with $A\in N$ corresponding
to $T\left(A\right)\in N_{H}$ where $t$ is replaced by $s=t^{2}$.
Let $A\in N$ with diagonal $\left(t^{m_{1}},...,t^{m_{n}}\right)$,
then we get $T\left(A\right)$ by simply replacing the diagonal with
$\left(t^{2m_{1}},...,t^{2m_{n}}\right)$. Moreover, if $m_{j}=1$
then the $j$-th column is 0 outside the diagonal, and this implies
that $T\left(A\right)=AD\left(A\right)$, where $D\left(A\right)=\operatorname{diag}\left(t^{m_{1}},...,t^{m_{n}}\right)$
is the diagonal matrix with the same diagonal as $A$. Notice that
$D\left(A\right)\in N$. Therefore, if $A_{M}\in B_{H}$, then $F\left(A_{M}T\left(A\right)\right)=F\left(A_{M}\right)AD\left(A\right)$.
This discussion may be concluded as follows:
\begin{lem}
\label{lem:no unique neighbors}Let $M\in F\left(B_{H}\right)\subset B_{G}$,
and let $M'=MA$, $A\in N$ be a neighbor of $M$ in $B_{G}$. Then
there is $M''=M'D\left(A\right)=MT\left(A\right)\in F\left(B_{H}\right)$,
another neighbor of $M'$ from $F\left(B_{H}\right)$.

Therefore, F$\left(B_{H}\right)\subset B_{G}$ has no unique neighbors
(i.e., neighbors that are connected to it by a single edge).
\end{lem}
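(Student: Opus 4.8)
The plan is to read the statement straight off from the matrix identity $T(A)=AD(A)$ recorded in the paragraph preceding the lemma, using the concrete description of neighbours in the Bruhat--Tits building via the sets $N$ and $N_H$. Fix $M=F(A_M)\in F(B_H)$, write an arbitrary neighbour of $M$ in $B_G$ as $M'=MA$ with $A\in N$, and set $M'':=MT(A)$. I will check three things: that $M''\in F(B_H)$, that $M''$ is a neighbour of $M'$ in $B_G$, and that $M''\neq M$. Together these force $M'$ to be adjacent to the two distinct vertices $M,M''$ of $F(B_H)$; since every vertex adjacent to $F(B_H)$ arises as such an $M'$ (for some $M\in F(B_H)$ and some $A\in N$), it follows that $F(B_H)$ has no unique neighbours.

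For the first point I use that $T$ carries $N$ bijectively to $N_H$: since $T(A)\in N_H$, the matrix $A_M T(A)$ represents a vertex of $B_H$ adjacent to $A_M$, and applying the $H$-orbit map $F$ to it gives precisely $M T(A)=M''$, so $M''\in F(B_H)$; moreover $M''\neq M$ because $A_M T(A)$ is a genuine (loop-free) neighbour of $A_M$ inside $B_H$ and the orbit map $H/K_H\to G/K$ is injective (this is exactly the statement that $F$ is an embedding, valid because $K_H=H\cap K$). For the second point the identity $T(A)=AD(A)$ enters: it gives $M''=MT(A)=MAD(A)=M'D(A)$, and since $D(A)\in N$ this exhibits $M''$ as one of the building-neighbours of $M'$, in particular $M''\neq M'$. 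Here I am only using associativity of the operation ``multiply the generating matrix on the right by a matrix, then renormalise by the common power of $t$'', which is immediate from associativity of matrix multiplication modulo homothety.

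The one genuinely computational ingredient — and the step I would expect to be the main obstacle, were it not already done in the preamble — is the verification of $T(A)=AD(A)$ together with $D(A)\in N$. The identity is checked column by column against the normal form: in a column $j$ with $m_j=1$ the normal form forces all off-diagonal entries to vanish, so replacing $t$ by $t^2$ has the same effect as scaling that column by $t=t^{m_j}$; in a column $j$ with $m_j=0$ every off-diagonal entry $f_{i,j}$ obeys $\deg f_{i,j}<m_i\le 1$ and is therefore a constant, unaffected by $t\mapsto t^2$, which again matches scaling by $t^{m_j}=1$. That $D(A)=\operatorname{diag}(t^{m_1},\dots,t^{m_n})$ lies in $N$ then follows from the constraints defining $N$: some $m_j$ equals $1$ (otherwise the degree bounds force $A=I$), so $D(A)\neq I$; and some $m_j$ equals $0$ (otherwise all off-diagonal entries vanish and $A=tI\sim I$, violating the gcd normalisation), so $\gcd(t^{m_1},\dots,t^{m_n})=1$ and $D(A)$ is already in normal form. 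With these facts in hand the lemma is immediate.
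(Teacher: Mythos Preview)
Your proposal is correct and follows the same approach as the paper: the paper treats the lemma as an immediate consequence of the preceding discussion, which establishes $T(A)=AD(A)$ and $D(A)\in N$, and you simply unpack that implication carefully, adding the explicit checks that $M''\in F(B_H)$, that $M''\neq M$, and re-deriving the column-by-column verification of the key identity. There is no substantive difference in method.
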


We may now prove Theorem~\ref{thm:Complexes}:
\begin{proof}[Proof of Theorem~\ref{thm:Complexes}]
By Proposition~\ref{prop:good pairs}, Theorem~\ref{thm:Good Pair Theorem},
and the discussion in Subsection~\ref{subsec:Bruhat-Tits Buildings},
there is a lattice $\Gamma\le\PGL_{n}\left(l_{0}\right)$ of arbitrarily
large covolume, such that if we denote $Y'=\Gamma\cap H\backslash B_{H}$,
$X=\Gamma\backslash B_{G}$, then $\left|Y'\right|=O\left(\left|X\right|^{1/2}\right)$
and $X$ and $Y'$ are Ramanujan complexes.

There is also a natural map $F_{\Gamma}\colon Y'\to X$. It holds that $Y=F_{\Gamma}\left(Y'\right)$
is the image of $F\left(B_{H}\right)$ under the projection $B_{G}\to\Gamma\backslash B_{G}=X$.
By Lemma~\ref{lem:no unique neighbors} $F\left(B_{H}\right)$ has
no unique neighbors in $B_{G}$. Therefore, the set $Y$ -- the projection
of $F\left(B_{H}\right)$ to $X$ -- has no unique neighbors.
\end{proof}

We may also complete the non-explicit part proof of Theorem~\ref{thm:vertex expansion intro} (i.e., for even $q$). 
\begin{proof}[Proof of Theorem~\ref{thm:vertex expansion intro}]
For $n=2$ the complex $X$ of Theorem~\ref{thm:Complexes} is a graph, and there is a subset $Y \subset X$, $|Y|= O(\sqrt{|X|})$, such that each $y\in N\left(Y\right)$ is connected to at least two vertices of $Y$. Then we can show that every
$y\in N\left(Y\right)$ is connected to precisely $2$ vertices of
$N\left(Y\right)$, as in the proof of Lemma~\ref{lem:Symmetry lemma} (see also the proof of Theorem~\ref{thm:edge expansion} in Subsection~\ref{subsec:Edge-Expansion}). 
\end{proof}

\subsection{\label{subsec:Edge-Expansion}Edge Expansion}

Let us now take $n=2$, $l_{0}=\F_{q^{2}}\left(\left(t\right)\right)$
and $G=\PGL_{2}\left(l_{0}\right)$. Let $k_{0}=\F_{q}\left(\left(t\right)\right)$,
and $H=\PGL_{2}\left(k_{0}\right)$, which is a subgroup of $G$.

The Bruhat-Tits building $B_{G}$ of $G$, which is described in the
previous section, is a $\left(q^{2}+1\right)$-regular tree. Explicitly,
the set $N$ determining the neighbors contains $\left(\begin{array}{cc}
1 & 0\\
0 & t
\end{array}\right)$ and $\left(\begin{array}{cc}
t & a\\
0 & 1
\end{array}\right)$ for $a\in\F_{q^{2}}$.

The subgroup $H$ acts on $B_{G}$, and the stabilizer of the standard
module $M_{0}$ is $H\cap\PGL_{2}\left(\F_{q^{2}}\left[\left[t\right]\right]\right)=\PGL_{2}\left(\F_{q}\left[\left[t\right]\right]\right)$,
which is a maximal compact open subgroup of $H$. We therefore have
a map $F:B_{H}\to B_{G}$. Let $N_{H}$ be the set determining the
neighbors in $B_{H}$, which contains $\left(\begin{array}{cc}
1 & 0\\
0 & t
\end{array}\right)$ and $\left(\begin{array}{cc}
t & a\\
0 & 1
\end{array}\right)$ for $a\in\F_{q}$. Notice that $N_{H}\subset H$. Moreover, for $A\in N_{H}$
and $M\in B_{H}$ described by its matrix, $F\left(MA\right)=F\left(M\right)A$.
In other words, adjacent vertices in $B_{H}$ are sent to adjacent
vertices in $B_{G}$ (see Figure~\ref{fig:edge expansion} for a
special case).

The discussion above implies:
\begin{lem}
\label{lem:edge exapnsion embedding lemma}Every vertex in $F\left(B_{H}\right)\subset B_{G}$
is connected to $q+1$ other vertices of $F\left(B_{H}\right)$.
\end{lem}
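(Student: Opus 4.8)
The plan is to read off the lower bound directly from the structural facts recorded just before the statement, and to get the matching upper bound from the one genuinely non-formal input available here, namely that $B_G$ is a tree.

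First I would recall that $F\colon B_H\to B_G$ is the map $hK_H\mapsto hK$ induced by the inclusion $H\hookrightarrow G$ together with $K_H=H\cap K$, and that it is \emph{injective}: if $h_1K=h_2K$ with $h_1,h_2\in H$, then $h_1^{-1}h_2\in K\cap H=K_H$. By the discussion preceding the lemma, $N_H\subset H$ and $F(MA)=F(M)A$ for every $M\in B_H$ and $A\in N_H$, so $F$ sends adjacent vertices of $B_H$ to adjacent vertices of $B_G$; thus $F$ is an injective graph homomorphism of the $(q+1)$-regular tree $B_H$ into the $(q^2+1)$-regular tree $B_G$. In particular, for a fixed $M\in B_H$, the $q+1$ distinct neighbours $MA$ ($A\in N_H$) of $M$ in $B_H$ are carried to $q+1$ distinct neighbours $F(M)A$ of $F(M)$, all lying in $F(B_H)$. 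This already gives ``$F(M)$ has at least $q+1$ neighbours in $F(B_H)$''.

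The remaining point, that there are no further neighbours, is the heart of the matter, and I would prove it by showing that $F$ is an \emph{isometric} embedding. Let $M_0,M_1,\dots,M_r$ be the geodesic in $B_H$ between two vertices. Then $F(M_0),\dots,F(M_r)$ is a non-backtracking walk in $B_G$: an equality $F(M_{i-1})=F(M_{i+1})$ would force $M_{i-1}=M_{i+1}$ by injectivity, contradicting that the $M_i$ lie on a geodesic. Since a non-backtracking walk in a tree is a geodesic, we get $d_{B_G}(F(M_0),F(M_r))=r=d_{B_H}(M_0,M_r)$. Hence, for $M,M'\in B_H$, one has $M\sim M'$ in $B_H$ if and only if $F(M)\sim F(M')$ in $B_G$; equivalently, the subgraph of $B_G$ induced on the vertex set $F(B_H)$ is isomorphic, via $F$, to the $(q+1)$-regular tree $B_H$. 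Therefore every vertex of $F(B_H)$ is joined to exactly $q+1$ other vertices of $F(B_H)$, which is the assertion.

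I expect the only real subtlety to be exactly this last step, i.e. ruling out that two distinct non-adjacent vertices of $B_H$ get mapped to adjacent vertices of $B_G$; everything else is bookkeeping with the explicit neighbour sets $N$ and $N_H$ already written down. A more computational alternative that I would mention as a remark is to use the normalized-matrix description of $B_G$: a vertex of $B_G$ lies in $F(B_H)$ precisely when its normalized matrix has entries in $\F_q[t]$, and among the $q^2+1$ matrices $A_M A$ with $A\in N$, only the $q+1$ with $A\in N_H$ stay $\F_q[t]$-rational after normalization, since the off-diagonal contribution coming from the matrix with top row $(t,a)$ and bottom row $(0,1)$ involves $a\in\F_{q^2}\setminus\F_q$ and hence is not in $\F_q[t]$. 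I would take the tree argument as the main proof, as it is shorter and uses only facts already in the excerpt.
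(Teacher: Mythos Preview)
Your argument is correct. The paper's own ``proof'' of the lemma is nothing more than the sentence ``The discussion above implies:'', i.e.\ it only records that $N_H\subset N$ and $F(MA)=F(M)A$ for $A\in N_H$, which gives the \emph{lower} bound: every vertex of $F(B_H)$ has at least $q+1$ neighbours inside $F(B_H)$. That is all the paper needs downstream --- in the proof of Theorem~\ref{thm:edge expansion} the lemma is invoked only as ``at least $q+1$'', and the exact $(q+1)$-regularity of the finite image $Y$ is then deduced indirectly from Kahale's Theorem~\ref{thm:Kahale's edge expansion} together with the symmetry trick of Lemma~\ref{lem:Symmetry lemma}.

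You take a genuinely different and more direct route: you prove that $F$ is an \emph{isometric} embedding of the $(q+1)$-regular tree $B_H$ into the $(q^2+1)$-regular tree $B_G$ (injective graph map $\Rightarrow$ non-backtracking image of a geodesic $\Rightarrow$ geodesic, since $B_G$ is a tree), and hence the induced subgraph on $F(B_H)$ is exactly $(q+1)$-regular already at the infinite level. This is cleaner and more self-contained than the paper's approach, and it makes the appeal to Kahale in the proof of Theorem~\ref{thm:edge expansion} unnecessary for establishing regularity (though Kahale is of course still needed to show the bound is tight). Your computational alternative via the normalized-matrix description is also valid, but the tree argument is indeed the right main proof.
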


We may now prove Theorem~\ref{thm:edge expansion} from the introduction:
\begin{proof}[Proof of Theorem~\ref{thm:edge expansion}]
By Proposition~\ref{prop:good pairs}, Theorem~\ref{thm:Good Pair Theorem},
and the discussion in Subsection~\ref{subsec:Bruhat-Tits Buildings},
there is a lattice $\Gamma\le\PGL_{2}\left(l_{0}\right)$ of arbitrarily large covolume, such that if we denote $Y'=\Gamma\cap H\backslash B_{H}$,
$X=\Gamma\backslash B_{G}$, then $\left|Y'\right|=O\left(\left|X\right|^{1/2}\right)$
and $X$ and $Y'$ are Ramanujan graphs.

There is also a natural map $F_{\Gamma}\colon Y'\to X$. By Lemma~\ref{lem:edge exapnsion embedding lemma},
every vertex of $Y=F_{\Gamma}\left(Y'\right)$ is connected to at
least $q+1$ other vertices of $Y$, as $Y=F_{\Gamma}\left(Y\right)$
is the image of $F\left(Y\right)$ by the projection map $B_{G}\to\Gamma\backslash B_{G}$.

We claim that $Y$ is $\left(q+1\right)$-regular. This also implies
that $Y$ is a quotient of $Y'$, and is therefore a Ramanujan graph.

If $Y$ is not $\left(q+1\right)$-regular, there is a vertex $y\in Y$
that is connected to more than $\left(q+1\right)$ other elements
of $Y$. We next use symmetry as in the proof of Lemma~\ref{lem:Symmetry lemma}.
Notice that the group $\Gamma\left(w_{1}\right)\backslash\Gamma$
acts on $X$, and its subgroup $\Gamma'\left(v_{1}\right)\backslash\Gamma'$
(using the natural embedding $\Gamma'\left(v_{1}\right)\backslash\Gamma'\to\Gamma\left(w_{1}\right)\backslash\Gamma$)
preserves $Y$. Therefore, there are $\Theta\left(\left|\Gamma'\left(v_{1}\right)\backslash\Gamma'\right|\right)=\Theta\left(\left|Y\right|\right)$
vertices in $Y$ that are connected to more than $q+1$ vertices.
Since the minimal degree of $Y$ is $q+1$, the average degree is
greater than $q+1+\delta$ for some explicit $\delta>0$. This is
impossible by Kahale's Theorem~\ref{thm:Kahale edge expansion intro}.
\end{proof}

\section{\label{sec:Moore Bound}Expansion Using Moore's Bound}

In this section, we reprove Kahale's lower bounds about vertex and edge expansion in Ramanujan graphs. While the bounds we get are a bit weaker than Kahale's, we believe that they are easier to understand.

First, let us set notations. An undirected graph is a finite set $X$
of vertices, a finite set $E$ of directed edges, two maps $s,t\colon E\to X$,
and an involution $\overline{\cdot}\colon E\to E$, satisfying the conditions
$s\left(\overline{e}\right)=t\left(e\right)$, $\overline{e}\ne e$.
We allow multiple edges and self loops, but no ``half edges''. For
$x\in X$ we let $d_{x}=\#\left\{ e\in E:s\left(e\right)=x\right\} $
be the degree of $x$. We assume that $X$ is connected.

A non-backtracking path of length $l$ in $X$ is a sequence $\left(e_{1},...,e_{l}\right)$
of edges, with $t\left(e_{i}\right)=s\left(e_{i+1}\right)$ and $e_{i+1}\ne\overline{e_{i}}$.
We denote by $M_{l}\left(X\right)$ the number of non-backtracking
paths of length $l$ in $X$.

Given a subset $S\subseteq X$, we have an induced graph on $S$,
containing all the edges $e\in E$ with $s\left(e\right),t\left(e\right)\in S$.
Therefore, $M_{l}\left(S\right)$ is well-defined. We also denote by
$M_{l}\left(S,X\right)$ the non-backtracking paths $\left(e_{1},...,e_{l}\right)$
in $X$ such that $s\left(e_{1}\right),t\left(e_{l}\right)\in S$.

Our proofs uses the results of \cite{alon2002moore}, whose main technical
result is:
\begin{thm}[\cite{alon2002moore}]
\label{thm:Moore bound}Let $X$ be an undirected graph with $m$
directed edges, and assume that $X$ has no vertices of degree $1$.
Let
\[
\tilde{d}-1=\left(\prod_{e\in E}\left(d_{s\left(e\right)}-1\right)\right)^{1/m}=\left(\prod_{x\in X}\left(d_{x}-1\right)^{d_{x}}\right)^{1/m},
\]
i.e., the geometric average over the edges of the degree of their
source vertex minus $1$. Then
\[
M_{l}\left(X\right)\ge m\left(\tilde{d}-1\right)^{l-1}.
\]
\end{thm}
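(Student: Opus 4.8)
The plan is to prove the bound by controlling a \emph{geometric} average of directed‑edge path counts and only at the very end passing to the arithmetic average $M_l(X)$. For a directed edge $e\in E$ and $l\ge 1$ set $f_l(e)$ to be the number of non‑backtracking paths $(e_1,\dots,e_l)$ with $e_l=e$; then $f_1(e)=1$ for every $e$, the identity $M_l(X)=\sum_{e\in E}f_l(e)$ holds by classifying paths according to their last edge, and every non‑backtracking path of length $l+1$ ending at $e$ arises uniquely by appending $e$ to one of length $l$ ending at an admissible $f$, giving the recursion
\[
f_{l+1}(e)=\sum_{\substack{f\in E:\ t(f)=s(e)\\ f\neq\overline e}} f_l(f),
\]
a sum over exactly $d_{s(e)}-1$ terms. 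Here the hypothesis that $X$ has no degree‑$1$ vertex enters for the first time: it forces $d_{s(e)}-1\ge 1$, so the sum is nonempty and, by induction on $l$, $f_l(e)\ge 1$ everywhere — which is what makes the geometric means below well defined.

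Next I would apply AM--GM term by term to the recursion, $f_{l+1}(e)\ge (d_{s(e)}-1)\bigl(\prod_{f} f_l(f)\bigr)^{1/(d_{s(e)}-1)}$ over the $d_{s(e)}-1$ admissible $f$, and multiply over all $e\in E$. Using $\prod_{e\in E}(d_{s(e)}-1)=(\tilde d-1)^{m}$ directly from the definition of $\tilde d$, this gives
\[
\prod_{e\in E} f_{l+1}(e)\ \ge\ (\tilde d-1)^{m}\ \prod_{e\in E}\Bigl(\ \prod_{\substack{f:\ t(f)=s(e)\\ f\neq\overline e}} f_l(f)\Bigr)^{1/(d_{s(e)}-1)} .
\]
The point I expect to be the crux is the bookkeeping on the right‑hand product. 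A fixed directed edge $g$ occurs in the inner product exactly for the $e$ with $s(e)=t(g)$ and $e\neq\overline g$, i.e.\ for $d_{t(g)}-1$ values of $e$, and each such occurrence carries the exponent $1/(d_{s(e)}-1)=1/(d_{t(g)}-1)$; hence the total exponent of $f_l(g)$ is exactly $1$. This is a small ``local reversibility'' identity for the non‑backtracking walk — a backward step out of $t(g)$ is matched bijectively with the forward step into $t(g)$ — and it again uses $d\ge 2$ to keep the matching one‑to‑one. Consequently the double product collapses to $\prod_{g\in E}f_l(g)$, leaving the clean recursion $\prod_{e}f_{l+1}(e)\ge(\tilde d-1)^{m}\prod_{e}f_l(e)$.

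Finally I would iterate from the base case $\prod_{e\in E}f_1(e)=1$ to obtain $\prod_{e\in E}f_l(e)\ge(\tilde d-1)^{m(l-1)}$, and apply AM--GM once more, now to the $m$ numbers $f_l(e)$, to conclude
\[
M_l(X)=\sum_{e\in E}f_l(e)\ \ge\ m\Bigl(\prod_{e\in E}f_l(e)\Bigr)^{1/m}\ \ge\ m(\tilde d-1)^{l-1}.
\]
Apart from the exponent‑counting identity, the whole argument is the termwise AM--GM of the induction step together with one further AM--GM and an induction on $l$; no spectral input or structural facts about $X$ beyond minimum degree $2$ are needed.
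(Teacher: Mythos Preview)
Your argument is correct and is essentially the original proof of Alon--Hoory--Linial: count non-backtracking paths by their last edge, apply AM--GM to the one-step recursion, use the local reversibility bookkeeping so that the geometric mean $\prod_e f_l(e)$ grows by the factor $(\tilde d-1)^m$ at each step, and finish with one more AM--GM. The present paper does not give its own proof of this theorem but simply quotes it from \cite{alon2002moore}, so there is nothing to compare; your write-up reproduces that reference's argument accurately, including the crucial exponent-counting identity (each $g$ contributes total exponent $(d_{t(g)}-1)\cdot\frac{1}{d_{t(g)}-1}=1$) and the role of the minimum-degree-$2$ hypothesis in keeping all $f_l(e)\ge 1$.
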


While the number $\tilde{d}$ is somewhat complicated, it holds:
\begin{lem}
\label{lem:d-comparison}Assume that $X$ is a graph without vertices
of degree $1$. Then:
\begin{enumerate}
\item (\cite{alon2002moore}) $\tilde{d}\ge\overline{d}$, where $\overline{d}$
is the average degree of $X$. \\
Moreover, for every $C>0$ and $\epsilon>0$ there exists $\delta>0$
such that if $\tilde{d}\le\overline{d}+\delta$ and $\overline{d}\le C$,
then there is an integer $d\ge2$ satisfying $\left|\tilde{d}-d\right|\le\epsilon$,
and all but $\epsilon n$ of the vertices of $X$ are of degree $d$.
\item If $X$ is bipartite, $\tilde{d}-1\ge\sqrt{\left(\overline{d}_{L}-1\right)\left(\overline{d}_{R}-1\right)}$,
where $\overline{d}_{L}$ (resp. $\overline{d}_{R}$) is the average
degree of the left side (resp. the right side) of $X$.
\end{enumerate}
\end{lem}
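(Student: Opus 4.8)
The plan is to reduce the whole lemma to Jensen's inequality applied to the single convex function $g(t)=t\log(t-1)$, supplemented by a stability (near-equality) analysis for the ``Moreover'' clause. Write $m=\sum_{x}d_{x}$ for the number of directed edges and $n=|X|$, so that $\log(\tilde d-1)=\frac1m\sum_{x}d_{x}\log(d_{x}-1)=\frac1m\sum_{x}g(d_{x})$. A direct computation gives $g'(t)=\log(t-1)+\tfrac{t}{t-1}$ and $g''(t)=\frac{t-2}{(t-1)^{2}}\ge0$ for $t\ge2$, so $g$ is convex on $[2,\infty)$. Since $X$ is connected with no vertex of degree $1$, every $d_{x}\ge2$, hence $\bar d\ge2$ as well; in the bipartite case both side-averages are likewise $\ge2$. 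The first inequality of item (1) is then exactly \cite{alon2002moore}: Jensen for $g$ gives $\frac1n\sum_{x}g(d_{x})\ge g(\bar d)$, i.e. $\frac1m\sum_{x}g(d_{x})\ge\frac nm\,g(\bar d)=\log(\bar d-1)$, so $\tilde d-1\ge\bar d-1$.

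For the ``Moreover'' clause I would run a quantitative version of the same inequality, tracking the Jensen gap as a sum of Bregman divergences. For an integer $k\ge2$ set $B(k)=g(k)-g(\bar d)-g'(\bar d)(k-\bar d)\ge0$ (convexity), with $B(k)=0$ iff $k=\bar d$; then $\log(\tilde d-1)-\log(\bar d-1)=\frac1m\sum_{x}B(d_{x})$. The hypothesis $\tilde d\le\bar d+\delta$ gives $\log(\tilde d-1)-\log(\bar d-1)\le\log\!\big(1+\tfrac{\delta}{\bar d-1}\big)\le\delta$ (using $\bar d-1\ge1$), so $\sum_{x}B(d_{x})\le m\delta\le Cn\delta$. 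Now I use that the $d_{x}$ are integers in two steps. (a) If $\bar d$ were at distance $\ge\eta_{0}$ from every integer, then $c_{1}:=\inf\{B(k):\bar d\in[2,C],\ \operatorname{dist}(\bar d,\Z)\ge\eta_{0},\ k\in\Z,\ k\ge2\}>0$: the infimum is attained since $B(k)\to\infty$ as $k\to\infty$, uniformly for $\bar d$ in a compact set, and it is nonzero since no integer equals $\bar d$. Then every vertex contributes $\ge c_{1}$, forcing $nc_{1}\le Cn\delta$, impossible once $\delta<c_{1}/C$; hence (taking $\eta_{0}<1/2$) $\bar d$ lies within $\eta_{0}$ of a unique integer $d\ge2$. (b) Similarly $c_{2}:=\inf\{B(k):\bar d\in[2,C],\ \operatorname{dist}(\bar d,\Z)\le\eta_{0},\ k\in\Z,\ k\ge2,\ k\ne d\}>0$ (here $k\ne d$ forces $|k-\bar d|\ge1-\eta_{0}>0$), so the number of vertices with $d_{x}\ne d$ is at most $c_{2}^{-1}\sum_{x}B(d_{x})\le Cn\delta/c_{2}$. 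Choosing $\eta_{0}=\min(\epsilon/2,1/4)$ and then $\delta=\min(\epsilon/2,\,c_{1}/C,\,\epsilon c_{2}/C)$ makes the number of off-mode vertices $\le\epsilon n$ and gives $|\tilde d-d|\le|\tilde d-\bar d|+|\bar d-d|\le\delta+\eta_{0}\le\epsilon$.

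For item (2), when $X$ is bipartite with sides $L,R$ every edge joins $L$ to $R$, so the number $\mathcal E$ of undirected edges satisfies $\sum_{x\in L}d_{x}=\sum_{x\in R}d_{x}=\mathcal E$, $m=2\mathcal E$, $\bar d_{L}=\mathcal E/|L|$ and $\bar d_{R}=\mathcal E/|R|$. Applying Jensen to the convex $g$ separately over $L$ and over $R$ gives $\sum_{x\in L}g(d_{x})\ge|L|\,g(\bar d_{L})=\mathcal E\log(\bar d_{L}-1)$ and likewise over $R$; summing and dividing by $m=2\mathcal E$ yields $\log(\tilde d-1)\ge\tfrac12\big(\log(\bar d_{L}-1)+\log(\bar d_{R}-1)\big)$, which is the claimed bound after exponentiating.

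The convexity and Jensen steps are routine bookkeeping; the genuinely delicate point I expect to be the main obstacle is the stability argument in the ``Moreover'' clause --- showing that near-equality forces $\bar d$ to be close to an integer and that any off-mode degree carries a uniformly positive Bregman cost. This is where one must be careful with the compactness estimates for $c_{1},c_{2}$ and with the fact that, although $g''$ degenerates at $t=2$, the Bregman divergence $B(k)$ stays strictly positive for every integer $k\ne\bar d$, so the argument does not break down near $\bar d=2$.
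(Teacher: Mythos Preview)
Your approach is essentially the same as the paper's: both reduce everything to convexity of $g(t)=t\log(t-1)$ on $[2,\infty)$, apply Jensen globally for part~(1), apply Jensen on each side separately for part~(2), and invoke strict convexity for the ``Moreover'' clause. Your treatment of the stability statement via explicit Bregman divergences and the compactness estimates for $c_1,c_2$ is considerably more detailed than the paper's one-line appeal to strict convexity, and your attention to the degeneracy $g''(2)=0$ is a point the paper leaves implicit; but the underlying argument is the same.
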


\begin{rem}
A similar ``moreover'' argument is true for the bipartite case,
but we will not need it.
\end{rem}

\begin{proof}
For the first claim, it holds that $m=\overline{d}n$. Therefore $\tilde{d}-1=\left(\prod_{x\in X}\left(d_{x}-1\right)^{d_{x}}\right)^{1/\overline{d}n}$.
So
\[
\log\left(\tilde{d}-1\right)=\frac{1}{\overline{d}}\frac{1}{n}\sum_{x\in X}d_{x}\log\left(d_{x}-1\right)\ge\frac{1}{\overline{d}}\overline{d}\log\left(\overline{d}-1\right)=\log\left(\overline{d}-1\right),
\]
where the inequality follows from the convexity of $d\log\left(d-1\right)$
for $d\ge2$. The ``moreover'' part follows from the strict convexity
of $d\log\left(d-1\right)$ for $d\ge2$.

For the second claim, let $n_{L},n_{R}$ be the number of vertices
in $X_{L},X_{R}$ -- the right and left sides of $X$. Then
\[
n_{L}\overline{d}_{L}=n_{R}\overline{d}_{R}=m/2.
\]
Therefore
\begin{align*}
\tilde{d}-1 & =\left(\prod_{x\in X}\left(d_{x}-1\right)^{d_{x}}\right)^{1/\overline{d}n}\\
 & =\left(\prod_{x\in X_{L}}\left(\left(d_{x}-1\right)^{d_{x}}\right)^{1/\overline{d}_{L}n_{L}}\right)^{1/2}\left(\prod_{x\in X_{R}}\left(\left(d_{x}-1\right)^{d_{x}}\right)^{1/\overline{d}_{R}n_{R}}\right)^{1/2}\\
 & \le\sqrt{\left(\overline{d}_{L}-1\right)\left(\overline{d}_{R}-1\right)},
\end{align*}
where the inequality is as in the proof of the first claim.
\end{proof}
We combine the lower bound on $M_{l}\left(S\right)$ coming from Moore's
bound, with a standard upper bound from spectral graph theory, which we now describe.

We let $L^{2}\left(X\right)$ be the set of functions $f\colon X\to\C$,
with the usual inner product. The adjacency operator $A\colon L^{2}\left(X\right)\to L^{2}\left(X\right)$
is defined as
\[
\left(Af\right)\left(x\right)=\sum_{e\in E,t\left(e\right)=x}f\left(s\left(e\right)\right).
\]

The operator $A$ is self-adjoint and therefore diagonalizable, with
real eigenvalues and an orthogonal basis of eigenvectors. If $X$
is $d$-regular, the constant function is an eigenvector of $A$,
with eigenvalue $d$. If $X$ is bipartite, $A$ has the eigenvalue
$-d$, corresponding to the eigenvector that is equal to a constant
$C$ on one part, and is equal to $-C$ on the other part. If $X$
is connected, as we assume, those are the only eigenvectors with eigenvalues
of absolute value $d$. We say that $X$ is Ramanujan if every eigenvalue
$\lambda$ of $A$ satisfies either $\left|\lambda\right|=d$ or $\left|\lambda\right|\le2\sqrt{d-1}$.
This bound is optimal for large graphs by the Alon-Boppana Theorem (\cite{nilli1991second}).
\begin{lem}
\label{lem:Walks bound}Let $X$ be a $d$-regular Ramanujan graph
with $n$ vertices, and let $S\subset X$ be a subset.

For $l$ such that $\left|S\right|\left(d-1\right)^{l/2}\le n$, the
number $M_{l}\left(S,X\right)$ of non-backtracking paths in $X$
that start and end in $S$ satisfies that $M_{l}\left(S,X\right)\le\left|S\right|\left(l+3\right)\left(d-1\right)^{l/2}$.
\end{lem}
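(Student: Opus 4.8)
The plan is to express $M_l(S,X)$ in terms of the non-backtracking adjacency operator $B$ acting on directed edges, use the spectral decomposition of $B$ for a Ramanujan graph, and carefully sum the geometric series that arises. Recall that for a $d$-regular graph the non-backtracking operator $B$ on $L^2(E)$ satisfies the Ihara-type identity relating its spectrum to that of the adjacency operator $A$: every eigenvalue $\lambda$ of $A$ contributes a pair of eigenvalues $\mu$ of $B$ with $\mu^2 - \lambda\mu + (d-1) = 0$, together with the "trivial" eigenvalues $\pm 1$. When $X$ is Ramanujan, every non-trivial $\lambda$ satisfies $|\lambda|\le 2\sqrt{d-1}$, so the corresponding $\mu$ have $|\mu| = \sqrt{d-1}$; the trivial eigenvalues $\pm(d-1), \pm 1$ come from $\lambda = \pm d$. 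Thus, writing $\one_S^{\mathrm{in}}, \one_S^{\mathrm{out}} \in L^2(E)$ for the indicators of edges entering/leaving $S$, we have, up to the boundary corrections coming from the first and last step of a path,
\[
M_l(S,X) = \sum_{j} \langle \one_S^{\mathrm{out}}, B^{l-1} \one_S^{\mathrm{in}}\rangle\text{-type terms}.
\]
The main point is that the bulk of the spectrum of $B$ has modulus exactly $\sqrt{d-1}$, so $\|B^{k}\|$ on the non-trivial part is $O(k\,(d-1)^{k/2})$ — the factor $k$ appearing because $B$ is not normal and its non-trivial Jordan-type blocks (for eigenvalues of the same modulus) can produce polynomial growth, but of degree at most $1$ in this setting.

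First I would set up the exact relation between $M_l(S,X)$ and powers of $B$: a non-backtracking path of length $l$ starting and ending in $S$ corresponds to choosing a starting directed edge $e_1$ with $s(e_1)\in S$, applying $B$ a total of $l-1$ times, and requiring $t(e_l)\in S$. This gives $M_l(S,X) = \langle \mathbf{1}_{\{t\in S\}}, B^{l-1}\mathbf{1}_{\{s\in S\}}\rangle$ where the indicators are over directed edges. Then I would decompose $\mathbf{1}_{\{s\in S\}}$ into its component along the trivial eigenspaces of $B$ (eigenvalues $d-1, -1$, and if $X$ is bipartite also $-(d-1), 1$) and its component in the non-trivial part $W$. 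The trivial part contributes the leading term: the eigenvector for $d-1$ is (proportional to) the all-ones function on $E$, against which $\mathbf{1}_{\{s\in S\}}$ has inner product $|S|\cdot d$ divided by the normalization $|E| = nd$, so this term contributes roughly $|S|^2 (d-1)^{l-1}/n$, which is $\le |S|(d-1)^{l/2}$ precisely under the hypothesis $|S|(d-1)^{l/2}\le n$ (the eigenvalue $-1$ contributes $O(|S|)$, harmless). On $W$, the operator $B$ has all eigenvalues of modulus $\sqrt{d-1}$; writing $B = \sqrt{d-1}\,U + N$ in a suitable basis (or invoking the bound $\|B^k|_W\| \le (k+1)(d-1)^{k/2}$, which follows from the structure of $B$ on a Ramanujan graph — the key input from the Ihara relation and from the fact that $B$ restricted to each $2$-dimensional $\lambda$-block is a companion matrix with both eigenvalues on the circle of radius $\sqrt{d-1}$), I get that the $W$-contribution is at most $\|\mathbf{1}_{\{t\in S\}}\|\cdot \|B^{l-1}|_W\| \cdot \|\mathbf{1}_{\{s\in S\}}\| \le \sqrt{|S|d}\cdot (l+2)(d-1)^{(l-1)/2}\cdot \sqrt{|S|d} = O(|S|(l+2)(d-1)^{(l-1)/2})$. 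Adding the two contributions and absorbing the multiplicative $d$ and the exponent shift $l-1$ versus $l$ into constants gives the bound $M_l(S,X)\le |S|(l+3)(d-1)^{l/2}$.

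The step I expect to be the main obstacle is obtaining the sharp operator-norm bound $\|B^{k}|_W\| = O(k\,(d-1)^{k/2})$ with the right linear (not quadratic or worse) dependence on $k$. This requires understanding the Jordan structure of $B$ on the non-trivial subspace: $B$ is block-conjugate to a direct sum of companion matrices $\begin{pmatrix} \lambda & -(d-1)\\ 1 & 0\end{pmatrix}$ over the eigenvalues $\lambda$ of $A$, and when $|\lambda| < 2\sqrt{d-1}$ this block is diagonalizable with eigenvalues of modulus $\sqrt{d-1}$ and bounded condition number, while at the single degenerate point $|\lambda| = 2\sqrt{d-1}$ (which may or may not occur) one gets a genuine $2\times 2$ Jordan block whose powers grow like $k(d-1)^{(k-1)/2}$. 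Controlling the condition numbers of the diagonalizing transformations uniformly as $\lambda$ ranges over the interval $[-2\sqrt{d-1}, 2\sqrt{d-1}]$ — so that the constant in $O(k\,(d-1)^{k/2})$ does not blow up — is the delicate part; this is essentially the standard computation underlying the non-backtracking spectral proofs of Alon–Boppana-type results, and I would cite or reproduce it. Alternatively, and perhaps more cleanly, one can avoid $B$ entirely and argue directly with the Chebyshev-like polynomials $p_k$ (the "non-backtracking path counting polynomials" of the tree) satisfying $A^{\text{walk structure}}$: since $M_l(S,X) = \sum_{x,y\in S} (\text{number of NB paths } x\to y \text{ of length } l) = \langle \one_S, p_l(A)\one_S\rangle$ where $p_l$ is the degree-$l$ polynomial with $p_l(2\sqrt{d-1}\cos\theta) = (d-1)^{l/2}\frac{\sin((l+1)\theta) - \frac{1}{d-1}\sin((l-1)\theta)}{\sin\theta}$, and then $|p_l(\lambda)| \le (l+1)(d-1)^{l/2}$ for $|\lambda|\le 2\sqrt{d-1}$ by the Chebyshev bound, while $p_l(d) = \Theta((d-1)^l)$ handles the trivial eigenvalue; the hypothesis on $l$ is exactly what makes the trivial term dominated by $|S|(d-1)^{l/2}$. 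This second route localizes the entire difficulty into the elementary estimate $|p_l(\lambda)|\le (l+1)(d-1)^{l/2}$ on the Ramanujan interval, which is a direct trigonometric computation, and is the approach I would actually write up.
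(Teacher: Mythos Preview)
Your second route---writing $M_l(S,X)=\langle \one_S, p_l(A)\one_S\rangle$ for the non-backtracking path-counting polynomials $p_l$, expressing $p_l$ via Chebyshev polynomials to get $|p_l(\lambda)|\le (l+1)(d-1)^{l/2}$ on the Ramanujan interval, and then splitting $\one_S$ into its constant part plus an orthogonal remainder---is exactly the argument the paper gives (the paper calls the operator $A_l$, derives the same Chebyshev formula, and does the same decomposition $\one_S=\frac{|S|}{n}\one_X+r$). Your first route through the edge non-backtracking operator $B$ would also work but is, as you note, more cumbersome because of the non-normality; the paper avoids it for the same reason.
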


\begin{proof}
We define a length $l$ non-backtracking version of the adjacency
operator, $A_{l}:L^{2}\left(X\right)\to L^{2}\left(X\right)$,
\[
\left(A_{l}f\right)\left(x\right)=\sum_{\left(e_{1},...,e_{l}\right),t\left(e_{l}\right)=x}f\left(s\left(e_{1}\right)\right),
\]
where the sum is over the non-backtracking paths in $X$. Notice that
it holds that $M_{l}\left(S,X\right)=\left\langle A_{l}\mathds{1}_{S},\mathds{1}_{S}\right\rangle $,
where $\one_{S}$ is the characteristic function of $S$.

Since the graph is $d$-regular, there is a simple relation between
the $A_{l}$-s, given by
\begin{align*}
A & =A_{1}\\
A^{2} & =dI+A_{2}\\
AA_{l} & =\left(d-1\right)A_{l-1}+A_{l+1}\,\,\,\,\,\,\,\,l>1.
\end{align*}

The relations imply that $A_{l}$ is a polynomial in $A$, given
explicitly for $l\ge2$ by
\[
A_{l}=\left(d-1\right)^{l/2}\left(\left(1-\left(d-1\right)^{-1}\right)U_{l}\left(\frac{A}{2\sqrt{d-1}}\right)+2\left(d-1\right)^{-1}T_{l}\left(\frac{A}{2\sqrt{d-1}}\right)\right),
\]
where $T_{l}$ and $U_{l}$ are the Chebyshev polynomials of the first
and second kind, given by $U_{l}\left(\cos\theta\right)=\frac{\sin\left(\left(n+1\right)\theta\right)}{\sin\theta}$,
$T_{l}\left(\cos\theta\right)=\cos\left(n\theta\right)$. In particular,
if $f\in L^{2}\left(X\right)$ is an eigenvector of $A$, $f$ is
also an eigenvector of $A_{l}$. If the eigenvalue of $A$ is bounded
in absolute value by $2\sqrt{d-1}$, the corresponding eigenvalue
of $A_{l}$ is bounded in absolute value by $\left(l+1\right)\left(d-1\right)^{l/2}$.

Returning to the relation $M_{l}\left(S,X\right)=\left\langle A_{l}\mathds{1}_{S},\mathds{1}_{S}\right\rangle $,
first assume that $X$ is non-bipartite. We write $\mathds{1}_{S}=\frac{\left|S\right|}{n}\mathds{1}_{X}+r$,
with $r\perp\mathds{1}_{X}$ and $\n r_{2}^{2}\le\n{\one_{S}}_{2}^{2}=\left|S\right|$.
Notice that $A_{l}\one_{X}=d\left(d-1\right)^{l}\one_{X}$. By the
Ramanujan assumption and the fact that $A_{l}$ is self-adjoint,
\[
\left|\left\langle A_{l}r,r\right\rangle \right|\le\left(l+1\right)\left(d-1\right)^{l/2}\n r_{2}^{2}.
\]

Then
\begin{align*}
M_{l}\left(S,X\right) & =\left\langle A_{l}\mathds{1}_{S},\mathds{1}_{S}\right\rangle \\
 & =\frac{\left|S\right|^{2}}{n^{2}}\left\langle A_{l}\mathds{1}_{X},\mathds{1}_{X}\right\rangle +\left\langle A_{l}r,r\right\rangle \\
 & \le\frac{\left|S\right|^{2}}{n^{2}}nd\left(d-1\right)^{l}+\left(l+1\right)\left(d-1\right)^{l/2}\n r_{2}^{2}\\
 & \le\left|S\right|\left(\frac{\left|S\right|d\left(d-1\right)^{l-1}}{n}+\left(l+1\right)\left(d-1\right)^{l/2}\right)\\
 & \le\left|S\right|\left(\frac{d}{d-1}+l+1\right)\left(d-1\right)^{l/2}.
\end{align*}

We conclude by noting that $\frac{d}{d-1}\le2$. The case of bipartite
graphs is similar.
\end{proof}
We can compare $M_{l}\left(S,X\right)$ with $M_{l}\left(S\right)$,
as it is obvious that
\[
M_{l}\left(S,X\right)\ge M_{l}\left(S\right).
\]

The case of edge expansion essentially follows directly:
\begin{thm}
\label{thm:Kahale edge expansion}Let $X$ be a $d$-regular Ramanujan
graph and let $S\subset X$ be a subset with $\left|S\right|\left(d-1\right)^{l/2}\le\left|X\right|$.
Then the average degree of the graph $S$ induced from $X$ is bounded
by $\left(1+O\left(\frac{\ln\left(l+3\right)}{l}\right)\right)\sqrt{d-1}+1$.

Moreover, assuming $\left|S\right|=o\left(\left|X\right|\right)$,
for $\delta>0$ small enough, there is $\epsilon>0$, such that if
the average degree of $\left|S\right|$ is larger than $\sqrt{d-1}+1+\epsilon$,
then $\sqrt{d-1}+1$ is an integer and at most $\delta\left|S\right|$
of the vertices of $\left|S\right|$ have a degree different from $\sqrt{d-1}+1$.
\end{thm}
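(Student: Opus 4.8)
The plan is to combine the Moore-type lower bound (Theorem~\ref{thm:Moore bound}) with the spectral upper bound (Lemma~\ref{lem:Walks bound}) on the number of non-backtracking paths of length $l$ inside $S$. Let $S\subset X$ be a subset with $\left|S\right|\left(d-1\right)^{l/2}\le\left|X\right|$, and let $Y$ denote the graph induced on $S$ by $X$, with average degree $\overline{d}$. First I would reduce to the case where $Y$ has no vertex of degree $1$: one can iteratively delete vertices of degree $\le 1$ from $Y$, which only removes a bounded number of edges per deleted vertex and hence does not increase the average degree by more than a controlled amount (alternatively, handle the degree-$1$ vertices by a separate elementary argument, since they contribute only short dead-end paths). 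Having done that, Theorem~\ref{thm:Moore bound} applied to $Y$ gives
\[
M_{l}\left(S\right)\ge m\left(\tilde{d}-1\right)^{l-1},
\]
where $m=\overline{d}\left|S\right|$ is the number of directed edges of $Y$ and $\tilde{d}$ is the geometric edge-average appearing in the theorem; by part~(1) of Lemma~\ref{lem:d-comparison} we have $\tilde{d}\ge\overline{d}$, so $M_{l}\left(S\right)\ge\overline{d}\left|S\right|\left(\overline{d}-1\right)^{l-1}$.

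Next I would feed this into the chain $M_{l}\left(S\right)\le M_{l}\left(S,X\right)$ together with the bound $M_{l}\left(S,X\right)\le\left|S\right|\left(l+3\right)\left(d-1\right)^{l/2}$ from Lemma~\ref{lem:Walks bound}, which applies precisely because of the hypothesis $\left|S\right|\left(d-1\right)^{l/2}\le\left|X\right|$. Combining,
\[
\overline{d}\left(\overline{d}-1\right)^{l-1}\le\left(l+3\right)\left(d-1\right)^{l/2}.
\]
Taking $\left(l-1\right)$-th roots and using $\overline{d}^{1/(l-1)}\to 1$, this forces $\overline{d}-1\le\left(1+O\left(\tfrac{\ln\left(l+2\right)}{l}\right)\right)\sqrt{d-1}$, i.e. $\overline{d}\le\left(1+O\left(\tfrac{\ln\left(l+2\right)}{l}\right)\right)\sqrt{d-1}+1$, which is the first assertion (the low-order factor $\overline{d}$ on the left and the $\left(l+3\right)$ on the right both contribute only to the $O\left(\tfrac{\ln(l+2)}{l}\right)$ error after taking roots; here I use that $\overline{d}\le d$ is bounded).

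For the ``moreover'' part, assume $\left|S\right|=o\left(\left|X\right|\right)$, which lets us take $l\to\infty$ slowly (e.g. $l=\lfloor c\log\left|X\right|/\log\left|S\right|\rfloor$ or simply any $l\to\infty$ with $l=o(\log(|X|/|S|))$) while keeping the hypothesis of Lemma~\ref{lem:Walks bound} valid, so that the $O\left(\tfrac{\ln(l+2)}{l}\right)$ error tends to $0$. If the average degree of $S$ exceeds $\sqrt{d-1}+1+\epsilon$ for a fixed $\epsilon>0$, then for $l$ large the displayed inequality is violated unless the slack in Lemma~\ref{lem:d-comparison}(1) is small, i.e. unless $\tilde{d}$ is close to $\overline{d}$; by the ``moreover'' clause of Lemma~\ref{lem:d-comparison}(1) this means all but $\delta\left|S\right|$ vertices of $Y$ have a common degree $d_0\ge 2$ with $d_0$ close to $\overline{d}$, and then re-running the estimate with this near-regular structure pins down $d_0$ as the integer nearest $\sqrt{d-1}+1$ and forces $\sqrt{d-1}+1$ itself to be an integer (otherwise $\overline{d}$ could not approach a non-integer while the bulk of $Y$ is $d_0$-regular and the inequality holds). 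The main obstacle I anticipate is the bookkeeping in this last step — making the ``all but $\delta|S|$ vertices are $d_0$-regular'' conclusion of Lemma~\ref{lem:d-comparison}(1) interact cleanly with the path-counting inequality to extract integrality of $\sqrt{d-1}+1$, since one must be careful that the exceptional $\delta|S|$ vertices do not secretly carry enough extra edges to spoil the comparison; quantitatively this requires choosing $\delta$ after $\epsilon$ and tracking that the edge count contributed by exceptional vertices is $O(\delta|S|)$ times a bounded degree, hence negligible against the gap $\epsilon$.
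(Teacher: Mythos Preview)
Your proposal is correct and follows the same strategy as the paper: bound $M_l(S)$ from below via Theorem~\ref{thm:Moore bound} and from above via $M_l(S)\le M_l(S,X)$ together with Lemma~\ref{lem:Walks bound}, then take $l$-th roots. The only difference is that the paper keeps $\tilde d$ (rather than passing to $\overline d$ immediately) in the key inequality, obtaining $\tilde d-1\le (l+3)^{1/l}\sqrt{d-1}$ directly and only then invoking $\overline d\le\tilde d$; this makes the ``moreover'' part a clean one-liner, since $\overline d$ close to $\sqrt{d-1}+1$ then forces $\tilde d-\overline d$ small and the rigidity clause of Lemma~\ref{lem:d-comparison} applies. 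In your version you substitute $\overline d$ for $\tilde d$ before writing the displayed inequality, so that inequality no longer sees $\tilde d$ and the phrase ``the displayed inequality is violated unless the slack in Lemma~\ref{lem:d-comparison}(1) is small'' does not literally make sense --- you would need to revert to the sharper inequality $|S|\,\overline d\,(\tilde d-1)^{l-1}\le M_l(S,X)$ to run that argument, which is exactly what the paper does.
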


\begin{proof}
We may assume that the average degree of $S$ is at least $2$. We
then may remove from $S$ vertices of degree $1$ without lowering
the average degree, until all the degrees are at least $2$. Notice
that if we remove more than $o\left(\left|S\right|\right)$ of the
vertices, then the average degree grows by a constant.

By Theorem~\ref{thm:Moore bound},
\[
M_{l}\left(S\right)\ge
\left|S\right|\overline{d}\left(\tilde{d}-1\right)^{l-1}\ge
\left|S\right|\left(\tilde{d}-1\right)^{l}(d-1)^{-1},
\]
where $\tilde{d}$ is as in the theorem.

On the other hand, by Lemma~\ref{lem:Walks bound},
\[
M_{l}\left(S,X\right)\le\left|S\right|\left(l+3\right)\left(d-1\right)^{l/2}.
\]

Therefore,
\[
\tilde{d}-1\le\left(l+3\right)^{1/l}\left(d-1\right)^{1/2}=\left(1+O\left(\ln\left(l+3\right)/l\right)\right)\left(d-1\right)^{1/2}.
\]

So by Lemma~\ref{lem:d-comparison}, $\overline{d}\le\tilde{d}\le\left(1+O\left(\ln\left(l+3\right)/l\right)\right)\sqrt{d-1}+1$.

The ``moreover'' part follows from the ``moreover'' part of Lemma~\ref{lem:d-comparison}.
\end{proof}
The vertex expansion result is similar, but one should be a bit more
careful when handling vertices of degree $1$.
\begin{thm}
\label{thm:Kahale vertex expansion}Let $X$ be a $d$-regular Ramanujan
graph and let $S\subset X$ be a subset with $\left|S\right|\left(d-1\right)^{l}\le\left|X\right|$.
Let $N\left(S\right)$ be the neighbors of $\left|S\right|$. Then
\[
\left|N\left(S\right)\right|\ge\frac{d}{2}\left|S\right|\left(1-O\left(\frac{\ln\left(l+3\right)}{l}\right)\right).
\]
Moreover, assuming that $\left|S\right|=o\left(\left|X\right|\right)$,
for every $\epsilon>0$ there is $\delta>0$, such that for $\left|X\right|$
large enough, if $\left|N\left(S\right)\right|\le\frac{d}{2}\left|S\right|\left(1+\delta\right)$
then all but at most $\epsilon\left|N\left(S\right)\right|$ of the
vertices of $\left|N\left(S\right)\right|$ are connected to exactly
$2$ vertices of $S$.
\end{thm}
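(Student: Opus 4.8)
The plan is to run the same ``Moore bound versus spectral bound'' sandwich as in the edge-expansion argument (Theorem~\ref{thm:Kahale edge expansion}), but on a bipartite incidence graph rather than on the subgraph induced on $S$. Let $B$ be the bipartite graph whose left part is $S$, whose right part is $N(S)$, with one edge for each edge of $X$ joining $S$ to $N(S)$; since every vertex of $S$ sends all $d$ of its edges into $N(S)$, the graph $B$ is left-$d$-regular, so $|E(B)|=d|S|$. For $v\in N(S)$ write $d_v$ for its degree in $B$, i.e.\ the number of its $X$-neighbours that lie in $S$, so that $\sum_{v}d_v=d|S|$; set $n_1:=\#\{v:d_v=1\}$ (the unique neighbours) and $\mathrm{ex}:=\sum_{v:\,d_v\ge 3}(d_v-2)$. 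A one-line edge count gives the identity
\[
|N(S)|\;=\;\tfrac12\bigl(d|S|+n_1-\mathrm{ex}\bigr),
\]
so, since $n_1\ge 0$, it suffices to prove $\mathrm{ex}=O\!\bigl(\tfrac{\ln(l+2)}{l}\bigr)\,d|S|$; note that unique neighbours only help this inequality, which is the sense in which ``one has to be careful with degree-one vertices''.

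To bound $\mathrm{ex}$, count non-backtracking paths in $B$. A non-backtracking path of length $l'=2l-O(1)$ in $B$ projects injectively to a non-backtracking path of the same length in $X$ all of whose vertices lie in $S\cup N(S)=N(S)$, because a neighbour in $X$ of a vertex of $S$ automatically lies in $N(S)$, so the left/right colouring of $B$ is respected. Splitting these paths according to the side on which they start and applying Lemma~\ref{lem:Walks bound} at length $l'$ (legitimate because the hypothesis $|S|(d-1)^{l}\le|X|$ together with $|N(S)|\le d|S|$ give $|S|(d-1)^{l'/2}\le|X|$ and $|N(S)|(d-1)^{l'/2}\le|X|$ for $l'$ a suitable constant below $2l$) shows there are at most $\bigl(|S|+|N(S)|\bigr)(l'+3)(d-1)^{l'/2}\le 2|N(S)|(l'+3)(d-1)^{l'/2}$ of them. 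On the other hand, after deleting the degree-one vertices and passing to the $2$-core $B'$ of $B$ (so that Theorem~\ref{thm:Moore bound} applies), the number of such paths is at least $2|E(B')|\,(\widetilde d(B')-1)^{l'-1}$, and Lemma~\ref{lem:d-comparison}(2) bounds $\widetilde d(B')-1\ge\sqrt{(\overline d_L(B')-1)(\overline d_R(B')-1)}$. Comparing the two bounds and extracting $l'$-th roots yields
\[
\bigl(\overline d_L(B')-1\bigr)\bigl(\overline d_R(B')-1\bigr)\;\le\;(d-1)\Bigl(1+O\bigl(\tfrac{\ln(l+2)}{l}\bigr)\Bigr),
\]
so whenever $B'$ has retained nearly all of the edges of $B$ together with a left side of degree close to $d$ — in which case $\overline d_L(B')-1$ is close to $d-1$ — one gets $\overline d_R(B')\le 2+O(\tfrac{\ln(l+2)}{l})$, which via $|R(B')|=|E(B')|/\overline d_R(B')$ and the displayed identity gives the desired bound on $\mathrm{ex}$.

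The main obstacle is exactly the hypothesis in the previous sentence: forming the $2$-core removes the $n_1$ unique neighbours, and with them possibly a cascade of further left and right vertices, so a priori $|E(B')|$ and $\overline d_L(B')$ could both be much smaller than $d|S|$ and $d$. The key structural fact to exploit is that a left vertex can only be peeled off once at least $d-1$ of its right-neighbours have been peeled off, so the cascade is dominated by a subcritical branching process of ratio $\le\tfrac1{d-1}$; this gives $|E(B')|\ge d|S|-O(n_1)$ and $\overline d_L(B')\ge d-O(n_1/|S|)$, which suffices as long as $n_1=o(|S|)$. When $n_1$ is a fixed fraction of $|S|$ one must argue separately, combining the fact that the $n_1$ unique neighbours are themselves distinct vertices of $N(S)$ with the estimate above applied only to the vertices of $S$ that have few private (unique-in-$S$) neighbours; making this trade-off quantitative — and uniform down to small $d$ — is the technical heart of the proof, and is why the constant implicit in $O(\ln(l+2)/l)$ is allowed to depend on $d$.

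Finally, the ``moreover'' clause follows quickly from the main estimate and the identity. If $|N(S)|\le\tfrac d2|S|(1+\delta)$ then $\tfrac12(d|S|+n_1-\mathrm{ex})\le\tfrac d2|S|(1+\delta)$ forces $n_1\le\delta d|S|+\mathrm{ex}$, while the main bound (applicable with $l\to\infty$ because $|S|=o(|X|)$) gives $\mathrm{ex}=o(d|S|)$; hence $n_1=O(\delta)\,d|S|+o(d|S|)$ as well. But the vertices of $N(S)$ that are \emph{not} joined to exactly two vertices of $S$ are exactly the $n_1$ unique neighbours together with the at most $\mathrm{ex}$ vertices of degree $\ge 3$, so for $\delta$ small and $|X|$ large their number is at most $\epsilon|N(S)|$.
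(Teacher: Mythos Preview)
Your skeleton matches the paper's: form the bipartite incidence graph between $S$ and $N(S)$, lower-bound the non-backtracking path count by the Alon--Hoory--Linial Moore bound together with the bipartite refinement Lemma~\ref{lem:d-comparison}(2), upper-bound it by Lemma~\ref{lem:Walks bound}, and compare. The identity $|N(S)|=\tfrac12\bigl(d|S|+n_1-\mathrm{ex}\bigr)$ is correct, and your deduction of the ``moreover'' clause from the main bound is fine.

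The gap is your third paragraph. You correctly flag that passing to the $2$-core of $B$ may delete a large portion of the graph, and then you replace the argument by a plan: a subcritical-branching estimate when $n_1=o(|S|)$, and an unspecified separate treatment when $n_1$ is a positive fraction of $|S|$, which you yourself call ``the technical heart of the proof''. Neither piece is actually carried out; the branching heuristic is not obviously valid beyond the first round (after the first wave a removed left vertex has at most one surviving right neighbour, but that right neighbour may have degree $2$ and hence be removed immediately, so the ratio is not clearly $\le 1/(d-1)$), and there is no indication how the two regimes glue into a single $O(\ln(l+2)/l)$ bound. The paper avoids this entirely by a monotonicity reduction in place of a $2$-core computation: write $N(S)=M\cup M'$ with $M'$ the unique neighbours; if some $x\in S$ has at most one neighbour in $M$, then $x$ has at least $d-1$ neighbours in $M'$, all of which leave $N(S)$ when $x$ is deleted, so deleting $x$ drops $|N(S)|$ by at least $d-1$ and $|S|$ by $1$, and hence (assuming $|N(S)|/|S|<d-1$, else there is nothing to prove) the ratio $|N(S)|/|S|$ does not increase. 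Iterate. At the end the bipartite graph on $(S,M)$ has minimum degree $\ge 2$ on both sides, Theorem~\ref{thm:Moore bound} applies directly, and the rest is elementary algebra in $e=|E(S,M)|$, $m=|M|$, $s=|S|$, with no case split and with an implied constant independent of $d$.
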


\begin{proof}
We assume that $X$ is bipartite and $S\subset X$ is contained in
one of the sides. See the proof of Theorem 2 in \cite{kahale1995eigenvalues}
for this simple reduction.

Decompose $N\left(S\right)=M\cup M'$, where $M$ are vertices that
are connected to two or more vertices in $S$ and $M'$ are vertices
that are connected to exactly one vertex in $S$. We may assume that
every vertex in $S$ is connected to at least $2$ vertices in $M$.
Otherwise, assuming the ratio $\left|N\left(S\right)\right|/\left|S\right|$
is smaller than $d-1$, when we remove a vertex that is connected
to one or zero vertices in $M$, we decrease $S$ by $1$ and decrease $N(S)$ by at least $d-1$, so we decrease the ratio $\left|N\left(S\right)\right|/\left|S\right|$.

Consider the bipartite graph $Y$ on $\left(S,M\right)$, where $S$
is on the left side and $M$ is on the right side. Let $m=\left|M\right|$,
$m'=\left|M'\right|$, $s=\left|S\right|$. Let $e$ be the number
of directed edges from $S$ to $M$ (notice that it is half of the
edges in $Y$, which contain edges from $M$ to $S$ as well).

It holds that
\begin{align*}
m' & =ds-e\\
\left|N\left(S\right)\right| & =m+m'\\
 & =m+ds-e.
\end{align*}

The average left degree of $Y$ is $\overline{d}_{L}=\frac{e}{s}$
and the average right degree is $\overline{d}_{R}=\frac{e}{m}$. Write
$d'=\sqrt{\left(\overline{d}_{L}-1\right)\left(\overline{d}_{R}-1\right)}+1$.

By Theorem~\ref{thm:Moore bound} and Lemma~\ref{lem:d-comparison}, \[
M_{l}(Y)\ge |Y|\left(d'-1\right)^{l-1}.
\]

However, by Lemma~\ref{lem:Walks bound},
\[
M_{l}(Y)\le |Y|(l+3)\left(d-1\right)^{l/2}..
\]

Denote $\epsilon=\ln\left(l+3\right)/l$. Then we get, as before,
\[
d'-1\le\sqrt{d-1}\left(1+O\left(\epsilon\right)\right)
\]

Therefore
\[
\sqrt{\frac{e}{s}-1}\sqrt{\frac{e}{m}-1}\le\sqrt{d-1}\left(1+O\left(\epsilon\right)\right)
\]

Simplifying,

\begin{align*}
\frac{e}{m} & \le1+\frac{\left(1+O\left(\epsilon\right)\right)\left(d-1\right)s}{e-s}=\frac{e+\left(d-2\right)s}{e-s}\left(1+O\left(\epsilon\right)\right)\\
m & \ge\frac{e\left(e-s\right)}{e+\left(d-2\right)s}\left(1-O\left(\epsilon\right)\right)\\
m-e & \ge-\frac{e\left(d-1\right)s}{e+\left(d-2\right)s}\left(1+O\left(\epsilon\right)\right).
\end{align*}

Since $e\le ds$, we get
\[
m-e\ge-\frac{d}{2}s\left(1+O\left(\epsilon\right)\right).
\]

and since $\left|N\left(S\right)\right|=m-e+ds$, we deduce
\[
\left|N\left(S\right)\right|\ge\frac{d}{2}s\left(1-O\left(\epsilon\right)\right).
\]

The proof also says that if $e\le\alpha ds$ for some fixed $\alpha<1$,
then
\[
|N\left(S\right)|\ge\beta\frac{d}{2}s\left(1-O(\epsilon)\right)
\]
for some $\beta>1$ depending on $\alpha$. Therefore, if we assume
that $|S|=o(|N|)$ and $\left|N\left(S\right)\right|\le\frac{d}{2}s\left(1+o\left(1\right)\right)$, then $e\ge ds\left(1-o\left(1\right)\right)$. Therefore, all but $o\left(s\right)$ of the vertices of $N\left(S\right)$ are connected to at least $2$ vertices of $S$, and by the bound on the size of $N\left(S\right)$, all but $o\left(s\right)$ of the vertices of $N\left(S\right)$ are
connected to exactly $2$ vertices of $S$.
\end{proof}
\begin{rem}
The proofs of Kahale give slightly better bounds for both edge and
vertex expansions, where $O\left(\ln\left(l+3\right)/l\right)$ is
replaced by $O\left(1/l\right)$.
\end{rem}

\bibliographystyle{acm}
\bibliography{./database}

\end{document}